\DeclareMathOperator{\dival}{\mathrm{DivVal}}
\DeclareMathOperator{\Vol}{\mathrm{Vol}}
\DeclareMathOperator{\Val}{\mathrm{Val}}
\DeclareMathOperator{\aut}{\mathrm{Aut}}
\DeclareMathOperator{\XX}{\mathcal{X}}
\DeclareMathOperator{\LL}{\mathcal{L}}
\DeclareMathOperator{\II}{\mathscr{I}}
\DeclareMathOperator{\OO}{\mathscr{O}}
\DeclareMathOperator{\PP}{\mathbb{P}}
\DeclareMathOperator{\ord}{\mathrm{ord}}
\DeclareMathOperator{\QQ}{\mathbb{Q}}
\DeclareMathOperator{\Qlin}{\sim_{\mathbb{Q}}}
\DeclareMathOperator{\can}{\mathrm{can}}
\DeclareMathOperator{\Pic}{\mathrm{Pic}}
\DeclareMathOperator{\lct}{\mathrm{lct}}
\begin{document} 

\title{Delta-invariants for Fano varieties with large automorphism groups}
\date{}
\author{Aleksei Golota}
\thanks{The author is partially supported by Laboratory of Mirror Symmetry NRU HSE, RF Government grant, ag. 14.641.31.0001}

\theoremstyle{definition}

\newtheorem{thm}{Theorem}[section]
\newtheorem{defi}[thm]{Definition}
\newtheorem{prop}[thm]{Proposition}
\newtheorem{rmk}[thm]{Remark}
\newtheorem{cor}[thm]{Corollary}
\newtheorem{exa}[thm]{Example}

\begin{abstract} For a variety $X$, a big $\mathbb{Q}$-divisor $L$ and a closed connected subgroup $G \subset \aut(X, L)$ we define a $G$-invariant version of the $\delta$-threshold. We prove that for a Fano variety $(X, -K_X)$ and a connected subgroup $G \subset \aut(X)$ this invariant characterizes $G$-equivariant uniform $K$-stability. We also use this invariant to investigate $G$-equivariant $K$-stability of some Fano varieties with large groups of symmetries, including spherical Fano varieties. We also consider the case of $G$ being a finite group.
\end{abstract}

\maketitle

\section{Introduction} The problem of constructing K\"ahler--Einstein metrics on Fano varieties (over the field $\mathbb{C}$ of complex numbers) has been extensively studied in recent years. In particular, for smooth Fano varieties (Fano manifolds) the existence of K\"ahler--Einstein metrics was shown by Chen, Donaldson and Sun \cite{CDS15} and Tian \cite{Ti15} to be equivalent to an algebro-geometric condition of {\em $K$-polystability}. Another approach to this problem is the variational one, developed in \cite{BBGZ13, Ber16, BBEGZ19}. For a Fano variety with finite automorphism group the existence of a K\"ahler--Einstein metric is equivalent to a stronger property of {\em uniform $K$-stability}. This was shown in \cite{BBJ18} for $X$ smooth and in \cite{Ber16, LTW19} for a Fano variety with klt singularities.

In view of the above results, it is important to be able to check if a given Fano variety $X$ is $K$-polystable or uniformly $K$-stable. A priori this requires computing certain numerical invariants for all polarized one-parameter degenerations of $X$ (see Definition \ref{defi:Kstab} below for the precise definition of $K$-stability). Ideally, one would like to have a numerical invariant, depending on the variety $X$ and an ample (or, more generally, big) polarization $L$ only, such that the $K$-stability of $(X, L)$ is detected by this invariant. The first example of such invariant was the $\alpha$-invariant (or its version $\alpha_G(X)$ for a compact group $G$ of symmetries of $X$) introduced by Tian \cite[p. 229]{Ti87} via analytic methods. Tian gave a sufficient condition for the existence of a K\"ahler--Einstein metric on a Fano manifold in terms of $\alpha_G(X)$. 

\begin{thm}[{\cite[Theorems 2.1 and 4.1]{Ti87}}] \label{thm:Tian} Let $X$ be a Fano manifold of dimension $n$ and $G \subset \aut(X)$ a compact subgroup. If $\alpha_G(X) > \frac{n}{n+1}$ then $X$ admits a K\"ahler--Einstein metric.
\end{thm}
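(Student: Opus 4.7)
The plan is to produce a K\"ahler--Einstein metric by running the continuity method in the category of $G$-invariant objects. Using compactness of $G$, fix a $G$-invariant K\"ahler form $\omega_0 \in c_1(X)$ and a $G$-invariant Ricci potential $h_0$ satisfying $\mathrm{Ric}(\omega_0) - \omega_0 = dd^c h_0$, normalized by $\int_X(e^{h_0} - 1)\omega_0^n = 0$. Consider the one-parameter family of complex Monge--Amp\`ere equations
\[
(\omega_0 + dd^c \varphi_t)^n = e^{h_0 - t\varphi_t}\,\omega_0^n, \qquad t \in [0,1],
\]
where the unknown $\varphi_t$ is a smooth $G$-invariant function with $\omega_0 + dd^c \varphi_t > 0$. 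A solution at $t = 1$ produces a $G$-invariant K\"ahler--Einstein metric.

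Set $S = \{t \in [0,1] : \text{a $G$-invariant solution exists}\}$. The seed $0 \in S$ is Yau's solution of the Calabi conjecture, and openness of $S \cap [0,1)$ is given by the implicit function theorem on the Banach space of $G$-invariant $C^{2,\beta}$ potentials: the linearization at $\varphi_t$ is $-\Delta_{\omega_t} - t$, which has trivial kernel for $t \in [0, 1)$. The core task is closedness of $S$. By the standard regularity theory for the complex Monge--Amp\`ere equation (Yau's $C^2$ and $C^3$ estimates, Evans--Krylov, Schauder bootstrap), closedness reduces to a uniform $C^0$ bound on the normalized potentials $\varphi_t$ with $\sup_X \varphi_t = 0$, equivalently a uniform lower bound on $\inf_X \varphi_t$.

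Here the hypothesis $\alpha_G(X) > n/(n+1)$ enters. By the definition of the $G$-invariant $\alpha$-invariant, for each $\alpha < \alpha_G(X)$ there is a constant $C(\alpha)$ such that
\[
\int_X e^{-\alpha \varphi}\,\omega_0^n \leq C(\alpha)
\]
for every $G$-invariant potential $\varphi$ with $\omega_0 + dd^c \varphi > 0$ and $\sup_X \varphi = 0$. Pick any $\alpha \in (n/(n+1),\,\alpha_G(X))$. Applying this bound to $\varphi_t$ and running a Moser iteration against the Monge--Amp\`ere equation produces the desired $L^\infty$ estimate: the iteration yields a geometric series whose ratio involves $\tfrac{n}{(n+1)\alpha}$, and that series is summable precisely when $\alpha > n/(n+1)$.

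The main obstacle is this last Moser iteration step, where the sharp threshold $n/(n+1)$ genuinely appears; all other ingredients (openness of the continuity path, higher-order a priori estimates, and the averaging keeping everything $G$-invariant) are standard. Because the reference data are $G$-invariant and the Monge--Amp\`ere operator is $G$-equivariant, the whole continuity method runs on the closed subspace of $G$-invariant potentials, so the resulting K\"ahler--Einstein metric is automatically $G$-invariant.
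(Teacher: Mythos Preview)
The paper does not give its own proof of this theorem: it is quoted verbatim from \cite{Ti87} as a background result and used only to motivate the subsequent definitions. So there is nothing in the paper to compare your argument against.

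That said, your sketch is a faithful outline of Tian's original proof in \cite{Ti87}: run the Aubin--Yau continuity method on the space of $G$-invariant potentials, obtain openness from the implicit function theorem and the Bochner eigenvalue bound, reduce closedness to a uniform $C^0$ estimate via Yau's higher-order estimates, and extract that $C^0$ bound from the uniform integrability $\int_X e^{-\alpha\varphi}\omega_0^n \leq C(\alpha)$ supplied by $\alpha_G(X) > \alpha > n/(n+1)$. The one place where your summary compresses a genuinely delicate computation is the ``Moser iteration'' paragraph: Tian's actual derivation of $\|\varphi_t\|_{C^0}$ from the exponential integral bound is not a pure Moser scheme but combines concavity of $\log\det$, the equation itself, and a careful weight choice; the threshold $n/(n+1)$ emerges from balancing the two terms in the resulting inequality rather than from summability of a geometric series. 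This does not make your outline wrong, but a reader who tried to fill in that step from your description alone would not recover the argument without consulting \cite{Ti87}.
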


In early 2000s it became evident to experts that Tian's $\alpha$-invariant coincides with the global log canonical threshold of $X$ (see \cite{C08} and \cite[Appendix A]{CS08}). An algebraic counterpart of Tian's result was given by Odaka and Sano \cite[Theorem 1.4]{OS12}. They have shown by purely algebraic methods that a klt Fano variety $X$ satisfying $\alpha(X) > n/(n+1)$ is $K$-stable. Moreover, for a Fano variety $X$ and a closed subgroup $G \subset \aut(X)$ Odaka and Sano proved \cite[Theorem 1.10]{OS12} that the condition $\alpha_G(X) > n/(n + 1)$ implies $K$-stability of $X$ along $G$-equivariant degenerations (so-called $G$-equivariant $K$-stability, see Definition~ \ref{defi:Kstab}).

Automorphism groups have been successfully used to establish the existence (or nonexistence) of K\"ahler--Einstein metrics for many particular examples of Fano varieties. To list a few examples, we should mention the obstructions for the existence of such metrics \cite{Mat57, Fut83}. Also, for smooth del Pezzo surfaces the necessary and sufficient condition for being K\"ahler--Einstein is reductivity of $\aut(X)$ \cite{Ti90}. For toric Fano varieties $K$-stability was studied in \cite{WZ04, Don02, Don08}. The case of varieties with torus action of complexity one was considered in \cite{Su13, IS17}. In fact, by \cite[Theorem 1.4]{LWX18} torus-equivariant $K$-polystability of a Fano variety $X$ with a torus action is equivalent to $K$-polystability of $X$. Tian's result was applied in \cite{Nad90, C08, CS09} to prove the K\"ahler--Einstein property for certain Fano threefolds, including those of types $V_1, V_5$ (see e.g. \cite[\S 12]{IP99} for the classification of Fano threefolds). Equivariant $K$-polystability of Fano threefolds of type $V_{22}$ was studied in \cite[Section 5]{Don08},  \cite[Appendix ~A]{CS12} and in \cite{CS18}. For a criterion of equivariant $K$-stability of spherical Fano varieties see \cite{Del16}. Also an extremely important general result was proved by Datar and Sz\'ekelyhidi.

\begin{thm}[{\cite[Theorem 1]{DS16}}] \label{thm:DS} Let $X$ be a Fano manifold and $G \subset \aut(X)$ a reductive subgroup. If $X$ is $K$-polystable with respect to $G$-equivariant test configurations then $X$ is K\"ahler--Einstein.
\end{thm}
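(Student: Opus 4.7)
The plan is to argue by contradiction using a $G$-equivariant version of Aubin's continuity method, in the spirit of the Chen--Donaldson--Sun programme. Suppose $X$ is not K\"ahler--Einstein. Pick a reference form $\alpha \in c_1(X)$ invariant under a maximal compact $K \subset G$ and consider the continuity equation
\begin{equation*}
\mathrm{Ric}(\omega_t) \;=\; t \omega_t + (1-t)\alpha, \qquad t \in [0,1).
\end{equation*}
Let $R(X) \in (0,1]$ be the supremum of $t$ for which a smooth $K$-invariant solution $\omega_t$ exists. Since $X$ is assumed not K\"ahler--Einstein, one may arrange $R(X) < 1$.

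The next step is to analyse the failure of the continuity method at $t = R(X)$. For a sequence $t_i \nearrow R(X)$ I would extract a Gromov--Hausdorff limit $(X_\infty, \omega_\infty)$ of the family $(X, \omega_{t_i})$. Using the partial $C^0$-estimate and the compactness and regularity theory of Donaldson--Sun, the limit $X_\infty$ is a $\QQ$-Fano variety with klt singularities carrying a singular twisted K\"ahler--Einstein metric; moreover, it appears as the central fibre of a nontrivial special test configuration $(\XX, \LL)$ for $(X, -K_X)$. A direct computation using the twisted continuity equation (or an argument via the Perelman entropy) shows that the Donaldson--Futaki invariant of $(\XX, \LL)$ is nonpositive, with vanishing only if $X \simeq X_\infty$.

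The main obstacle, and the core of Datar--Sz\'ekelyhidi's contribution, is to promote $(\XX, \LL)$ to a $G$-equivariant test configuration. The $K$-invariance of the approximate solutions $\omega_{t_i}$ induces a $K$-action on the Gromov--Hausdorff limit; the delicate point is to extend this to a holomorphic action of $G \simeq K^{\mathbb{C}}$ on the total space $\XX$, commuting with the defining $\mathbb{C}^\ast$-action. I would do this by revisiting Donaldson--Sun's embedding of $X_\infty$ via a basis of pluri-anticanonical sections that diagonalises the $K$-action, and then showing that the $K$-action on the ambient Hilbert scheme extends holomorphically along the one-parameter family realising $(\XX, \LL)$. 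The hardest technical ingredient here is ruling out the possibility that the symmetries of the limit degenerate or fail to lift, which relies on the uniqueness of tangent cones and on semicontinuity properties of automorphism groups under Gromov--Hausdorff convergence.

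Once $G$-equivariance is in hand, the hypothesis of $G$-equivariant $K$-polystability forces the Donaldson--Futaki invariant of $(\XX, \LL)$ to be nonnegative, with equality only for product configurations arising from a one-parameter subgroup of $\aut(X)$. Combined with the nonpositivity from the previous paragraph, this gives $X_\infty \simeq X$, and the induced one-parameter subgroup in $\aut(X)$ can be used to extend the continuity path past $R(X)$, contradicting the maximality of $R(X)$. Hence $X$ admits a K\"ahler--Einstein metric.
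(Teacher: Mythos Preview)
The paper does not contain a proof of this statement: Theorem~\ref{thm:DS} is quoted from \cite{DS16} and used as a black box throughout, with no argument supplied. There is therefore nothing in the paper to compare your proposal against.

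That said, your sketch is a reasonable outline of the strategy in \cite{DS16}. A couple of points worth tightening if you intend to flesh it out. First, Datar and Sz\'ekelyhidi work along Aubin's smooth continuity path (as you wrote), but the possibility $R(X)=1$ with no K\"ahler--Einstein metric must also be handled; you cannot simply ``arrange $R(X)<1$''. Second, the claim that the limit test configuration can be made $G$-equivariant is indeed the heart of the matter, but your description of how this is done is vague: the actual argument in \cite{DS16} goes through a careful analysis of the Luna slice for the $G$-action on the relevant Hilbert scheme, combined with equivariant versions of the Donaldson--Sun estimates, rather than a direct lifting argument as you suggest. Finally, the last step---using the product structure to ``extend the continuity path past $R(X)$''---requires an additional argument (a $G$-equivariant implicit function theorem or a twisted energy argument) that you have not indicated.
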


These results show the significance of equivariant $K$-stability for the study of Fano varieties.

Another invariant, the so-called $\delta$-invariant, was defined for Fano varieties by Fujita and Odaka \cite[Definition 0.2]{FO18} using log canonical thresholds of basis-type divisors. The inequality $\delta(X) > 1$ implies uniform $K$-stability of $X$ by \cite[Theorem 0.3]{FO18} and in fact turns out to be equivalent to it (see \cite[Theorem B]{BlJ20}). 

Since uniform $K$-stability forces the automorphism group of $X$ to be finite \cite[Theorem 5.4]{BBEGZ19}, we cannot use $\delta$-invariant to study K\"ahler--Einstein property of Fano varieties with $\aut(X)$ infinite. However, if we restrict to $G$-equivariant degenerations for a suitable subgroup $G \subset \aut(X)$ (which is sufficient by Theorem \ref{thm:DS}), then in some examples (see e. g. Example \ref{exa:pgl}) uniform $K$-stability holds. Therefore it is reasonable to expect that there is a version of $\delta$-invariant for a variety $X$ with a reductive subgroup $G \subset \aut(X)$. 

The main goal of the present paper is to define the $\delta_G$-invariant for a pair $(X, L)$ where $X$ is a projective variety, $L$ is a big $\mathbb{Q}$-divisor and $G \subset \aut(X, L)$ is a closed connected subgroup. To do so, we follow the approach to $\delta$-invariants and $K$-stability via valuations on the field of rational functions on $X$, developed in \cite{Fuj19a, Fuj19b, BlJ20, BHJ17}. Note also that an alternative valuative criterion for $K$-semistability was proved in \cite{Li18}. In Section 2 we give the necessary definitions. The main object we consider is the space $\dival^G_X$ of $G$-invariant divisorial valuations on $X$. Up to a multiplicative constant, every such valuation $v$ is given by the order of vanishing at the generic point of a $G$-stable prime divisor $E$ on a birational model $\varphi \colon Y \to X$. We consider the log discrepancy $A_X(v) = 1 + \ord_E(K_{Y/X})$ and the expected vanishing order $$S_L(v) = \frac{1}{\Vol(L)}\int_0^{\infty}\Vol(\varphi^*L - tE)dt$$ of the valuation. Being inspired by \cite[Theorem 4.4]{BlJ20}, we define (see Definition \ref{defi:MainDefi}): $$\delta_G(X, L) = \inf_{v \in \dival^G_X}\frac{A_X(v)}{S_L(v)}.$$ Also in Section 2 we study basic properties of this invariant and compare it to $\alpha_G$-invariant of Tian. In Section $3$ we discuss equivariant $K$-stability and prove our main result.

\begin{thm}[see Theorem \ref{thm:Main} below] \label{thm:Main1} Let $(X, -K_X)$ be a klt $\mathbb{Q}$-Fano variety with the anticanonical polarization. Let $G \subset \aut(X)$ be a closed connected subgroup. Then $(X, -K_X)$ is uniformly $K$-stable (resp. $K$-semistable) with respect to $G$-equivariant degenerations if and only if the $\delta_G$-invariant of $(X, -K_X)$ is greater than one (resp. greater or equal to one). 
\end{thm}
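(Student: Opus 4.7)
The plan is to adapt the Blum--Jonsson proof of \cite[Theorem B]{BlJ20} to the $G$-equivariant setting, systematically replacing arbitrary divisorial valuations by $G$-invariant ones and arbitrary test configurations by $G$-equivariant test configurations. The strategy rests on two bridges: the valuative bridge $v \mapsto (A_X(v), S_L(v))$ relating $\delta_G$ to log discrepancies and expected vanishing, and the test-configuration bridge assigning to a one-parameter degeneration an appropriate Donaldson--Futaki-type invariant. I would prove the two directions separately: (a) if $\delta_G(X,-K_X) \geq 1$ (resp.\ $>1$) then $X$ is $G$-equivariantly $K$-semistable (resp.\ uniformly $K$-stable), and (b) conversely.

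For direction (a), I would start with an arbitrary $G$-equivariant normal test configuration $(\XX, \LL)$ of $(X, -K_X)$. Using the equivariant analogue of the Li--Xu decomposition (the $G$-equivariant MMP with scaling works because $G$ is connected, so each step preserves the $G$-action and equivariant Kollár components exist), I would reduce to the case of a $G$-equivariant special test configuration. Such a special degeneration corresponds to a $G$-invariant divisorial valuation $v$ (the order of vanishing along the $G$-stable central fiber pulled back to $X$ via the Rees construction), and the standard computation of Fujita/Li identifies the generalized Futaki invariant with $A_X(v) - S_L(v)$ up to a positive factor, while the minimum norm $\|(\XX,\LL)\|_m$ is bounded below by a positive multiple of $S_L(v)$. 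The assumption $\delta_G \geq 1$ then yields non-negativity of the Futaki invariant for every such equivariant test configuration, and $\delta_G > 1$ yields the uniform inequality $\mathrm{DF}(\XX,\LL) \geq \varepsilon \cdot \|(\XX,\LL)\|_m$ with $\varepsilon = 1 - 1/\delta_G$.

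For direction (b), the input is $G$-equivariant $K$-semistability (resp.\ uniform $K$-stability), and I need to produce the lower bound $\delta_G \geq 1$ (resp.\ $>1$). Given any $G$-invariant divisorial valuation $v$ computed on some $G$-equivariant birational model $\varphi \colon Y \to X$, I would construct a $G$-equivariant test configuration whose associated valuation is a positive multiple of $v$. Concretely, one takes the $G$-equivariant deformation to the normal cone of the $G$-stable divisor $E$ (or, when $A_X(v)$ is not rational, an appropriate limit via $G$-invariant filtrations $\mathcal{F}^t R_m := \{s \in H^0(X, -mK_X) \mid v(s) \geq t\}$ and a Rees-type degeneration à la Witt Nyström). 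Passing to a special model via equivariant MMP, the Futaki invariant of the resulting equivariant test configuration is again proportional to $A_X(v) - S_L(v)$. Equivariant $K$-semistability (resp.\ uniform $K$-stability) therefore forces $A_X(v)/S_L(v) \geq 1$ (resp.\ $\geq 1 + \varepsilon$ for a uniform $\varepsilon > 0$, using that the minimum norm is comparable to $S_L(v)$), and taking the infimum over $v \in \dival^G_X$ gives the desired bound on $\delta_G$.

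The main obstacle is the $G$-equivariance of the constructions that, in the non-equivariant case, are assembled from the MMP and the Li--Xu special-test-configuration theorem: one must verify that equivariant resolutions, equivariant Kollár components, and the equivariant MMP with scaling exist and preserve the relevant numerical identities, and that the filtration-to-test-configuration passage can be performed $G$-equivariantly for every $v \in \dival^G_X$. Since $G$ is assumed closed and connected, $G$-equivariant resolution of singularities and $G$-equivariant MMP are available, so these steps should go through, but care is needed to ensure that the approximation by equivariant special test configurations in direction (a) and the realization of arbitrary $G$-invariant valuations in direction (b) both work without shrinking the class of test configurations under consideration.
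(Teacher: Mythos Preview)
Your overall strategy matches the paper's: for direction (a) you reduce an arbitrary $G$-equivariant test configuration to a special one via $G$-equivariant MMP (this is exactly the paper's Proposition~\ref{prop:Fujita2}, relying on connectedness of $G$ so that $\overline{\mathrm{NE}}(\bar\XX)=\overline{\mathrm{NE}}(\bar\XX)^G$), and for direction (b) you build a $G$-equivariant test configuration from a $G$-invariant valuation via its order filtration (the paper's Proposition~\ref{prop:Fujita1}, using Odaka--Fujita flag ideals, which is precisely the Rees-type construction you mention).

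There is, however, a genuine imprecision in your direction (b). You write that after constructing a test configuration from $v$ and then ``passing to a special model via equivariant MMP, the Futaki invariant of the resulting equivariant test configuration is again proportional to $A_X(v)-S_L(v)$.'' This step does not work as stated: running the MMP replaces the central fiber by a new irreducible divisor whose associated valuation $v'$ need not equal $v$, so the special Donaldson--Futaki invariant computes $A_X(v')-S_L(v')$ for the \emph{new} $v'$, not for the $v$ you started with. Li--Xu only tells you that $\mathrm{DF}$ does not increase, which goes the wrong way for this direction. The paper sidesteps this by (i) routing the argument through the Ding invariant rather than $\mathrm{DF}$, (ii) \emph{not} passing to a special model in this direction, and (iii) working with a \emph{sequence} $(\XX^r,\LL^r)$ of (non-special) flag-ideal blowups, applying the uniform Ding-stability inequality to each, and using Fujita's explicit asymptotics to extract $A_X(v)/S_{-K_X}(v)\geq 1/(1-\varepsilon)$ in the limit. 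The three-step cycle $(1)\Rightarrow(2)\Rightarrow(3)\Rightarrow(1)$, with Ding-stability as the intermediary, is what makes the bookkeeping clean; your direct $\mathrm{DF}\leftrightarrow\delta_G$ route would require either this fix or an appeal to the (non-automatic) dreaminess of the divisor computing $v$.
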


It is of course desirable to generalize this theorem to any closed subgroup $G \subset \aut(X)$ (e. g. a finite group $G$). In Section $3$ we also establish a further connection of $\delta_G(X)$ with metric invariants of $X$, see Proposition \ref{prop:Beta} below.

In Section $4$ we investigate $\delta$-invariants of varieties with an action of a torus $T = (\mathbb{G}_m)^k$. We prove that $\delta$-invariant can be computed using only $T$-invariant valuations (Proposition \ref{prop:Solv}), generalizing a result of Blum and Jonsson, who considered the case of a toric variety $X$ and a maximal torus $T$.

Section $5$ is devoted to $\delta_G$-invariants of spherical Fano varieties; we refer to this section for basic definitions and notation from the theory of spherical varieties. If $X$ is a spherical Fano variety under the action of a connected reductive group $G$, we give a formula for $\delta_G(X)$. We choose a Borel subgroup $B \subset G$ and a maximal torus $T \subset B$. This formula uses the description of $\Val^G_X$ as the cone $\mathcal{V}$ in a finite-dimensional vector space. The log discrepancy $A_X(v)$ identifies with a certain piecewise linear function $h$ on the cones $C \subset \mathbb{F}_X$ in the complete colored fan $\mathbb{F}_X$ of $X$. The function $S_{-K_X}(v)$ can be expressed, following \cite{Del16}, via the moment polytope $\Delta^+$, the Duistermaat--Heckman measure $\mathrm{DH}$ on $\Delta^+$ (see \cite[Theorem 4.5]{Del16}) and the vector $2\rho_Q$, determined by $\Delta^+$ and the root system of $(G; T)$.

\begin{prop}[see Proposition \ref{prop:Form} below] \label{prop:Form1} Let $X$ be a Fano variety which is spherical under the action of a connected reductive group $G$. Then $\delta_G$-invariant of $X$ can be expressed as follows: $$\delta_G(X) = \min_{\substack{\mathrm{ord}_{D_i} \in \mathcal{V} \cap \mathcal{C} \\ \mathcal{C} \in \mathbb{F}_X}}\frac{a_{D_i}}{a_{D_i} - V \cdot \langle2\rho_Q - \mathrm{bar}_{DH}(\Delta^+), \pi^{-1}(\mathrm{ord}_{D_i}) \rangle}.$$ Here $\mathrm{bar}_{DH}(\Delta^+)$ is the barycenter of $\Delta^+$ with respect to the Duistermaat--Heckman measure $\mathrm{DH}$ and $V$ is a constant depending on $\Delta^+$ and $\mathrm{DH}$ only. The minimum is taken over a finite set $\mathrm{ord}_{D_1}, \ldots, \mathrm{ord}_{D_N}$ of valuations corresponding to generators of one-dimensional subcones (edges) in $\mathcal{C} \cap \mathcal{V}, \mathcal{C} \in \mathbb{F}_X$.
\end{prop}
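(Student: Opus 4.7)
The strategy is to identify the $G$-invariant divisorial valuations on $X$ with rational points of the valuation cone $\mathcal{V}$, express both $A_X$ and $S_{-K_X}$ as piecewise linear functions on $\mathcal{V}$, and then reduce the infimum defining $\delta_G$ to a finite minimum over the edge generators of the cones of the colored fan $\mathbb{F}_X$. By the Luna--Vust theory of spherical embeddings, every $G$-invariant divisorial valuation on $X$ is (up to a positive scalar) of the form $\mathrm{ord}_E$ for a $G$-stable prime divisor $E$ on a $G$-equivariant birational model of $X$, and via the map $\pi$ these are identified with the rational points of $\mathcal{V}$. Moreover, since $X$ corresponds to the fan $\mathbb{F}_X$, every such valuation lies in some $\mathcal{C}\cap\mathcal{V}$ for $\mathcal{C}\in\mathbb{F}_X$, so the infimum in
\[
\delta_G(X) \;=\; \inf_{v\in\dival^G_X}\frac{A_X(v)}{S_{-K_X}(v)}
\]
ranges over $\bigcup_{\mathcal{C}\in\mathbb{F}_X}(\mathcal{C}\cap\mathcal{V})$.

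Next I would describe both functions explicitly on each cone. On each $\mathcal{C}\in\mathbb{F}_X$, the log discrepancy $A_X$ restricts to a linear function whose value on the primitive generator $\mathrm{ord}_{D_i}$ of an edge of $\mathcal{C}\cap\mathcal{V}$ is the constant $a_{D_i}$; this is the standard discrepancy formula for spherical embeddings going back to Brion. For $S_{-K_X}$ I would invoke Delcroix's integration formula \cite[Theorem 4.5]{Del16}. Unfolding
\[
S_{-K_X}(v) \;=\; \frac{1}{\Vol(-K_X)}\int_0^\infty \Vol(\varphi^*(-K_X) - tE)\,dt
\]
and using the identification of $\Vol(\varphi^*(-K_X)-tE)$ with the mass of a suitably translated moment polytope under the Duistermaat--Heckman measure $\mathrm{DH}$, one obtains
\[
S_{-K_X}(v) \;=\; A_X(v) \;-\; V\cdot\langle\, 2\rho_Q - \mathrm{bar}_{DH}(\Delta^+),\; \pi^{-1}(v)\,\rangle,
\]
where $V$ is the normalization constant depending only on $\Delta^+$ and $\mathrm{DH}$. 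In particular $S_{-K_X}$ is also linear on each $\mathcal{C}\cap\mathcal{V}$.

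Finally, since both $A_X$ and $S_{-K_X}$ are positive linear on each cone $\mathcal{C}\cap\mathcal{V}$ (positivity using the klt property of $(X,-K_X)$ and bigness of $-K_X$), the ratio $A_X/S_{-K_X}$ is homogeneous of degree zero; writing $v=\sum_i \lambda_i\,\mathrm{ord}_{D_i}$ with $\lambda_i\ge 0$ on the edges $\mathrm{ord}_{D_i}$ of $\mathcal{C}\cap\mathcal{V}$ gives
\[
\frac{A_X(v)}{S_{-K_X}(v)} \;=\; \frac{\sum_i \lambda_i\, A_X(\mathrm{ord}_{D_i})}{\sum_i \lambda_i\, S_{-K_X}(\mathrm{ord}_{D_i})},
\]
which attains its minimum over the simplex $\{\lambda_i\ge 0,\ \sum\lambda_i=1\}$ at one of its vertices, i.e., at some $\mathrm{ord}_{D_i}$. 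Taking the minimum over the finitely many cones of $\mathbb{F}_X$ produces the claimed formula. The main obstacle will be the precise derivation of $S_{-K_X}(v)$ from Delcroix's formula: one must carefully decompose $-K_X$ in terms of colors and $G$-stable divisors, normalize the map $\pi$ consistently with the embedding of $\Delta^+$ into the character lattice, and verify that the integral converges to the barycentric pairing with the stated constant $V$.
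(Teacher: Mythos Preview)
Your proposal is correct and closely parallels the paper's proof, with one genuine difference in how $S_{-K_X}(v)$ is obtained. The paper does \emph{not} compute the volume integral directly; instead it invokes Delcroix's formula for the Donaldson--Futaki invariant of the special test configuration $(\XX^v,\LL^v)$ attached to $v\in\mathcal{V}$ (Theorems~B and~C of \cite{Del16}) and combines it with the general identity $\mathrm{DF}(\XX^s,\LL^s)=A_X(v)-S_{-K_X}(v)$ for special test configurations (Proposition~\ref{prop:SpecialTC}(b)). This immediately yields
\[
S_{-K_X}(v)=A_X(v)-V\cdot\langle 2\rho_Q-\mathrm{bar}_{DH}(\Delta^+),\pi^{-1}(v)\rangle
\]
without ever unfolding the integral $\int_0^\infty \Vol(\varphi^*(-K_X)-tE)\,dt$. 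Your route---expressing $\Vol(\varphi^*(-K_X)-tE)$ as the $\mathrm{DH}$-mass of a sliced moment polytope and integrating---is conceptually more direct and the paper explicitly remarks (just after Proposition~\ref{prop:Form}) that such a direct computation ought to be possible; however, as you yourself note, carrying it out requires nontrivial bookkeeping with colors, normalizations, and the embedding of $\Delta^+$. The paper's detour through test configurations buys a shorter argument at the cost of invoking the Li--Xu/Fujita machinery. The remaining steps---identifying $\Val^G_X$ with $\mathcal{V}$, piecewise linearity of $A_X$ on the colored fan, and reduction to edge generators by the convexity argument---are essentially identical in both approaches.
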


In Section $6$ we consider the case of a variety with an action of a finite group $G$; aiming for possible generalization of \cite[Theorem 1.2]{Su13}, we give an alternative definition of $\delta_G$ using $G$-invariant divisors and prove the ramification formula. 

\begin{prop}[see Proposition \ref{prop:Ramif} below] \label{prop:Ramif1} Let $X$ be a variety with klt singularities and $-K_X$ big. Let $G \subset \aut(X)$ be a finite group. Denote by $Y = X / G$ the quotient variety and let $B = \sum_i(1 - 1/m_i)B_i$ be the branch divisor on $Y$. Then we have $$\delta_G(X) = \delta(Y, B)$$ where $\delta(Y, B)$ is the $\delta$-invariant of the klt pair $(Y, B)$.
\end{prop}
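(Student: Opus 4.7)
The plan is to use the alternative $G$-invariant divisor formulation of $\delta_G$ introduced earlier in Section~6 and to transfer the computation to the pair $(Y, B)$ through the finite Galois cover $\pi \colon X \to Y$. By the defining property of the branch divisor one has $\pi^*(K_Y + B) = K_X$, so $-K_X \sim_{\mathbb{Q}} \pi^*(-(K_Y + B))$. Pullback then gives a bijection between effective $\mathbb{Q}$-divisors $D_Y \in |-(K_Y+B)|_{\mathbb{Q}}$ on $Y$ and $G$-invariant effective $\mathbb{Q}$-divisors $D = \pi^* D_Y \in |-K_X|_{\mathbb{Q}}$ on $X$, with inverse $D_Y = \tfrac{1}{|G|}\pi_* D$; the same bijection passes to $m$-basis-type divisors via $H^0(X, -mK_X)^G \simeq H^0(Y, -m(K_Y+B))$. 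Once this is set up, the proof reduces to the log canonical threshold identity $\lct(X, D) = \lct((Y, B); D_Y)$ for each corresponding pair.

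To establish the lct identity, I would compare log discrepancies through $\pi$. Given any prime divisor $E$ on a $G$-equivariant birational model $\varphi \colon \tilde X \to X$, let $F = \pi(E)$ on the normal quotient $\tilde Y = \tilde X / G$ and let $e$ be the ramification index of $E$ over $F$. The ramification formula $K_{\tilde X} = \pi^*(K_{\tilde Y} + \tilde B)$ for the Galois cover $\tilde X \to \tilde Y$, combined with $K_X = \pi^*(K_Y + B)$, yields the pair of identities
\[
A_X(\ord_E) = e \cdot A_{(Y,B)}(\ord_F), \qquad \ord_E(\pi^* D_Y) = e \cdot \ord_F(D_Y).
\]
The factor $e$ cancels, so the inequality $A_X(\ord_E) \geq c \cdot \ord_E(D)$ is equivalent to $A_{(Y,B)}(\ord_F) \geq c \cdot \ord_F(D_Y)$. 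Since every prime divisor on a birational model of $Y$ arises as $\pi(E)$ for some prime $E$ on an equivariant model above (pass to a common $G$-equivariant resolution and quotient), the valuative criterion for the lct gives $\lct(X, D) = \lct((Y,B); D_Y)$. Taking infima on both sides under the divisor bijection yields $\delta_G(X) = \delta(Y, B)$.

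The hard part is ensuring the argument is not obstructed by non-$G$-stable primes. A priori $\lct(X, D)$ is computed over \emph{all} divisorial valuations on $X$, not only those attached to $G$-stable primes that feature in the definition of $\dival^G_X$. The point is that the ramification identity $A_X(\ord_E) = e \cdot A_{(Y,B)}(\ord_F)$, together with $\ord_E(\pi^* D_Y) = e \cdot \ord_F(D_Y)$, is valid for any prime $E$ on an equivariant model above $\tilde Y$, whether or not $E$ itself is $G$-stable; all primes in a $G$-orbit share the same $F$ and the same $e$, so the inf over primes on $X$ collapses to the inf over primes on $Y$ without loss. The bulk of the work is thus the careful bookkeeping of this ramification step and the construction of compatible $G$-equivariant models dominating both $X$ and any birational model of $Y$ on which a given test prime $F$ appears; with that in hand, the conclusion follows immediately from the bijection of divisors.
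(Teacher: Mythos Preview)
Your proposal is correct and follows essentially the same strategy as the paper: set up the bijection between $G$-invariant anticanonical $\mathbb{Q}$-divisors on $X$ and $\mathbb{Q}$-divisors $D_Y \sim_{\mathbb{Q}} -(K_Y+B)$ on $Y$ via pullback, check that this bijection respects basis-type divisors through the identification of $G$-invariant sections on $X$ with sections on $Y$, and then reduce to the identity $\lct(X,\pi^*D_Y)=\lct((Y,B);D_Y)$. The only difference is that the paper does not prove this last identity by hand; it simply invokes \cite[Proposition~3.16]{Kol97} (a klt pair is klt if and only if its crepant pullback under a finite cover is), whereas you re-derive it via the ramification relation $A_X(\ord_E)=e\cdot A_{(Y,B)}(\ord_F)$ and $\ord_E(\pi^*D_Y)=e\cdot\ord_F(D_Y)$. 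Your discussion of non-$G$-stable primes is a correct unpacking of exactly what Koll\'ar's statement encodes, so the two arguments are the same in substance.
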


We expect that there is a unified definition of $\delta_G$ for any closed subgroup $G \subset \aut(X, L)$. We also hope for a generalization of Theorem \ref{thm:Main} to the case of a Fano variety $X$ with an arbitrary closed subgroup $G \subset \aut(X)$.

\textbf{Acknowledgement.} We would like to thank our advisor Constantin Shramov for his patience and support. We thank Harold Blum for careful reading of a preliminary version of this text and pointing out numerous inaccuracies. We thank Kento Fujita, Thibaut Delcroix and Yuji Odaka for many helpful remarks; we also thank the referee for careful reading of the paper. The author is partially supported by Laboratory of Mirror Symmetry NRU HSE, RF Government grant, ag. 14.641.31.0001.

\bigskip

\textbf{Note added to the updated version} Soon after the first version of this paper had appeared on arXiv, we have been informed that Ziwen Zhu \cite{Zhu19} gave an independent proof of a $G$-equivariant valuative criterion for $G \subset \aut(X)$ arbitrary (not nesessarily connected). He replaced $\dival^G_X$ by the space of finite-orbit divisorial valuations (invariant under the connected component $G^0$) and the functions $\beta_X(v)$ and $S_{-K_X}(v)$ by their $G$-analogues. Using this criterion, Yuchen Liu and Ziwen Zhu defined in \cite{LZ20} the invariant $\delta_G$ and generalized our Theorem \ref{thm:Main} for any algebraic subgroup $G \subset \aut(X)$.

Moreover, at the same time Chi Li \cite{Li19} has given a criterion for $G$-uniform $K$-stability of a Fano variety $(X, -K_X)$ for $G \subset \aut(X)$ connected. The notion of $G$-uniform $K$-stability was introduced by Hisamoto \cite{His16}. It is weaker than the notion of $G$-equivariant uniform $K$-stability since it replaces the usual norm $||(\XX, \LL)||$ on test configurations by the norm $||(\XX, \LL)||_{\mathbb{T}}$ orthogonal to the identity component of the center $\mathbb{T} = C(G)^0 \subset \aut^0(X)$. He also proved that for $G = \aut^0(X)$ or $G = T \subset \aut(X)$ a maximal torus this condition is equivalent to existence of a K\"ahler--Einstein metric on $X$.

\bigskip

\section{General definitions} 

\subsection{Notation and conventions} We work over the field $\mathbb{C}$ of complex numbers. A $\mathbb{Q}$-Fano variety is a projective variety such that $K_X$ is an ample $\mathbb{Q}$-Cartier divisor. We restrict ourselves to $\mathbb{Q}$-Fano varieties (or pairs) with Kawamata log terminal (klt) singularities. For all basic information regarding singularities we refer to \cite{Kol97}. A $\mathbb{Q}$-line bundle $L$ is a reflexive sheaf of rank $1$ such that some tensor power of $L$ is locally free.

In this section we recall the definitions of  $\alpha$ and $\delta$-invariants via log canonical thresholds and valuations. Then we study the space of $G$-invariant valuations and define $\delta_G$.

\subsection{Log canonical thresholds}

\begin{defi} Let $X$ be a normal projective variety and let $\Delta$ be an effective $\QQ$-divisor on $X$ such that $K_X + \Delta$ is $\QQ$-Cartier and the pair $(X, \Delta)$ has klt singularities. For an effective $\QQ$-Cartier $\QQ$-divisor $D$ on $X$ we define the {\em log canonical threshold} of $D$ with respect to $(X, \Delta)$ by the formula $$\lct(X, \Delta; D) = \sup\{t \in \mathbb{R} \mid \mbox{$(X, \Delta + tD)$ is log canonical}\}.$$ 
\end{defi}

\begin{defi} \label{defi:Divisors} Let $(X, \Delta)$ be a klt log Fano pair. We define the {\em $\alpha$-invariant} of $(X, \Delta)$ by $$\alpha(X, \Delta) = \inf\{\lct(X, \Delta; D) \mid D \Qlin -(K_X + \Delta) \mbox{ and $D$ is an effective $\mathbb{Q}$-divisor} \}.$$ We also define the {\em $\delta$-invariant} of the pair $(X, \Delta)$ as follows. For every $m \in \mathbb{N}$ such that $m(K_X + \Delta)$ is a Cartier divisor we look at the space $H^0(X, -m(K_X + \Delta))$; put $N_m = h^0(X, -m(K_X + \Delta))$. For every basis $(s_1, \ldots, s_{N_m})$ of $H^0(X, -m(K_X + \Delta))$ we denote $D(s_1), \ldots, D(s_{N_m})$ the corresponding divisors. We call effective $\QQ$-divisors $\mathbb{Q}$-linearly equivalent to $-K_X$ which have the form $$D = \frac{1}{mN_m}(D(s_1) + \ldots + D(s_{N_m}))$$ (anticanonical) $\QQ$-divisors {\em of $m$-basis type} on $(X, \Delta)$. We define for $m \in \mathbb{N}$ the invariant $\delta_m(X, \Delta)$ by $$\delta_m(X, \Delta) = \inf\{\lct(X, \Delta; D) \mid \mbox{$D$ is of $m$-basis type}\}$$ and finally we define the $\delta$-invariant by $$\delta(X, \Delta) = \limsup_{m \in \mathbb{N}}\delta_m(X, \Delta).$$ If $\Delta = 0$ then we simply write $\alpha(X)$ for $\alpha(X, 0)$ and analogously for $\delta(X)$.
\end{defi}

\subsection{The space of valuations} In this subsection we recollect some basic information about the space of valuations on the function field of a variety. We refer to \cite{JM12, BdFFU15, BlJ20} for more details. In this subsection $X$ is a normal and $\QQ$-Gorenstein projective variety over $\mathbb{C}$.

\begin{defi}\label{defi:Val} A valuation on $X$ is a real valuation $v \colon \mathbb{C}(X)^* \to \mathbb{R}$ on the function field of $X$ which is trivial on $\mathbb{C}$. We denote by $\Val_X$ the set of all nontrivial valuations on $X$. The latter is endowed with the topology of pointwise convergence, i. e. the weakest topology with the property that all evaluation maps $\mathrm{ev}_f \colon \mathrm{Val}_X \to \mathbb{R}, v \mapsto v(f)$ are continuous for every $f \in \mathbb{C}(X)^*$.
\end{defi}

\begin{defi}\label{defi:Dival} A valuation on $X$ is called divisorial if it has the form $v = c\cdot \ord_E(\cdot)$ where $E$ is a prime divisor on a birational model of $X$ and $c \in \mathbb{R}$. The set of divisorial valuations on $X$ is denoted by $\dival_X$.
\end{defi}

\begin{prop}\label{prop:Dense} The set of divisorial valuations is dense in the space of all valuations on $\mathbb{C}(X)$ in the topology of pointwise convergence.
\end{prop}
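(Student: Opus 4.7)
The topology on $\mathrm{Val}_X$ is generated by the basic open sets
$$U(f_1, \ldots, f_k; \varepsilon) = \{\, v \in \mathrm{Val}_X : |v(f_i) - v_0(f_i)| < \varepsilon \text{ for all } i \,\},$$
so it suffices to show that for every $v_0 \in \mathrm{Val}_X$, every finite collection $f_1, \ldots, f_k \in \mathbb{C}(X)^*$, and every $\varepsilon > 0$, we can produce a divisorial valuation $v \in U(f_1, \ldots, f_k; \varepsilon)$. Because $X$ is projective, $v_0$ has a nonempty scheme-theoretic center $\xi \in X$; writing each $f_i = g_i / h_i$ with $g_i, h_i \in \mathcal{O}_{X, \xi}$ and using that any valuation is a homomorphism from $\mathbb{C}(X)^*$ to $\mathbb{R}$, I reduce at once to the case where $f_i \in \mathcal{O}_{X,\xi}$ for all $i$.

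The plan is then to construct an explicit sequence of divisorial valuations $\{v_m\}$ converging pointwise to $v_0$ via the blow-ups of the valuation ideals of $v_0$. For each $m \in \mathbb{N}$ put
$$\mathfrak{a}_m := \{\, f \in \mathcal{O}_{X,\xi} : v_0(f) \geq m \,\},$$
which I extend to a coherent sheaf on a neighborhood of $\xi$. Let $\pi_m \colon Y_m \to X$ denote the normalized blow-up of $\mathfrak{a}_m$, so that $\mathfrak{a}_m \cdot \mathcal{O}_{Y_m} = \mathcal{O}_{Y_m}(-F_m)$ for an effective Cartier divisor $F_m$. The center of $v_0$ on $Y_m$ sits inside some irreducible component $E_m$ of $F_m$; set
$$v_m := \frac{m}{\mathrm{ord}_{E_m}(F_m)} \cdot \mathrm{ord}_{E_m},$$
which is divisorial by construction and centered at $\xi$.

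It remains to show $v_m(f) \to v_0(f)$ for each fixed $f \in \mathcal{O}_{X,\xi}$. The easy inequality $v_m(f) \leq v_0(f)$ follows from the definition of $\mathfrak{a}_m$ applied to large powers of $f$: if $v_0(f^p) \geq m$ then $f^p \in \mathfrak{a}_m$, forcing $\mathrm{ord}_{E_m}(f^p) \geq \mathrm{ord}_{E_m}(F_m)$ and hence $v_m(f) \leq v_0(f) + O(1/p)$.

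The harder direction is a quantitative estimate of the form $v_0(f) - v_m(f) \leq C(f)/m$, and this is where the main difficulty lies. It rests on Izumi's inequality, which guarantees that any two valuations of $\mathbb{C}(X)$ centered at the same point $\xi$ (with $\mathcal{O}_{X,\xi}$ analytically irreducible, which is available under our standing normality hypothesis) are linearly comparable on $\mathcal{O}_{X,\xi}$. Applying this to $v_0$ and $\mathrm{ord}_{E_m}$, combined with control on the integer $\mathrm{ord}_{E_m}(F_m) \geq m / C$, yields the required rate. Once this is established, choosing $m$ so that $C(f_i)/m < \varepsilon$ for $i = 1, \ldots, k$ places $v_m$ in $U(f_1, \ldots, f_k; \varepsilon)$, completing the proof. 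Detailed execution of the Izumi step is carried out in \cite{JM12} and \cite{BdFFU15}.
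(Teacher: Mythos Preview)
Your approach is valid and leads to a correct proof, but it differs substantially from the paper's. The paper does not argue directly at all: in Remark~\ref{rmk:Berk} it identifies $\Val_X$ with the fiber of the Berkovich analytification $X^{\mathrm{an}}$ over the generic point of $X$, observes that the induced topology is the topology of pointwise convergence, and then simply invokes the general density theorem for divisorial points in Berkovich spaces \cite[Theorem~7.12]{Gub98}. Your route is instead the constructive one---producing explicit divisorial approximants as (rescaled) exceptional divisors of normalized blow-ups of the valuation ideals $\mathfrak{a}_m$, and controlling the error with an Izumi-type comparison---which is exactly the approach carried out in the references \cite{JM12, BdFFU15} you cite. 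The Berkovich argument is shorter but imports more machinery; yours is more hands-on and actually exhibits the approximating divisors, which can be useful when one needs additional control on them.

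One small correction: the argument you sketch for the ``easy inequality'' actually yields the opposite direction. From $f^p \in \mathfrak{a}_m$ you obtain $p\,\ord_{E_m}(f) \geq \ord_{E_m}(F_m)$, hence $v_m(f) \geq m/p$; taking $p = \lceil m/v_0(f)\rceil$ gives the \emph{lower} bound $v_m(f) \geq v_0(f) - O(1/m)$, not the upper bound $v_m(f) \leq v_0(f)$ you state. The Izumi step then supplies the matching upper bound (with the uniformity in $m$ handled as in the cited references). This swap does not affect the soundness of the overall strategy.
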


\begin{rmk}\label{rmk:Berk} The above Proposition \ref{prop:Dense} is proved using the theory of Berkovich spaces. We sketch the proof for reader's convenience. We can associate to the variety $X$ its Berkovich analytification $X^{an}$. The set of points of $X^{an}$ is, by definition, the set of pairs $(x, |\cdot|_x)$ where $x \in X$ is a scheme point and $|\cdot|_x \colon k(x) \to \mathbb{R}$ is a valuation on the residue field of $x$. Then the set $\Val_X$ is identified with the preimage of the generic point of $X$ under the projection $\pi \colon X^{an} \to X$. The topology induced on $\Val_X$ from $X^{an}$ is precisely the topology of pointwise convergence. Thus Proposition \ref{prop:Dense} follows from the density theorem for divisorial points in a Berkovich space (see e.g. \cite[Theorem 7.12]{Gub98}).
\end{rmk}

\begin{defi}\label{defi:LogDis} Let $v = \ord_E$ be a divisorial valuation on $X$ where $E \subset Y$ is a prime divisor on a birational model $f \colon Y \to X$. Let $K_{Y/X} = K_Y - f^*K_X$ be the relative canonical divisor. Then the log discrepancy of $v$ is defined by $A_X(v) = 1 + \ord_E(K_{Y/X})$.
\end{defi}

\begin{prop}\label{prop:Extend}(\cite[Theorem 3.1]{BdFFU15}) The log discrepancy extends to a function $$A_X \colon \Val_X \to \mathbb{R} \cup \{\infty\}$$ which is lower semicontinuous and homogeneous of order $1$, that is, $A_X(\lambda \cdot v) = \lambda \cdot A_X(v)$ for $\lambda \in \mathbb{R}_{\geqslant 0}$.
\end{prop}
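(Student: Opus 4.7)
The plan is to extend $A_X$ by first passing through the intermediate class of quasi-monomial (Abhyankar) valuations and then approximating arbitrary valuations by these. On divisorial valuations, homogeneity is built into the definition: for $v = c\cdot\ord_E$ with $c \in \mathbb{R}_{\geqslant 0}$ one declares $A_X(v) = c(1 + \ord_E(K_{Y/X}))$, which is manifestly linear in $c$. Next, for a log resolution $\pi\colon Y \to X$ with simple normal crossing divisor $D = \sum_{i=1}^r D_i$ and a point $\xi \in \bigcap_i D_i$, each weight vector $\alpha \in \mathbb{R}^r_{\geqslant 0}$ defines a quasi-monomial valuation $v_\alpha$ with $v_\alpha(D_i) = \alpha_i$. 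On the simplicial cone $\mathrm{QM}(Y, D, \xi)$ of such valuations, set $A_X(v_\alpha) = \sum_i \alpha_i \bigl(1 + \ord_{D_i}(K_{Y/X})\bigr)$; this is linear in $\alpha$ and hence continuous and homogeneous, and it agrees with the divisorial definition when $\alpha$ has rational entries of a particular form.

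For an arbitrary $v \in \Val_X$, I would use the Jonsson--Mustaţă retractions $r_{Y, D} \colon \Val_X \to \mathrm{QM}(Y, D)$: given a component $D_i$ with local equation $z_i$ near $\xi$, track the values $v(z_i)$ and define $r_{Y, D}(v)$ to be the quasi-monomial valuation with those weights at the point where the $z_i$ vanish. As the log smooth model $(Y, D)$ is refined, the retractions $r_{Y, D}$ converge pointwise to the identity on $\Val_X$. The extension is then defined by
$$A_X(v) \;=\; \sup_{(Y, D)} A_X\bigl(r_{Y, D}(v)\bigr) \;\in\; \mathbb{R} \cup \{\infty\}.$$
Homogeneity of order $1$ is inherited from the quasi-monomial case: one checks directly that $r_{Y, D}(\lambda v) = \lambda\, r_{Y, D}(v)$ for $\lambda \geqslant 0$, and linearity of $A_X$ on $\mathrm{QM}(Y, D)$ gives $A_X(\lambda v) = \lambda A_X(v)$. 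Lower semicontinuity is then immediate: each composition $A_X \circ r_{Y, D}$ is continuous (the retraction $r_{Y, D}$ is continuous by construction from the evaluation maps $\mathrm{ev}_{z_i}$, and $A_X$ is linear on each simplicial cone of $\mathrm{QM}(Y, D)$), so $A_X$ is a pointwise supremum of continuous real-valued functions, hence lsc with values in $\mathbb{R} \cup \{\infty\}$.

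The main obstacle is to show that the supremum is actually the correct object and, in particular, that it agrees with the original divisorial definition when $v = \ord_E$. This requires a monotonicity statement: if $(Y', D') \to (Y, D)$ is a refinement of log smooth models, then $A_X(r_{Y', D'}(v)) \geqslant A_X(r_{Y, D}(v))$ for every $v$. This in turn follows from writing $K_{Y'/X} = \mu^* K_{Y/X} + K_{Y'/Y}$ with $K_{Y'/Y}$ effective (since $\mu \colon Y' \to Y$ is a birational morphism between smooth varieties) together with a careful bookkeeping of the weights assigned by the finer retraction. Once this monotonicity is in place, choosing a log resolution on which $E$ appears shows that the supremum is attained and equals $1 + \ord_E(K_{Y/X})$ on the divisorial locus, so the extension genuinely extends $A_X$ and the proposition follows.
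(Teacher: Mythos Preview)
The paper does not give its own proof of this proposition: it is stated with a citation to \cite[Theorem~3.1]{BdFFU15} and left unproved. Your sketch is essentially the construction carried out in \cite{JM12} and \cite{BdFFU15}---defining $A_X$ linearly on each quasi-monomial cone $\mathrm{QM}(Y,D)$, pushing an arbitrary valuation down via the retractions $r_{Y,D}$, and taking the supremum over log smooth models---so there is no discrepancy in approach to discuss. The monotonicity step you flag (that $A_X(r_{Y',D'}(v)) \geqslant A_X(r_{Y,D}(v))$ under refinement) is indeed the key technical point in those references, and your outline of it via effectivity of $K_{Y'/Y}$ is the right mechanism.
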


We recall the definition of the volume function for $\mathbb{Q}$-divisors (see e.g. \cite[Section 2.2.C]{Laz04}).

\begin{defi} Let $D$ be a $\mathbb{Q}$-divisor on a variety $X$ of dimension $n$. Let $k \in \mathbb{N}$ be such that $kD$ is an integral Cartier divisor. Then the volume of $D$ is defined by $$\Vol(D) = \limsup_{m \to \infty}\frac{n!}{k^n \cdot m^n}h^0(X, \OO_X(mkD)).$$
\end{defi}

\begin{defi}\label{defi:VanOrd} Let $v = \ord_E$ be a divisorial valuation on $X$ where $E \subset Y$ is a prime divisor on a birational model $f \colon Y \to X$. Let $L$ be a big $\mathbb{Q}$-divisor on $X$. We define the pseudoeffective threshold of $v$ (or the maximal vanishing order) with respect to $L$ by $$T_L(v) = \sup\{t \in \mathbb{R}  |  \Vol(f^*L - tE) > 0\}.$$ We also define the expected vanishing order of $v$ with respect to $L$ by $$S_L(v) = \frac{1}{\Vol(L)}\int_0^{\infty}\Vol(f^*L - tE)dt.$$
\end{defi}

\begin{prop}\label{prop:Extend2} \cite[Proposition 3.13]{BlJ20} The functions $T_L$ and $S_L$ can be uniquely extended to functions $\Val_X \to \mathbb{R}$ which are lower semicontinuous and homogeneous of order $1$, that is, $T(\lambda \cdot v) = \lambda \cdot T(v)$ and $S(\lambda \cdot v) = \lambda \cdot S(v)$ for $\lambda \in \mathbb{R}_{\geqslant 0}$.
\end{prop}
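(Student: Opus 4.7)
The plan is to extend $T_L$ and $S_L$ from $\dival_X$ to all of $\Val_X$ by exhibiting them as suprema (resp.\ limits) of explicit finite-level functionals that are manifestly continuous on $\Val_X$. Density of divisorial valuations (Proposition~\ref{prop:Dense}) together with the finite-level formulas then forces uniqueness, and homogeneity of degree one is inherited termwise from the approximants. The point is that each finite-level invariant is defined directly from sections of $mL$, a setting where the topology of pointwise convergence gives continuity for free.

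First I would handle $T_L$. For each positive integer $m$ such that $mL$ is Cartier, set
\[
T_m(v) := \max\{v(s) : 0 \neq s \in H^0(X, mL)\}.
\]
Choosing a basis $s_1, \dots, s_{N_m}$, the identity $v\bigl(\sum a_i s_i\bigr) \geq \min\{v(s_i) : a_i \neq 0\}$, with equality for generic coefficients, gives $T_m(v) = \max_i v(s_i)$. Each map $v \mapsto v(s_i)$ is continuous by the very definition of the topology on $\Val_X$, hence $T_m$ is continuous. Superadditivity $T_{m_1+m_2} \geq T_{m_1} + T_{m_2}$ (from the product of sections) combined with Fekete's lemma yields
\[
T_L(v) = \sup_m \frac{T_m(v)}{m},
\]
which I adopt as the definition on all of $\Val_X$. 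On $\dival_X$ this agrees with the volume-integral definition of $T_L$ by a standard asymptotic Riemann--Roch estimate, and the resulting function is lower semicontinuous as a supremum of continuous functions and positively homogeneous since each $T_m$ is.

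For $S_L$ I would introduce the $v$-filtration
\[
\mathcal{F}^t_v H^0(X, mL) := \{s \in H^0(X, mL) : v(s) \geq t\}
\]
and put
\[
S_m(v) := \frac{1}{m N_m}\sum_{j \geq 1}\dim \mathcal{F}^j_v H^0(X, mL),
\]
equivalent to $\frac{1}{m N_m}\sum_i v(s_i)$ for any $v$-compatible basis. Since the filtration jumps in a fixed finite-dimensional space vary continuously with $v$, each $S_m$ is continuous (indeed at least lower semicontinuous) on $\Val_X$. On $\dival_X$ with $v = \ord_E$, the identification $\dim \mathcal{F}^j_v H^0(X, mL) = h^0(Y, \varphi^*(mL) - jE)$ realizes $S_m(v)$ as a Riemann sum for $\frac{1}{\Vol(L)}\int_0^{T_L(v)}\Vol(\varphi^*L - tE)\,dt$, giving $S_m(v) \to S_L(v)$. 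Setting $S_L(v) := \lim_m S_m(v)$ on all of $\Val_X$ produces the desired lsc, positively homogeneous extension.

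The main obstacle is precisely the convergence $S_m \to S_L$ on $\dival_X$: one must control the discrete sum of dimensions $h^0(Y, \varphi^*(mL) - jE)$ uniformly in $j$ well enough to recognize it as a Riemann sum for the continuous integral $\int_0^\infty \Vol(\varphi^*L - tE)\,dt$. This rests on asymptotic Riemann--Roch together with continuity of the volume function on the big cone in $N^1(Y)_\mathbb{R}$, via Fujita approximation. Once this analytic input is in place, lower semicontinuity, homogeneity, and uniqueness are formal consequences of presenting $T_L$ and $S_L$ as suprema (respectively limits) of the continuous approximants $T_m/m$ and $S_m$.
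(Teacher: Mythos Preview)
The paper does not give its own proof of this proposition; it is quoted directly from \cite[Proposition~3.13]{BlJ20} with no argument supplied. Your outline is in fact the same strategy Blum--Jonsson use: introduce the finite-level invariants $T_m$ and $S_m$ coming from the order-of-vanishing filtration on $H^0(X,mL)$, and recover $T_L$ and $S_L$ in the limit.

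There is, however, a genuine gap in your treatment of $S_L$. You assert that lower semicontinuity is a ``formal consequence'' of writing $S_L$ as the pointwise limit of the continuous approximants $S_m$, but this is false in general: a pointwise limit of continuous functions need not be lower semicontinuous (e.g.\ $f_m(x)=\max(0,1-m|x|)$ on $\mathbb{R}$ converges to the indicator of $\{0\}$, which is upper but not lower semicontinuous). For $T_L$ you are safe, since a supremum of continuous functions is automatically lsc. For $S_L$, what Blum--Jonsson actually prove is the \emph{uniform} estimate $|S_m(v)-S_L(v)|\le \varepsilon_m\, T_L(v)$ with $\varepsilon_m\to 0$ depending only on $(X,L)$, obtained via the concave transform on the Okounkov body of $L$. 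This uniformity (relative to $T_L$) is precisely what transfers (semi)continuity from the $S_m$ to $S_L$, and it is the missing idea in your sketch rather than a technicality subsumed by ``asymptotic Riemann--Roch''. A smaller slip: your identity $T_m(v)=\max_i v(s_i)$ holds only for a basis \emph{adapted} to the filtration $\mathcal{F}^\bullet_v$, which varies with $v$; for an arbitrary fixed basis one can easily have $v(\sum a_i s_i)>\max_i v(s_i)$ (take $s_1=1$, $s_2=1+x$ with $v=\ord_0$), so the inequality you wrote does not yield $T_m(v)=\max_i v(s_i)$, and continuity of $T_m$ needs a slightly different justification.
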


Now we give the valuative definition of the $\alpha$ and $\delta$-thresholds, following \cite{BlJ20}. This definition is equivalent to Definition \ref{defi:Divisors}, as shown in \cite[Theorem C]{BlJ20}.

\begin{defi}\label{defi:Thresholds} We define the $\alpha$- and $\delta$-thresholds of $(X, L)$ (or the $\alpha$- and $\delta$-invariants of $(X, L)$) by the formulas $$\alpha(X, L)  = \inf_{v \in \Val_X}\frac{A_X(v)}{T_L(v)} = \inf_{v \in \dival_X}\frac{A_X(v)}{T_L(v)}$$ and analogously $$\delta(X, L)  = \inf_{v \in \Val_X}\frac{A_X(v)}{S_L(v)} = \inf_{v \in \dival_X}\frac{A_X(v)}{S_L(v)}.$$ In the case of a variety with a boundary divisor $(X, \Delta; L)$ such that $(X, \Delta)$ is klt we define $\alpha(X, \Delta; L)$ and $\delta(X, \Delta; L)$ in the same way by introducing the log discrepancy $A_{X, \Delta}(v)$ with respect to the pair $(X, \Delta)$.
\end{defi}

\subsection{Automorphisms preserving a big divisor class} We consider a pair $(X, L)$ where $X$ is a normal complex projective variety and $L$ is a big $\QQ$-Cartier divisor. We denote by $\aut(X, L) \subset \aut(X)$ the subgroup of automorphisms of $X$ preserving the class $[L] \in \mathrm{Cl}(X)$. The following proposition is well-known; however we do not know a standard reference (see e. g. \cite[Lemma 3.1.2]{KPS18}).

\begin{prop}\label{prop:Linear} Let $(X, L)$ be a variety with a big $\mathbb{Q}$-Cartier divisor and let $m \in \mathbb{N}$ be such that $mL$ is Cartier and the map $$\varphi_{|mL|} \colon X \to \PP(H^0(X, \OO_X(mL))^*)$$ is birational onto its image. Then the above map is $\aut(X, L)$-equivariant and the group $\aut(X, L)$ embeds into $\mathrm{PGL}(H^0(X, \OO_X(mL)^*)$. Therefore $\aut(X, L)$ is a linear algebraic group.
\end{prop}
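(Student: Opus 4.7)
The plan is to argue that the rational map $\varphi_{|mL|}$ intertwines $\aut(X,L)$ with a subgroup of $\mathrm{PGL}$ on the target, and then deduce that $\aut(X,L)$ is a closed subgroup of a linear algebraic group.

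First I would set up the action of $\aut(X,L)$ on sections. By definition, for every $g \in \aut(X,L)$ the pullback $g^{*}[L] = [L]$ in $\mathrm{Cl}(X)$, hence also $g^{*}[mL]=[mL]$ since $mL$ is Cartier. Because $X$ is projective and normal, this gives an isomorphism of sheaves $g^{*}\OO_X(mL) \cong \OO_X(mL)$, unique up to a global scalar in $\mathbb{C}^{*} = H^0(X,\OO_X^{*})$. Choosing such an isomorphism, I get an induced $\mathbb{C}$-linear automorphism of $V := H^0(X,\OO_X(mL))$, defined up to scalar, i.e. a well-defined homomorphism
\[
\rho \colon \aut(X,L) \longrightarrow \mathrm{PGL}(V).
\]

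Next I would check that the rational map $\varphi_{|mL|}\colon X \dashrightarrow \PP(V^{*})$ is $\aut(X,L)$-equivariant with respect to $\rho$. This is essentially bookkeeping: for $g \in \aut(X,L)$ and a basis $(s_0,\dots,s_N)$ of $V$, one has $g^{*}s_i = \sum_j a_{ij} s_j$ for some matrix $(a_{ij})$ representing $\rho(g)$, and the equality $\varphi_{|mL|}(g(x))= \rho(g)\cdot \varphi_{|mL|}(x)$ at a general point $x$ is immediate from the definition of $\varphi_{|mL|}$ as $x \mapsto [s_0(x):\dots:s_N(x)]$.

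The crucial step is injectivity of $\rho$. Suppose $\rho(g)$ is the identity in $\mathrm{PGL}(V)$. Then, by the equivariance just shown, $g$ fixes $\varphi_{|mL|}(x)$ for every $x$ in the nonempty open locus $U \subset X$ where $\varphi_{|mL|}$ is an isomorphism onto its image. Since $\varphi_{|mL|}|_U$ is injective, $g|_U = \mathrm{id}_U$, and as $X$ is separated and irreducible this forces $g = \mathrm{id}_X$. Hence $\rho$ is injective, and composition with $\rho$ identifies $\aut(X,L)$ with a subgroup of the linear algebraic group $\mathrm{PGL}(V)$. Closedness of this image follows from the usual fact that $\aut(X,L)$ is defined inside $\mathrm{PGL}(V)$ by the algebraic conditions of preserving the image $\varphi_{|mL|}(X)$ (a closed subvariety of $\PP(V^{*})$) and of extending to a biregular self-map of $X$; the latter can be phrased as the graph $\Gamma_g \subset X \times X$ being closed and projecting isomorphically to both factors, which is a closed condition in families.

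The step I expect to be the main obstacle is the last one: showing that the image $\rho(\aut(X,L))$ is actually Zariski closed in $\mathrm{PGL}(V)$, as opposed to merely being an abstract subgroup. For this I would invoke the standard representability of the automorphism functor of a polarized projective variety (Matsumura--Oort) or, more elementarily, identify $\aut(X,L)$ with the stabilizer of the point $[X] \in \mathrm{Hilb}_{\PP(V^{*})}$ under the natural $\mathrm{PGL}(V)$-action on the Hilbert scheme, which is closed. The remaining assertions (equivariance, injectivity) are essentially formal once the action on sections has been set up correctly.
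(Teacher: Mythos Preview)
The paper does not give its own proof of this proposition: it is stated as well-known, with a pointer to \cite[Lemma~3.1.2]{KPS18}, and no argument is supplied. So there is nothing to compare your approach against directly; what remains is to check that your argument is sound.

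Your proof is essentially correct and is the standard one. The construction of $\rho$, the equivariance of $\varphi_{|mL|}$, and the injectivity of $\rho$ are all fine. One small refinement in the injectivity step: you need $g(x)$ to lie in the open set $U$ where $\varphi_{|mL|}$ is an isomorphism onto its image, not just $x$; but $g^{-1}(U)\cap U$ is nonempty open, and on that set your argument goes through, so $g=\mathrm{id}$ on a dense open and hence everywhere.

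The only point that deserves a little more care is the closedness of the image, which you rightly flag. Your Hilbert-scheme suggestion identifies the stabilizer of $[\,\overline{\varphi_{|mL|}(X)}\,]$ in $\mathrm{PGL}(V)$ with $\aut(X,mL)$, not with $\aut(X,L)$: an element of the stabilizer lifts (via normalization, since $X$ is normal and $\varphi_{|mL|}$ is birational onto its image) to an automorphism of $X$ preserving $[mL]$, but not a priori $[L]$. This is harmless, since $\aut(X,L)\subset\aut(X,mL)$ has finite index (the quotient injects into the $m$-torsion of $\mathrm{Cl}(X)$), and a finite-index subgroup of an affine algebraic group over $\mathbb{C}$ contains the identity component and is therefore closed. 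Alternatively, and perhaps more cleanly, one can invoke representability of $\aut(X)$ as a group scheme locally of finite type, observe that $\aut(X,L)$ is open and closed in $\aut(X)$ (it contains $\aut^0(X)$, which acts trivially on the discrete N\'eron--Severi group), and then use that an injective homomorphism of group schemes in characteristic zero is a closed immersion.
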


We are mostly interested in the case when $X$ is a Fano variety and $L = -K_X$. Since the automorphism group $\aut(X)$ preserves the anticanonical class, we have the following corollary.

\begin{cor}\label{cor:FanoLin} The automorphism group of a $\QQ$-Fano variety is a linear algebraic group.
\end{cor}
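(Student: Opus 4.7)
The plan is to deduce the corollary directly from Proposition \ref{prop:Linear} applied to the anticanonical polarization. The key observation is that the canonical class $K_X$ is intrinsically attached to the variety $X$ (it is defined from the dualizing sheaf), so every automorphism of $X$ preserves the class $[-K_X] \in \mathrm{Cl}(X)$. Hence we have the equality of groups $\aut(X) = \aut(X, -K_X)$, and it suffices to check that the hypothesis of Proposition \ref{prop:Linear} is satisfied for the pair $(X, -K_X)$.

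Next I would choose an appropriate $m$. Since $X$ is a $\mathbb{Q}$-Fano variety, $-K_X$ is an ample $\mathbb{Q}$-Cartier divisor by definition. Pick some positive integer $m_0$ such that $-m_0 K_X$ is an integral Cartier divisor; then $-m_0 K_X$ is ample, so by a standard consequence of Serre's theorem there exists $m$ (a suitable multiple of $m_0$) for which $-mK_X$ is very ample. For such an $m$, the linear system $|-mK_X|$ defines a closed embedding
\[
\varphi_{|-mK_X|} \colon X \hookrightarrow \PP\bigl(H^0(X, \OO_X(-mK_X))^*\bigr),
\]
which is a fortiori birational onto its image, so the hypothesis of Proposition \ref{prop:Linear} holds.

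Finally, applying Proposition \ref{prop:Linear} to $(X, -mK_X)$ (or equivalently to $(X, -K_X)$, since scaling by a positive integer does not change the stabilizer of the class), we conclude that $\aut(X, -K_X) = \aut(X)$ embeds as a subgroup of $\mathrm{PGL}(H^0(X, \OO_X(-mK_X))^*)$, and in particular is a linear algebraic group. The only mildly nontrivial point is verifying that $\aut(X)$ preserves the class $[-K_X]$ in $\mathrm{Cl}(X)$ and not merely up to some ambiguity; this is immediate from the canonical construction of $K_X$ via the dualizing sheaf on a normal variety, since any automorphism pulls back this sheaf to itself. No real obstacles are expected; the argument is essentially a one-line application of the preceding proposition once the correct $m$ is chosen.
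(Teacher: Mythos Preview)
Your proposal is correct and follows exactly the same approach as the paper: the paper simply notes that $\aut(X)$ preserves the anticanonical class and invokes Proposition~\ref{prop:Linear}. Your write-up is more detailed (choosing $m$ so that $-mK_X$ is very ample), but the underlying argument is identical.
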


We will also consider triples $(X, \Delta; L)$; here the group $\aut(X, \Delta; L) \subset \aut(X)$ is a stabilizer of the boundary divisor $\Delta$ and the class of $L$. By Proposition \ref{prop:Linear} the group $\aut(X, \Delta; L)$ is a linear algebraic group as well.

\subsection{$G$-invariant valuations}

In this subsection we work with $(X, L)$ as above and a closed connected subgroup $G \subset \aut(X, L)$; then $G$ is a linear algebraic group. The group $G$ acts on the function field $\mathbb{C}(X)$ by $f \mapsto \gamma \cdot f$ and therefore on $\Val_X$. We describe the space of $G$-invariant valuations mainly following \cite{Kn93, Tim11}.

\begin{defi} We denote by $\Val^G_X \subset \Val_X$ the set of all valuations $v \colon \mathbb{C} (X)^* \to \mathbb{R}$ which are invariant under the action of $G$. That is, a nontrivial valuation $v$ belongs to $\Val^G_X$ if and only if for all $\gamma \in G$ and for all $f \in \mathbb{C}(X)^*$ we have $v(\gamma \cdot f) = v(f).$ We also denote by $\dival^G_X$ the set of all {\it divisorial} $G$-invariant valuations. It has a topology induced from $\Val_X$.
\end{defi}

\begin{prop}{(see e. g. \cite[Proposition 19.8]{Tim11})} Let $X$ be a $G$-variety with $G$ connected. Then every $G$-invariant divisorial valuation $v \in \dival^G_X$ is proportional to a valuation $\ord_D$ where $D$ is a $G$-invariant prime divisor on some birational $G$-model of $X$.
\end{prop}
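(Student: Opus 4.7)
The plan is to start from an arbitrary divisorial presentation $v = c \cdot \ord_E$ of the given $G$-invariant valuation on some normal birational model $\pi \colon Y \to X$, and then upgrade $Y$ to a $G$-equivariant birational model on which the center of $v$ becomes a $G$-stable prime divisor. Once $G$ acts regularly on a model containing (the strict transform of) $E$, the $G$-invariance of $v$ will mechanically force $E$ to be $G$-stable, via the fact that distinct prime divisors on a fixed normal model give distinct divisorial valuations.

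First I would construct the equivariant model. Since $Y$ and $X$ share the function field $\mathbb{C}(X) = \mathbb{C}(Y)$ and $G$ acts on $\mathbb{C}(X)$, the group $G$ acts at least birationally on $Y$. By the Weil--Rosenlicht regularization theorem for rational actions of a connected algebraic group, combined with $G$-equivariant resolution of indeterminacies of the rational map $G \times Y \dashrightarrow Y$ and, if necessary, $G$-equivariant resolution of singularities, there exists a birational morphism $\mu \colon Y' \to Y$ such that $Y'$ is normal, carries a regular $G$-action, and is $G$-equivariantly birational to $Y$. Let $\tilde{E} \subset Y'$ be the strict transform of $E$. Then $\tilde{E}$ is a prime divisor on $Y'$ and $v = c \cdot \ord_{\tilde{E}}$.

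Next I would exploit the $G$-invariance of $v$ to conclude that $\tilde{E}$ is $G$-stable. For every $g \in G$ the translate $g \cdot \tilde{E}$ is again a prime divisor on the fixed model $Y'$, and the pushforward $g_{*} v$ equals $c \cdot \ord_{g \cdot \tilde{E}}$. The hypothesis $g_{*} v = v$ then gives $\ord_{g \cdot \tilde{E}} = \ord_{\tilde{E}}$ as valuations on $\mathbb{C}(X)$, and since distinct prime divisors on the fixed normal variety $Y'$ induce distinct divisorial valuations we conclude $g \cdot \tilde{E} = \tilde{E}$. Equivalently, one can invoke connectedness of $G$ here directly: the orbit map $g \mapsto g \cdot \tilde{E}$ sends the connected variety $G$ into the countable discrete set of prime divisors of $Y'$, hence is constant. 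Either way, $D := \tilde{E}$ is a $G$-invariant prime divisor on the birational $G$-model $Y'$ with $v = c \cdot \ord_D$, as required.

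The main obstacle is the regularization step: producing a birational model of $Y$ on which the a priori only birational $G$-action becomes regular, while still having a prime divisor in the birational class of $E$. This is a classical but non-trivial input, available precisely because $G$ is a connected algebraic group; the remainder of the argument is then a routine bookkeeping check with divisorial valuations on a single fixed model.
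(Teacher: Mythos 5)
The paper itself offers no proof here (it simply cites Timashev), so I am judging your argument against the standard one. Your second half is fine, but the first half has a genuine gap: the model you need does not exist in general. You ask for a \emph{birational morphism} $\mu\colon Y'\to Y$ with $Y'$ normal, carrying a regular $G$-action, and dominating the \emph{given} model $Y$. Such a dominating equivariant model need not exist. Take $X=\mathbb{P}^2$, let $G\subset\mathrm{PGL}_3$ be the (connected) stabilizer of a point $q$, and let $Y$ be the blow-up of $\mathbb{P}^2$ at $q$ and at a second point $p\neq q$; then $v=\ord_{E_q}$ is a $G$-invariant divisorial valuation realized by a prime divisor on $Y$. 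Any normal $Y'$ with regular $G$-action admitting a proper birational $G$-morphism to $Y$ would map $G$-equivariantly and properly onto $\mathbb{P}^2$; its non-isomorphism locus downstairs is closed and $G$-stable, hence contained in the closed orbit $\{q\}$, so $Y'\to\mathbb{P}^2$ is an isomorphism over $p$ and cannot factor through the blow-up at $p$. Weil--Rosenlicht regularization produces \emph{some} model with a regular action, not one dominating a prescribed $Y$, and resolving the indeterminacy of $G\times Y\dashrightarrow Y$ modifies $G\times Y$, not $Y$; neither tool delivers the $\mu$ you invoke.

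The repair is to forget $Y$ and build the $G$-model around $v$ itself, starting from $X$, which already carries a regular $G$-action. Since $g_*v=v$ and $g$ sends the center of $v$ to the center of $g_*v$, the center of $v$ on $X$ is a $G$-stable closed subvariety; blow it up and normalize (both operations are then automatically $G$-equivariant), note that the center of $v$ on the new model is again $G$-stable, and iterate. Zariski's theorem on divisorial valuations guarantees that after finitely many steps the center becomes a prime divisor $D$, which is $G$-stable by construction, and $v$ is proportional to $\ord_D$ because its valuation ring then contains the discrete valuation ring $\OO_{Y',D}$. This is essentially the argument behind the cited result. Two smaller remarks: once one is on a normal $G$-model where the center of $v$ is a divisor, your argument via $g_*v=v$ and the injectivity of $E\mapsto\ord_E$ correctly shows the divisor is $G$-stable, and it does not use connectedness; by contrast, your ``alternative'' argument that the orbit map lands in a ``countable discrete set of prime divisors'' is not valid --- the prime divisors of a variety form neither a countable nor a discrete set, and connected groups certainly move divisors (translates of a line in $\mathbb{P}^2$), so connectedness alone proves nothing at that step.
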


\begin{rmk}\label{rmk:Empty} Observe that the trivial valuation $v_{0} \colon \mathbb{C}(X) \to \mathbb{R}^*$ does not belong to $\Val_X$ or to $\Val^G_X$. Thus for any $v \in \Val^G_X$ the center of $v$ on $X$ (which exists since $X$ is projective) is $G$-invariant; its closure is a {\em proper} $G$-invariant subvariety of $X$. Hence the space $\Val^G_X$ can be empty, for example if $G$-action on $X$ is transitive (see Remark \ref{rmk:Homog} below). 
\end{rmk}

The following approximation theorem of Sumihiro is crucial for the study of $G$-invariant valuations.

\begin{thm}[{\cite[Lemma 10]{Sum74}}] \label{thm:Sumihiro} Let $G$ be a connected algebraic group and let $X$ be an irreducible $G$-variety. Then for any divisorial valuation $v$ on $\mathbb{C}(X)$ there exists a $G$-invariant valuation $\bar v \in \dival^G_X \cup \mathbb{R}_{\geqslant 0}\{v_0\}$ with the following property: for any $f \in \mathbb{C}(X)$ there exists a nonempty Zariski-open subset $U_f \subset G$ such that for any $\gamma \in U_f$ we have $v(\gamma \cdot f) = \bar v(f)$.
\end{thm}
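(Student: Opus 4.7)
The plan is to construct $\bar v$ as a canonical (Gauss-type) extension of $v$ to the function field of $G \times X$, restricted along the action morphism. Write $v = c\cdot\ord_E$ for $E \subset Y$ a prime divisor on a birational model $\pi\colon Y \to X$. First I would form a smooth birational model $Z$ equipped with morphisms $q\colon Z \to Y$ extending the projection $G\times Y \to Y$ and $\Sigma\colon Z \to X$ extending the action $\sigma\colon G\times X \to X$, $(g,x)\mapsto g\cdot x$; then set $\tilde E := q^{-1}(E)$ and $\tilde v := c\cdot\ord_{\tilde E}$, a divisorial valuation on $\mathbb{C}(Z) = \mathbb{C}(G\times X)$. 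By construction $\tilde v \circ q^{*} = v$, and $\tilde v$ is the Gauss extension of $v$ along the purely transcendental extension $\mathbb{C}(X) \hookrightarrow \mathbb{C}(Z)$ (using rationality of the connected linear group $G$). I would then define $\bar v\colon \mathbb{C}(X)^{*} \to \mathbb{R}$ by $\bar v(f) := \tilde v(\Sigma^{*} f)$.

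To verify the approximation property, expand $\Sigma^{*}f = u\cdot t^{N_f}$ in the local ring of $Z$ at the generic point of $\tilde E$, where $t$ is a local equation for $\tilde E$, $u$ a unit, and $N_f := \ord_{\tilde E}(\Sigma^{*}f)$. The locus $U_f \subset G$ of those $\gamma$ for which $u|_{\{\gamma\}\times Y}$ remains a unit at the generic point of $\{\gamma\}\times E$ is Zariski-open and dense, by generic flatness of $\tilde E$ over $G$ together with constructibility of nonvanishing loci. For $\gamma \in U_f$, restricting $\Sigma^{*}f$ to $\{\gamma\}\times Y \subset Z$ yields (up to the chosen convention on left/right action) the function $q^{*}(\gamma\cdot f)$, whose order along $E$ equals $N_f$; hence $v(\gamma\cdot f) = c\cdot\ord_E(\gamma\cdot f) = cN_f = \bar v(f)$, exactly the desired identity.

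$G$-invariance of $\bar v$ is then formal: for any $g_0 \in G$ the sets $U_{g_0\cdot f}$ and the translate $g_0^{-1}U_f$ are both Zariski-dense in $G$, so any $\gamma$ in their intersection satisfies $\bar v(g_0\cdot f) = v(\gamma g_0\cdot f) = \bar v(f)$. For the divisorial-or-trivial dichotomy, observe that $\bar v$ is by construction the restriction of the divisorial valuation $\tilde v$ along the field embedding $\Sigma^{*}\colon \mathbb{C}(X) \hookrightarrow \mathbb{C}(Z)$; by Abhyankar's inequality, the restriction of a rational-rank-one valuation to a subfield is either trivial or again of rational rank one, placing $\bar v$ in $\dival^G_X \cup \mathbb{R}_{\geqslant 0}\{v_0\}$. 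The trivial alternative occurs precisely when $\Sigma^{*}\mathbb{C}(X)$ lands inside the residue field of $\tilde v$, which corresponds geometrically to the $G$-orbit of the center $\pi(E)$ being dense in $X$.

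The hardest part will be the geometric bookkeeping in constructing the model $Z$ so that $\tilde E$ is flat (or at least behaves well) over a dense open of $G$ via the first projection, which is what makes the order of $\Sigma^{*}f$ along $\tilde E$ specialize correctly to the order along $\{\gamma\}\times E$ for generic $\gamma$. A closely related subtlety is tracking multiplicities through the birational maps and verifying that the Gauss-extension interpretation of $\tilde v$ is compatible with the restriction along $\Sigma^{*}$; here one may need to invoke equivariant resolution of singularities for $G \times X$ and for the rational map $\sigma$ in order to make both $q$ and $\Sigma$ simultaneously regular on the same model.
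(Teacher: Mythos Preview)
The paper does not provide its own proof of this theorem: it is quoted verbatim as Sumihiro's result \cite[Lemma 10]{Sum74} and used as a black box. There is therefore no in-paper proof to compare your proposal against.

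That said, your outline is essentially Sumihiro's original argument (also reproduced in \cite[\S19]{Tim11}), and it is largely correct. Two remarks may sharpen it. First, the model $Z$ can simply be taken to be $G \times Y$: the projection $q\colon G\times Y \to Y$ is obvious, and the action morphism $\Sigma\colon G\times Y \to G\times X \to X$, $(g,y)\mapsto g\cdot\pi(y)$, is already regular, so no further resolution is needed; your final paragraph about equivariant resolution and flatness over $G$ is unnecessary, since $\tilde E = G\times E \to G$ is literally a product projection. Second, your dichotomy step is slightly imprecise: ``rational rank one'' alone does not imply divisorial. What you actually need is the full Abhyankar equality. Since $\tilde v$ is divisorial on $\mathbb{C}(G\times X)$ one has $1+\mathrm{tr.deg}_{\mathbb{C}}(k_{\tilde v}) = \dim G + \dim X$; combining the Abhyankar inequality for the extension $\mathbb{C}(X)\subset\mathbb{C}(G\times X)$ with the Abhyankar inequality for $\bar v$ on $\mathbb{C}(X)$ forces, when $\bar v$ is nontrivial, both $\mathrm{rat.rk}(\bar v)=1$ and $\mathrm{tr.deg}_{\mathbb{C}}(k_{\bar v})=\dim X-1$, which is precisely the characterization of a divisorial valuation. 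Finally, the ``Gauss extension'' interpretation and the appeal to rationality of $G$ are inessential: all that is used is that $G$ is irreducible, so that $G\times E$ is a prime divisor.
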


\begin{prop}\label{prop:DivDense} Let $X$ be a $G$-variety. The set $\dival^G_X$ of $G$-invariant divisorial valuations is dense in the set $\Val^G_X$ in the topology of pointwise convergence. 
\end{prop}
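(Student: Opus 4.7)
The strategy combines the density of divisorial valuations (Proposition~\ref{prop:Dense}) with the averaging construction of Theorem~\ref{thm:Sumihiro}. Given $v \in \Val^G_X$ and a basic open neighborhood
$$W = \{w \in \Val^G_X : |w(f_i) - v(f_i)| < \epsilon, \ i = 1, \ldots, n\}$$
of $v$, the goal is to produce $v'' \in \dival^G_X \cap W$. I will take $v'' = \bar{v'}$, the Sumihiro average of a well-chosen divisorial approximation $v' \in \dival_X$ to $v$.

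The core technical step is a finite-dimensional reduction. Since $G$ preserves the class of a big $\QQ$-divisor, after replacing $L$ by a suitable power we may assume $L$ is $G$-linearized and write each $f_i = p_i / q_i$ with $p_i, q_i \in H^0(X, L)$. The $G$-orbits $G \cdot p_i$ and $G \cdot q_i$ lie in (automatically finite-dimensional) $G$-submodules of $H^0(X, L)$; I choose bases $\{e^{+}_{i,k}\}$ and $\{e^{-}_{i,k}\}$ of their linear spans. For any divisorial valuation $w$, expanding $\gamma \cdot p_i = \sum_k c^{+}_{i,k}(\gamma) e^{+}_{i,k}$ gives $w(\gamma \cdot p_i) = \min_k w(e^{+}_{i,k})$ on a nonempty Zariski-open subset of $G$ (the condition that the minimum-order summands do not cancel), and analogously for $q_i$. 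Combining these with Theorem~\ref{thm:Sumihiro} yields the closed formula
$$\bar{w}(f_i) = \min_k w(e^{+}_{i,k}) - \min_k w(e^{-}_{i,k}),$$
in which $\bar w(f_i)$ depends continuously on the finitely many numbers $w(e^{\pm}_{i,k})$ (all understood modulo a common additive constant fixed by an auxiliary reference section $s_0$, which cancels in the difference). Since $v$ is $G$-invariant, $\bar v = v$ and the same identity computes $v(f_i)$.

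Now Proposition~\ref{prop:Dense} applied to the rational functions $e^{\pm}_{i,k}/s_0$ produces, for any $\delta > 0$, a divisorial valuation $v' \in \dival_X$ with $|v'(e^{\pm}_{i,k}) - v(e^{\pm}_{i,k})| < \delta$ for every $i, k$. Continuity of the minimum on finite sets then forces $|\bar{v'}(f_i) - v(f_i)| < \epsilon$ provided $\delta$ is small enough, and thus $\bar{v'} \in W$.

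The main obstacle I anticipate is that $\bar{v'}$ could a priori lie in $\mathbb{R}_{\geqslant 0}\{v_0\}$ rather than in $\dival^G_X$, in which case it would not be an admissible approximation. One avoids this by requiring $v'$ to have its center inside a proper $G$-invariant subvariety of $X$---for example inside the closure of the center of $v$, which is $G$-stable and proper by Remark~\ref{rmk:Empty}. A secondary technical point is the existence of a $G$-equivariant linearization on some power of $L$, which is standard for connected linear algebraic groups and, since any character ambiguity is absorbed into the additive constant, does not affect the final formula.
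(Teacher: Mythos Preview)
Your argument is sound in its main thrust but follows a genuinely different route from the paper. The paper's proof is more streamlined: given $v \in \Val^G_X$, it chooses any sequence $v_i \in \dival_X$ converging to $v$ (Proposition~\ref{prop:Dense}), forms the Sumihiro averages $\bar v_i$, and shows $\bar v_i \to v$ directly. The trick is that for a fixed $f \in \mathbb{C}(X)$ the sets $U_{f,i} = \{\gamma \in G \mid v_i(\gamma \cdot f) = \bar v_i(f)\}$ are nonempty Zariski-open in $G$, so by uncountability of $\mathbb{C}$ their countable intersection $U_f$ is nonempty; for any $\gamma \in U_f$ one gets $\bar v_i(f) = v_i(\gamma \cdot f) \to v(\gamma \cdot f) = v(f)$ by $G$-invariance of $v$. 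No linearization, no bases, no explicit formula for the average. Your approach trades this Baire-type trick for the closed formula $\bar w(f_i) = \min_k w(e^+_{i,k}) - \min_k w(e^-_{i,k})$, which is correct (the generic value of $w$ on the $G$-orbit of $p_i$ is the minimum over any spanning set, since the orbit spans and the filtration $\{w \geq t\}$ is by linear subspaces) and makes the continuity of Sumihiro averaging visible on finite-dimensional $G$-modules, at the cost of setting up a $G$-linearization of a power of $L$ and writing each $f_i$ as a ratio of sections.

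One point is not fully justified: your proposed fix for the possibility $\bar v' \in \mathbb{R}_{\geqslant 0}\{v_0\}$ asks for $v'$ with center inside $\overline{c_v}$, but Proposition~\ref{prop:Dense} gives no control on centers, so this would require a separate density statement for divisorial valuations with constrained center. A cleaner patch, closer to what the paper does implicitly, is to enlarge the data defining $W$: since $v$ is nontrivial pick $f_0 \in \mathbb{C}(X)^*$ with $v(f_0) \neq 0$, adjoin $f_0$ to $f_1, \ldots, f_n$ with tolerance $\epsilon < |v(f_0)|$, and run your argument for this smaller neighborhood $W' \subset W$. The resulting $\bar v'$ then satisfies $\bar v'(f_0) \neq 0$, hence lies in $\dival^G_X$, and of course still lies in $W$.
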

\begin{proof} We assume that $\Val^G_X$ is nonempty; otherwise there is nothing to prove. Suppose that $v \in \Val_X$ is $G$-invariant. By Proposition \ref{prop:Dense} we can find a sequence $\{v_i\}$ of divisorial valuations converging to $v$. For every $i \in \mathbb{N}$ let $\bar v_i$ be the $G$-invariant valuation associated to $v_i$ by Theorem \ref{thm:Sumihiro}. It suffices to prove that the sequence $\{\bar v_i\}$ converges to $v$; since $v$ is nontrivial, almost all $v_i$ have to be divisorial in this case. Take a rational function $f \in \mathbb{C}(X)$; then for every $i \in \mathbb{N}$ we have the subset $$U_{f, i} = \{\gamma \in G \mid v_i(\gamma \cdot f) = \bar v_i(f)\}.$$ Let $U_f = \cap_{i \in \mathbb{N}}U_{f, i}$ be the intersection of these subsets; it is nonempty since the field $\mathbb{C}$ is uncountable. Then for any $\gamma \in U_f$ we obtain that $$\bar v_i(f) = v_i(\gamma \cdot f)$$ converge to $v(\gamma \cdot f) = v(f)$ by definition of $\bar v_i$ and $G$-invariance of $v$. Therefore for any $f \in \mathbb{C}(X)$ the sequence $\bar v_i(f)$ converges to $v(f)$, as desired. This proves density of $G$-invariant divisorial valuations in the space $\Val^G_X$.
\end{proof}

Now let us introduce the definitions for the $\alpha_G$ and $\delta_G$-thresholds via $G$-invariant valuations.

\begin{defi}\label{defi:MainDefi} Let $X$ be a variety and let $L$ be a big $\mathbb{Q}$-Cartier divisor. Let also $G \subset \aut(X, L)$ be a closed connected subgroup; we define the $G$-invariant $\alpha$- and $\delta$-thresholds of $(X, L)$ (we will also call them the $\alpha$- and $\delta$-invariants of $(X, L)$ with the action of $G$) by the formulas $$\alpha_G(X, L)  = \inf_{v \in \Val^G_X}\frac{A_X(v)}{T_L(v)} = \inf_{v \in \dival^G_X}\frac{A_X(v)}{T_L(v)}$$ and analogously $$\delta_G(X, L)  = \inf_{v \in \Val^G_X}\frac{A_X(v)}{S_L(v)} = \inf_{v \in \dival^G_X}\frac{A_X(v)}{S_L(v)}.$$ 
For a variety $X$ with $L = -K_X$ big we write $\delta_G(X, -K_X) = \delta_G(X)$ and the same for $\alpha_G$. In the case of a pair $(X, \Delta; L)$ and a subgroup $G \subset \aut(X, \Delta; L)$ we can define $\alpha_G(X, \Delta; L)$ and $\delta_G(X, \Delta; L)$ in the same way by introducing the log discrepancy $A_{X, \Delta}(v)$ with respect to the pair $(X, \Delta)$.
\end{defi}

\begin{rmk} The extension procedure in Propositions \ref{prop:Extend} and \ref{prop:Extend2} uses density of divisorial valuations in $\Val_X$. By Proposition \ref{prop:DivDense} we can restrict the functions $A_X(v)$, $T_L(v)$ and $S_L(v)$ to $\dival^G_X$ and extend to $\Val^G_X$ by the same procedure; the resulting functions will coincide with the restrictions to $\Val^G_X$ of the respective extensions of $A_X(v)$, $T_L(v)$ and $S_L(v)$, given by Propositions \ref{prop:Extend} and \ref{prop:Extend2}.
\end{rmk}

\begin{rmk} \label{rmk:Homog} If a variety $X$ is a homogeneous space under the action of an algebraic group $G$ then both $\alpha_G$ and $\delta_G$ are infinite. Indeed, since the action of $G$ is transitive, the space $\Val^G_X$ is empty.
\end{rmk}

\begin{rmk}\label{rmk:DefAlpha} In \cite[Appendix A]{CS08} the algebraic version of Tian's invariant $\alpha_G(X)$ was defined by $$\alpha_G(X, L) = \inf_{m \in \mathbb{N}} \inf_{\substack{ |\Sigma| \subset |mL| \\  |\Sigma|^G = |\Sigma|}} \lct(X, \frac1m|\Sigma|)$$ where the second infimum is taken over all $G$-invariant linear subsystems $|\Sigma| \subset |mL|$.
\end{rmk}

\begin{prop}\label{prop:Alpha} The definition of $\alpha_G(X, L)$ given in Remark \ref{rmk:DefAlpha} is equivalent to Definition \ref{defi:MainDefi}.
\end{prop}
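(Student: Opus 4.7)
The plan is to establish the equality of the two definitions of $\alpha_G(X,L)$ by proving two inequalities, imitating the Blum--Jonsson valuative computation of $\alpha$ while respecting $G$-invariance. Let $\alpha_G^{\mathrm{lin}}$ denote the linear-system definition from Remark \ref{rmk:DefAlpha} and $\alpha_G^{\mathrm{val}}$ the valuative one from Definition \ref{defi:MainDefi}.

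For the inequality $\alpha_G^{\mathrm{lin}} \leq \alpha_G^{\mathrm{val}}$, given $v = c\cdot \ord_E \in \dival^G_X$ with $E$ a $G$-invariant prime divisor on a $G$-equivariant model $f\colon Y\to X$, I would consider for each $m$ with $mL$ Cartier and each $k \geq 0$ the subspace $V_{m,k} = \{s \in H^0(X, \OO_X(mL)) \mid \ord_E(f^*s) \geq k\}$, which is $G$-invariant because $E$ is. A general $D$ in the associated $G$-invariant linear subsystem $|V_{m,k}| \subset |mL|$ satisfies $\ord_E(f^*D) \geq k$, so imposing log canonicity at $E$ gives $\lct(X, \tfrac{1}{m}|V_{m,k}|) \leq A_X(v)/(c\cdot k/m)$. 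The definition of $T_L(v)$ together with the homogeneity $T_L(c\cdot\ord_E) = c\cdot T_L(\ord_E)$ guarantees that $V_{m,k} \neq 0$ whenever $k/m$ is strictly less than $T_L(v)/c$ and $m$ is sufficiently divisible, so letting $k/m \to T_L(v)/c$ yields $\alpha_G^{\mathrm{lin}} \leq A_X(v)/T_L(v)$; an infimum over $v$ completes this direction.

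For the reverse inequality, fix a $G$-invariant subsystem $|\Sigma| \subset |mL|$ with base ideal $\mathfrak{b}$; the identity $\lct(X, \tfrac{1}{m}|\Sigma|) = m\cdot\lct(X, \mathfrak{b})$ reduces the task to producing a $G$-invariant divisorial valuation that computes $\lct(X, \mathfrak{b})$ for the $G$-invariant coherent ideal $\mathfrak{b}$. I would invoke a $G$-equivariant log resolution of $(X, \mathfrak{b})$, which is available thanks to Sumihiro's theorem combined with Hironaka's procedure, and then observe that a connected group $G$ cannot permute the irreducible components of its own stable divisors, so every exceptional or proper transform divisor $E_i$ lies in $\dival^G_X$. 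The valuative formula for $\lct$ of an ideal attains its infimum at some $v^* = \ord_{E_{i_0}} \in \dival^G_X$. Since every $D \in |\Sigma| \subset |mL|$ is effective and $\mathbb{Q}$-linearly equivalent to $mL$, we have $v^*(D)/m \leq T_L(v^*)$, whence $v^*(\mathfrak{b})/m \leq T_L(v^*)$ and $\lct(X, \tfrac{1}{m}|\Sigma|) = A_X(v^*)/(v^*(\mathfrak{b})/m) \geq A_X(v^*)/T_L(v^*) \geq \alpha_G^{\mathrm{val}}$. Taking infima over $m$ and $|\Sigma|$ gives the conclusion.

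I expect the main obstacle to be the second step: producing a $G$-invariant divisorial minimizer for the lct of a $G$-invariant ideal rests on the existence of a $G$-equivariant log resolution together with the irreducibility argument above, both of which require careful invocation (and explain why one needs connectedness of $G$). The first step is more mechanical, needing only the standard asymptotic $h^0(mf^*L - kE) \sim \Vol(f^*L - (k/m)E) \cdot m^n / n!$ to ensure that the constructed subsystems $V_{m,k}$ are nonempty in the appropriate range of $k/m$.
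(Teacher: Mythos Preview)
Your proposal is correct and follows essentially the same route as the paper: the paper likewise uses a $G$-equivariant log resolution (together with connectedness of $G$) to obtain a $G$-invariant divisorial valuation computing $\lct(X,\mathfrak{b}_\Sigma)$, and likewise uses the $G$-invariant subsystems $\{D\in|mL|:v(D)\geq\lambda\}$ to match $T_L(v)$ with the supremum of $\tfrac1m v(\mathfrak{b}_\Sigma)$. The only difference is organizational---the paper packages these two ingredients into the single identity $\sup_{m,\,|\Sigma|}\tfrac1m v(\mathfrak{b}_\Sigma)=T_L(v)$ and then switches the order of infima, whereas you separate the argument into two inequalities.
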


\begin{proof} By definition from Remark \ref{rmk:DefAlpha}, we have $\alpha_G(X, L) = \inf_{m \in \mathbb{N}}\alpha_{G, m}(X, L)$ where $$\alpha_{G, m}(X, L) = \inf_{\substack{|\Sigma| \subset |mL| \\ |\Sigma|^G = |\Sigma|}} \lct(X, \frac1m|\Sigma|).$$ Note that $\lct(X, |\Sigma|)$ is defined as the log canonical threshold of the base ideal $\mathfrak{b}_{\Sigma}$ of $|\Sigma|$. Since $|\Sigma|$ is $G$-invariant, the base ideal is also $G$-invariant. There exists a divisorial valuation computing $\lct(X, \mathfrak{b}_{\Sigma})$ by \cite[Section 1.7]{BlJ20}; moreover, by $G$-equivariant log resolution (see e.g. \cite[Section 3.9.1]{Kol07}) and connectedness of $G$ this valuation can be chosen to be $G$-invariant. Note also that for $v \in \Val^G_X$ we have $$\sup_{m \in \mathbb{N}} \sup_{\substack{|\Sigma| \subset |mL| \\ |\Sigma|^G = |\Sigma|}}\frac1m v(\mathfrak{b}_{\Sigma}) 
= T_{L}(v).$$ Indeed, by definition, $T(v) = \sup_{m \in \mathbb{N}} \sup_{D \in |mL|}\frac1m v(D)$. For any $D_0 \in |mL|$ with $v(D_0) = \lambda$ the linear system $$|\Sigma| = \{D \in |mL| \mid v(D) \geqslant \lambda/m\}$$ is nonempty and $G$-invariant by invariance of $v$; moreover, $$v(\frac1m |\Sigma|) = \inf_{D \in \mid \Sigma|}v(D) \geqslant \lambda/m.$$ The reverse inequality is obvious. Expanding the definitions and switching the order of infima we can write $$\inf_{m \in \mathbb{N}} \inf_{\substack{|\Sigma| \subset |mL| \\ |\Sigma|^G = |\Sigma|}} \lct(X, \frac1m|\Sigma|) = \inf_{m \in \mathbb{N}} \inf_{\substack{|\Sigma| \subset |mL| \\ |\Sigma|^G = |\Sigma|}}\inf_{v \in \Val^G_X}\frac{A_X(v)}{\frac1m v(\mathfrak{b}_{\Sigma})} = \inf_{v \in \Val^G_X}\frac{A_X(v)}{T_{L}(v)}$$ and thus we obtain the equivalence of two definitions.
\end{proof}

\begin{prop}\label{prop:Properties} Let $(X, L)$ be a variety with a big $\mathbb{Q}$-divisor and let $G \subset \aut(X, L)$ be a closed connected subgroup. We have the following inequalities for $\alpha_G(X, L)$ and $\delta_G(X, L)$ where $X$ has dimension ~$n$: $$0 < \alpha_G(X, L) \leqslant \delta_G(X, L) \leqslant (n+1)\alpha_G(X, L).$$ If, moreover, $L$ is ample then we have a stronger inequality $$\alpha_G(X, L)(1 + \frac1n) \leqslant \delta_G(X).$$
\end{prop}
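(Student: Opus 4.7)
The plan is to reduce each inequality to a pointwise comparison between $A_X(v)$, $T_L(v)$ and $S_L(v)$ on individual divisorial $G$-invariant valuations, and then take the infimum over $\dival^G_X$. The $G$-equivariance plays no further role: what is needed is purely the interaction between the log discrepancy, the pseudoeffective threshold and the expected vanishing order on a single valuation, already built into Definition \ref{defi:MainDefi}.

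The easy bounds come first. Positivity $\alpha_G(X,L) > 0$ follows from the trivial inclusion $\dival^G_X \subset \dival_X$ combined with the standard positivity $\alpha(X,L) > 0$ for a big $\mathbb{Q}$-divisor on a klt variety (an Izumi-type bound, see \cite[Section 4]{BlJ20}). The inequality $\alpha_G \leqslant \delta_G$ is immediate from $S_L(v) \leqslant T_L(v)$: since $\Vol(f^*L - tE) \leqslant \Vol(L)$ for every $t \geqslant 0$,
$$S_L(v) = \frac{1}{\Vol(L)}\int_0^{T_L(v)}\Vol(f^*L-tE)\, dt \leqslant T_L(v),$$
so $A_X(v)/S_L(v) \geqslant A_X(v)/T_L(v)$ pointwise, and the inequality follows. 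For the bound $\delta_G \leqslant (n+1)\alpha_G$, I would use concavity of $t \mapsto \Vol(f^*L - tE)^{1/n}$ on $[0, T_L(v)]$, which is the restriction to a line of Brunn--Minkowski-type concavity of $\Vol^{1/n}$ on the big cone (cf.\ \cite[Section 2.2.C]{Laz04}). This function equals $\Vol(L)^{1/n}$ at $t = 0$ and vanishes at $t = T_L(v)$, so it dominates its chord:
$$\Vol(f^*L - tE)^{1/n} \geqslant \left(1 - \frac{t}{T_L(v)}\right)\Vol(L)^{1/n}.$$
Raising to the $n$-th power and integrating from $0$ to $T_L(v)$ yields $S_L(v) \geqslant T_L(v)/(n+1)$, hence $A_X(v)/S_L(v) \leqslant (n+1)\, A_X(v)/T_L(v)$, and the desired inequality follows by taking the infimum.

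For the ample case I would invoke the Fujita estimate: for $L$ ample and any divisorial valuation $v$, $S_L(v) \leqslant \frac{n}{n+1} T_L(v)$ (see e.g.\ \cite[Lemma 2.2]{Fuj19a} or the Newton--Okounkov body discussion in \cite[Section 3]{BlJ20}). The argument identifies $S_L(v)$ with the first-coordinate barycenter of a Newton--Okounkov body $\Delta \subset \mathbb{R}^n_{\geqslant 0}$ of $f^*L$ with respect to an admissible flag whose first member is a prime divisor representing $v$. Brunn--Minkowski applied to the slice function $f(t) = \mathrm{vol}_{n-1}(\Delta \cap \{y_1 = t\})$ gives concavity of $f^{1/(n-1)}$, which forces the barycenter to lie at most $\frac{n}{n+1}$ of the way along the interval $[0, T_L(v)]$, with equality only for a cone with apex in $\{y_1 = 0\}$. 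Dividing by $A_X(v)$ and taking the infimum over $\dival^G_X$ yields $\alpha_G(X,L)(1 + 1/n) \leqslant \delta_G(X,L)$.

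The main obstacle is precisely the last step. The elementary concavity argument supplies only the lower bound $S_L \geqslant T_L/(n+1)$; the complementary sharp upper bound is genuinely finer and requires Newton--Okounkov body technology together with Brunn--Minkowski for slice volumes, a setup available for ample divisors but not for general big $L$. Once these pointwise comparisons are established for $v \in \dival_X$, all four inequalities follow automatically from Definition \ref{defi:MainDefi} by restricting the infimum to $\dival^G_X$, with no additional equivariant input needed.
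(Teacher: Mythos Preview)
Your proposal is correct and follows essentially the same approach as the paper: both reduce the inequalities to the pointwise comparisons $\frac{1}{n+1}T_L(v) \leqslant S_L(v) \leqslant T_L(v)$ (and $S_L(v) \leqslant \frac{n}{n+1}T_L(v)$ in the ample case) on individual valuations, then take the infimum over $\dival^G_X$. The paper's proof is two sentences citing \cite[Lemma 2.6]{BlJ20} and \cite[Proposition 2.1]{Fuj19b} for exactly these comparisons, whereas you unpack the underlying arguments (concavity of $\Vol^{1/n}$, the Newton--Okounkov body barycenter bound); in particular your reference to \cite{Fuj19a} for the Fujita estimate is parallel to the paper's citation of \cite{Fuj19b}.
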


\begin{proof} The first inequalities follow from \cite[Lemma 2.6]{BlJ20}. The second inequality follows from the fact that $\frac{S_L(v)}{T_L(v)} \leqslant \frac{n}{n+1}$ for $L$ an ample $\mathbb{Q}$-divisor and all valuations $v \in \Val_X$ \cite[Proposition 2.1]{Fuj19b}.
\end{proof}

\section{Equivariant $K$-stability} In this section we collect basic information on $G$-equivariant $K$-stability for a variety $X$ with an ample polarization $L$. Then we prove the main Theorem \ref{thm:Main1} (see Theorem \ref{thm:Main} below). 

We call a pair $(X, L)$ with $X$ projective and $L$ an ample $\QQ$-divisor a polarized variety.

\subsection{Basic definitions related to $K$-stability} The following definitions are well-known and can be found e.g. in \cite{Ti97, Don02, BHJ17}.

\begin{defi} A test configuration for a polarized variety $(X, L)$ with $L$ ample is a pair $(\XX, \LL)$ consisting of a variety $\XX$ with a projective surjective morphism $\pi \colon \XX \to \mathbb{A}^1$ and a $\pi$-semiample $\QQ$-line bundle $\LL$ together with a $\mathbb{G}_m$-action on $\XX$ preserving the class of $\LL$ such that \begin{itemize} \item The morphism $\pi \colon \XX \to \mathbb{A}^1$ is $\mathbb{G}_m$-equivariant with respect to the given action on $\XX$ and the multiplicative action on $\mathbb{A}^1$; \item The pair $(\XX \setminus \XX_0, \LL|_{\XX \setminus \XX_0})$ is $\mathbb{G}_m$-equivariantly isomorphic to $(X \times (\mathbb{A}^1 \setminus \{0\}), p_1^*L)$.\end{itemize}
\end{defi}

\begin{defi}[see e. g. \cite{OS12, DS16}] Let $G$ be a closed subgroup of $\aut(X, L)$. A test configuration $(\XX, \LL)$ is called $G$-equivariant if there exists an action of $G$ on the pair $(\XX, \LL)$ which commutes with the $\mathbb{G}_m$-action and restricts to the given $G$-action on $(\XX_t, \LL|_{\XX_t}) \simeq (X, L)$ for $t \neq 0$.
\end{defi}

\begin{defi} Let $(\XX, \LL)$ be a test configuration for $(X, L)$. The compactification $(\bar \XX, \bar \LL)$ of $(\XX, \LL)$ is the degeneration of $(X, L)$ over $\mathbb{P}^1$ defined by gluing $(\XX, \LL)$ with $(X \times (\mathbb{P}^1 \setminus \{0\}), p_1^*L)$ along the subset $X \times (\mathbb{P}^1 \setminus \{0, 1\})$. The compactification is $G$-equivariant if $(\XX, \LL)$ is.
\end{defi}

\begin{defi}[see e. g. \cite{Der16, LX14}] A test configuration $(\XX, \LL)$ is trivial if the relative canonical model $(\XX^{\can}, \LL^{\can})$ of the normalization of $(\XX, \LL)$ is $\mathbb{G}_m$-equivariantly isomorphic to $(X \times \mathbb{A}^1, p_1^*L + c\XX_0)$ for some $c \in \mathbb{Q}$. We say that $(\XX, \LL)$ is of product type if $\XX^{\can}$ is isomorphic to $X \times \mathbb{A}^1$.
\end{defi}

\begin{defi}[\cite{BHJ17, Der16, LX14}] Consider a test configuration $(\XX, \LL)$ for $(X, L)$. Let us denote by $\mathcal{Z}$ the normalization of the graph of the map $\bar \XX \dasharrow X \times \mathbb{P}^1$; it has natural maps $\pi \colon \mathcal{Z} \to X \times \mathbb{P}^1$ and $\varphi \colon \mathcal{Z} \to \XX$. We define the following invariants: $$\lambda_{\max}(\XX, \LL) = \frac{(p_1 \circ \pi)^*L^n \cdot \varphi^*\bar \LL}{L^n} \qquad \mbox{and} \qquad  J^{NA}(\XX, \LL) = \lambda_{\max}(\XX, \LL) - \frac{\bar \LL^{n+1}}{(n+1)L^n}$$ where the latter is called the norm of $(\XX, \LL)$. Also for a normal test configuration $(\XX, \LL)$ for a $\mathbb{Q}$-Fano variety $(X, -K_X)$ we define the Donaldson--Futaki invariant $\mathrm{DF}(\XX, \LL)$ by $$\mathrm{DF}(\XX, \LL) = \frac{n}{n + 1}\cdot \frac{\bar \LL^{n+1}}{(-K_X)^n} + \frac{\bar \LL^n \cdot K_{\XX / \mathbb{P}^1}}{(-K_X)^n}.$$ We also define the Ding invariant of $(\XX, \LL)$ as follows. Let $D_{(\XX, \LL)}$ be the unique $\mathbb{Q}$-divisor defined by the conditions $\mathrm{Supp}(D_{(\XX, \LL)}) \subset \XX_0$ and $D_{(\XX, \LL)} \sim_{\mathbb{Q}} -K_{\XX/\mathbb{P}^1} - \bar \LL$. The Ding invariant $\mathrm{Ding}(\XX, \LL)$ is defined by $$\mathrm{Ding}(\XX, \LL) = -\frac{\bar \LL^{n+1}}{(n+1)(-K_X)^n} - 1 + \lct(\XX, D_{(\XX, \LL)}; \XX_0).$$
\end{defi}

\begin{rmk} The crucial property of the norm $J^{NA}$ is the following: this function is nonnegative, and it vanishes on $(\XX, \LL)$ if and only if the normalization of $(\XX, \LL)$ is a trivial test configuration (for a proof see e. g. \cite[Theorem 1.3]{Der16}). 
\end{rmk}

\begin{rmk} Examples of product-type test configurations which are not trivial are given by one-parameter subgroups $\mathbb{G}_m \subset \aut(X)$ acting diagonally on $\XX = X \times \mathbb{A}^1$.
\end{rmk}

\begin{defi}\label{defi:Kstab} A $\mathbb{Q}$-Fano variety $(X, -K_X)$ is called \begin{itemize} \item $K$-semistable if $\mathrm{DF}(\XX, \LL) \geqslant 0$ for every normal test configuration $(\XX, \LL)$ for $(X, -K_X)$; \item $K$-polystable if $\mathrm{DF}(\XX, \LL) \geqslant 0$ for every normal test configuration $(\XX, \LL)$ and $\mathrm{DF}(\XX, \LL) = 0$ only if $(\XX, \LL)$ is of product type; \item $K$-stable if $\mathrm{DF}(\XX, \LL) \geqslant 0$ for every normal test configuration $(\XX, \LL)$ and $\mathrm{DF}(\XX, \LL) = 0$ only if $(\XX, \LL)$ is trivial; \item uniformly $K$-stable if there exists $\varepsilon > 0$ such that $\mathrm{DF}(\XX, \LL) \geqslant \varepsilon J^{NA}(\XX, \LL)$ for every normal test configuration $(\XX, \LL)$ for $(X, -K_X)$.\end{itemize} Analogous notions for Ding-stability are defined in the same way with $\mathrm{DF}(\XX, \LL)$ replaced by $\mathrm{Ding}(\XX, \LL)$. For a closed subgroup $G \subset \aut(X)$ we say that $(X, -K_X)$ is $G$-equivariantly $K$-stable (or $K$-stable along $G$-equivariant test configurations) if the corresponding inequalities hold for $G$-equivariant test configurations; the same for uniform $K$- or Ding-stability.
\end{defi}

\begin{rmk} \label{rmk:Stab} We have the obvious implications: $$ \mbox{uniform $K$-stability $\Rightarrow$ $K$-stability $\Rightarrow$ $K$-polystability $\Rightarrow$ $K$-semistability}$$ and the same ones for $G$-equivariant stability. Note also that $K$-stability implies that $\aut(X)$ is discrete, whereas $K$-polystability allows $\aut(X)$ to be infinite. As a consequence, uniform $K$-stability is not equivalent to existence of K\"ahler--Einstein metrics. In fact, $G$-equivariant uniform $K$-stability is also stronger than being K\"ahler--Einstein in general.
\end{rmk}

\subsection{The main result} Let us now state and prove the main result of this work (Theorem \ref{thm:Main1} from the introduction).

\begin{thm}\label{thm:Main} Let $(X, L) = (X, -K_X)$ be a $\QQ$-Fano variety with the anticanonical polarization and let $G \subset \aut(X)$ be a connected subgroup. Then the following conditions are equivalent: \begin{enumerate} 
\item The variety $X$ is uniformly $K$-stable (resp. $K$-semistable) along $G$-equivariant test configurations;
\item The variety $X$ is uniformly Ding-stable (resp. Ding-semistable) along $G$-equivariant test configurations;
\item We have the inequality $\delta_G(X) > 1$ (resp. $\delta_G(X) \geqslant 1$).
\end{enumerate}
\end{thm}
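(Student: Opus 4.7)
The plan is to mimic the non-equivariant proof of Blum--Jonsson \cite{BlJ20} and Fujita \cite{Fuj19a}, tracking $G$-equivariance step by step, via the cycle $(1) \Rightarrow (2) \Rightarrow (3) \Rightarrow (1)$. The implication $(1) \Rightarrow (2)$ is formal, since the universal inequality $\mathrm{Ding}(\XX, \LL) \leqslant \mathrm{DF}(\XX, \LL)$ for normal test configurations of a $\mathbb{Q}$-Fano variety restricts without change to $G$-equivariant ones.

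For $(2) \Rightarrow (3)$, given any $v \in \dival^G_X$, I would use $G$-equivariant resolution of singularities (valid because $G$ is connected) to represent $v$ as a positive multiple of $\ord_E$ for a $G$-invariant prime divisor $E$ on some $G$-equivariant birational model $\varphi \colon Y \to X$. Equivariant deformation to the normal cone of $E$ in $Y$, together with an appropriate twist of the polarization by $-E$, produces a normal $G$-equivariant test configuration $(\XX_v, \LL_v)$ associated to $v$. Following Fujita, one computes
\[
\mathrm{Ding}(\XX_v, \LL_v) = \frac{A_X(v) - S_{-K_X}(v)}{(-K_X)^n}, \qquad J^{NA}(\XX_v, \LL_v) = \frac{T_{-K_X}(v) - S_{-K_X}(v)}{(-K_X)^n}.
\]
Uniform Ding-stability along $G$-equivariant configurations then forces $A_X(v)/S_{-K_X}(v) \geqslant 1 + \epsilon'$ uniformly in $v$ (using $T_{-K_X}/S_{-K_X} \leqslant (n+1)/n$ from Proposition~\ref{prop:Properties}), so $\delta_G(X) > 1$. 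The $K$-semistable variant is the same argument with $\epsilon = 0$.

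The main direction is $(3) \Rightarrow (1)$. To any $G$-equivariant normal test configuration $(\XX, \LL)$ associate the $\mathbb{Z}$-filtration $F^{\bullet}$ on $R = \bigoplus_m H^0(X, -mK_X)$ induced by orders of vanishing along the central fiber; since the degenerating $\mathbb{G}_m$-action commutes with $G$, the filtration $F^{\bullet}$ is $G$-invariant. Consequently each base ideal of $F^{\lambda} H^0(X, -mK_X)$ is $G$-invariant, so by a $G$-equivariant log resolution its log canonical threshold is computed by some $v \in \dival^G_X$. Plugging this into the valuative expression for $\delta_m$ and carrying out the asymptotic analysis of \cite[Theorem 4.4]{BlJ20} with all infima restricted to $\dival^G_X$, one arrives at
\[
\mathrm{Ding}(\XX, \LL) \geqslant \left(1 - \frac{1}{\delta_G(X)}\right) J^{NA}(\XX, \LL),
\]
from which uniform Ding-stability (and hence, via $\mathrm{DF} \geqslant \mathrm{Ding}$, uniform $K$-stability) along $G$-equivariant configurations follows when $\delta_G(X) > 1$. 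The semistable version follows from the same inequality with weak $\geqslant$.

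The main obstacle I foresee is establishing the $G$-equivariant versions of the two key ingredients of \cite{BlJ20}: first, the asymptotic identity $\delta_G(X) = \lim_m \delta_{G, m}(X)$ for the natural $G$-equivariant $m$-th invariants; second, the control of $\mathrm{Ding}(\XX, \LL)$ by the $G$-invariant filtration the configuration induces on $R$. Both reduce to the statement that log canonical thresholds of $G$-invariant ideals are computed by valuations in $\dival^G_X$, which follows from $G$-equivariant log resolution and connectedness of $G$ (this is exactly the mechanism used to prove Proposition~\ref{prop:DivDense} and Proposition~\ref{prop:Alpha}), but the asymptotic comparison between $F^{\bullet}$ and its graded pieces requires exploiting that each $H^0(X, -mK_X)$ is a finite-dimensional rational representation of the linear algebraic group $G$. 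Once these equivariant analogues are in place, the remainder of the proof is a direct transcription of the Blum--Jonsson argument.
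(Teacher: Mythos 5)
Your cycle $(1)\Rightarrow(2)\Rightarrow(3)\Rightarrow(1)$ breaks at the very first step. The universal inequality for normal test configurations of a $\mathbb{Q}$-Fano variety is $\mathrm{Ding}(\XX,\LL)\leqslant \mathrm{DF}(\XX,\LL)$, and from $\mathrm{DF}\geqslant \varepsilon J^{NA}$ together with $\mathrm{Ding}\leqslant\mathrm{DF}$ you cannot conclude $\mathrm{Ding}\geqslant \varepsilon J^{NA}$: the formal consequence of Berman's inequality is $(2)\Rightarrow(1)$, not $(1)\Rightarrow(2)$. Since your argument for $(3)\Rightarrow(1)$ also only produces a lower bound on $\mathrm{Ding}$ (and then passes to $\mathrm{DF}$ via $\mathrm{DF}\geqslant\mathrm{Ding}$), nothing in the proposal ever bounds anything \emph{from above} by a Donaldson--Futaki invariant. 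The net result is $(2)\Leftrightarrow(3)$ and $(3)\Rightarrow(1)$, with no way back from $(1)$. The missing ingredient is precisely the $G$-equivariant form of the Li--Xu reduction to special test configurations: starting from an arbitrary $G$-equivariant normal $(\XX,\LL)$, one runs an equivariant log resolution, base change, and relative MMP with scaling of a $G$-invariant ample class (using that $\overline{\mathrm{NE}}(\bar\XX)=\overline{\mathrm{NE}}(\bar\XX)^G$ for $G$ connected) to reach a $G$-equivariant special test configuration $(\XX^s,\LL^s)$ with $\mathrm{DF}(\XX^s,\LL^s)\leqslant d\cdot\mathrm{DF}(\XX,\LL)$ and $\mathrm{DF}(\XX^s,\LL^s)=\mathrm{Ding}(\XX^s,\LL^s)=A_X(v_{\XX^s_0})-S_L(v_{\XX^s_0})$ for a $G$-invariant divisorial valuation. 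This is how the paper obtains both $(1)\Rightarrow(2)$ and $(3)\Rightarrow(1)$, and it cannot be bypassed by the filtration argument as you have set it up.

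Two further remarks. First, your route for $(3)\Rightarrow(1)$ (degenerating to the $G$-invariant filtration induced by $\XX_0$ and running the Blum--Jonsson asymptotics with infima restricted to $\dival^G_X$) is genuinely different from the paper's MMP reduction; it could in principle yield the quantitative bound $\mathrm{Ding}\geqslant(1-\delta_G^{-1})J^{NA}$ for \emph{all} $G$-equivariant configurations rather than only special ones, but it requires the $G$-equivariant analogue of $\delta=\lim_m\delta_m$ and of Fujita's filtration bound on the Ding invariant, which you flag but do not supply; equivariant log resolution of $G$-invariant ideals alone does not give the asymptotic identity. Second, in $(2)\Rightarrow(3)$ a single deformation to the normal cone does not have $\mathrm{Ding}=A_X(v)-S_{-K_X}(v)$ on the nose: Fujita's argument needs the whole sequence of flag-ideal blow-ups $(\XX^r,\LL^r)$ and a limit $r\to\infty$ to recover $S_{-K_X}(v)$ (which is an integral of volumes, not an intersection number on one model). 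This is repairable along the lines of Proposition \ref{prop:Fujita1}, but as written the identities are not correct for the object you construct.
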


 The proof of Theorem \ref{thm:Main} mainly follows \cite[Theorem 1.4]{Fuj19a} and \cite{LX14}. We divide the proof into Propositions \ref{prop:Fujita1} and \ref{prop:Fujita2} below. Our goal is to check that the constructions of test configurations used in \cite[Theorem 4.1]{Fuj19a} and \cite[Theorem 4]{LX14} can be made $G$-equivariantly, for $G \subset \aut(X)$ a connected subgroup. The next proposition uses the ``blowing up'' techniques to check $K$-stability (see \cite{Od13, OS12, Der16}) in $G$-equivariant setting.

\begin{prop}\label{prop:Fujita1} Let $(X, L) = (X, -K_X)$ be a $\QQ$-Fano variety and $G \subset \aut(X)$ a closed connected subgroup; assume that for all $G$-equivariant normal test configurations $(\XX, \LL)$ for $(X, -K_X)$ we have the inequality $$\mathrm{Ding}(\XX, \LL) \geqslant \varepsilon J^{NA}(\XX, \LL)$$ for some $\varepsilon > 0$. Then we have for every $G$-invariant divisorial valuation on $\mathbb{C}(X)$ the inequality $$\frac{A_X(v)}{S_{-K_X}(v)} \geqslant \frac{1}{1 - \varepsilon}.$$ In particular, we have $\delta_G(X) > 1$.
\end{prop}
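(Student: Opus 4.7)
The plan is to follow Fujita's valuative argument \cite[Theorem 4.1]{Fuj19a} in the $G$-equivariant setting. Given $v \in \dival^G_X$, after rescaling (which preserves the ratio $A_X(v)/S_{-K_X}(v)$) we may write $v = \ord_E$ for a $G$-invariant prime divisor $E$ on a $G$-equivariant birational model of $X$; by $G$-equivariant log resolution of singularities \cite[Section 3.9.1]{Kol07} we may further assume that $E$ is a smooth prime divisor on a smooth $G$-equivariant model $\mu \colon Y \to X$.

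Next I would associate to $v$ a $G$-equivariant normal test configuration $(\XX_v, \LL_v)$ for $(X, -K_X)$ via the Rees-type construction on the $G$-invariant graded family of ideals $\mathfrak{a}_\bullet = (\mathfrak{a}_k)_{k \geqslant 0}$ on $X$, with $\mathfrak{a}_k = \{ f \in \OO_X : v(f) \geqslant k \}$. Because $v$ is $G$-invariant, every $\mathfrak{a}_k$ is $G$-invariant, so the normalized blowup of $X \times \mathbb{A}^1$ along $\sum_k \mathfrak{a}_k \cdot t^{-k}$ carries commuting $G$- and $\mathbb{G}_m$-actions (the latter along the $\mathbb{A}^1$-factor, the former trivial on $\mathbb{A}^1$). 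Compactifying $G$-equivariantly over $\mathbb{P}^1$ and polarizing via the pullback of $-K_X$ together with exceptional corrections yields the desired $(\XX_v, \LL_v)$.

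Fujita's computations then give precise control on $\mathrm{Ding}(\XX_v, \LL_v)$ and on the norm $J^{NA}(\XX_v, \LL_v)$ purely in terms of $A_X(v)$ and $S_{-K_X}(v)$: combining his formulas with the assumed uniform Ding-stability inequality $\mathrm{Ding}(\XX_v, \LL_v) \geqslant \varepsilon J^{NA}(\XX_v, \LL_v)$, and (if necessary) passing to the limit along an approximation of $v$ by plt-type valuations, produces the key inequality $A_X(v) - S_{-K_X}(v) \geqslant \varepsilon A_X(v)$. Rearranging this gives $A_X(v)/S_{-K_X}(v) \geqslant 1/(1-\varepsilon)$; since $v \in \dival^G_X$ was arbitrary, taking the infimum yields $\delta_G(X) \geqslant 1/(1-\varepsilon) > 1$.

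The main obstacle is keeping every intermediate object in the $G$-equivariant category: the log resolution, the graded sequence $\mathfrak{a}_\bullet$, the blow-up and its normalization, the compactification over $\mathbb{P}^1$, and the chosen polarization. Connectedness of $G$ is essential here --- it underlies the existence of $G$-equivariant resolution, and via Sumihiro's theorem (Theorem \ref{thm:Sumihiro}) it guarantees that any approximation step (e.g. replacing a general divisorial valuation by a $G$-invariant plt-type one in order to apply Fujita's formulas cleanly) can be performed inside the $G$-invariant category. Once $G$-equivariance of the construction is established, the numerical formulas depend only on $A_X(v)$ and $S_{-K_X}(v)$ and transfer directly from the non-equivariant setting.
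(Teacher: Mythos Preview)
Your approach is essentially the same as the paper's: both arguments start from a $G$-invariant divisorial valuation $v=\ord_E$, produce from it a $G$-invariant graded sequence of ideals on $X$ (equivalently, a $G$-invariant filtration on $R(X,-r_0K_X)$), blow up the associated ideal on $X\times\mathbb{A}^1$ to obtain $G$-equivariant test configurations, and then invoke Fujita's numerical computations \cite[Theorem~4.1]{Fuj19a}, \cite[Claims~2.4,~2.5]{Fuj19b} together with the assumed uniform Ding inequality to extract $A_X(v)/S_{-K_X}(v)\geqslant 1/(1-\varepsilon)$.

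Two minor points where the paper is more precise than your sketch. First, rather than blowing up the infinite Rees-type ideal $\sum_k\mathfrak a_k t^{-k}$ directly, the paper works with Odaka's \emph{flag ideals} $\II_r$, which are finite truncations depending on a parameter $r$; one obtains a \emph{sequence} of $G$-equivariant test configurations $(\XX^r,\LL^r)$ and passes to the limit $r\to\infty$ in the numerical identities. Second, the plt-type approximation you mention (and the appeal to Sumihiro's theorem) is not needed here: once $v$ is $G$-invariant, every ideal in the construction is automatically $G$-invariant and the blowup inherits a $G$-action, so no further approximation inside the $G$-invariant category is required. The intermediate inequality you wrote, $A_X(v)-S_{-K_X}(v)\geqslant\varepsilon A_X(v)$, is not exactly what drops out of Fujita's limiting formulas (the norm term involves $j(v)$ rather than $A_X(v)$), but after the standard comparisons one does arrive at the stated bound on $A_X(v)/S_{-K_X}(v)$.
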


\begin{proof} Let us first outline the idea of the proof of \cite[Theorem 4.1]{Fuj19a}. Starting from a divisorial valuation $v = \ord_E$, we construct a sequence of test configurations $(\XX^r, \LL^r)$ using {\em flag ideals} (introduced in \cite{Od13}) associated to the valuation $v$. Then from the computations in \cite[Claim 4.4]{Fuj19a} and \cite[Claim 2.5]{Fuj19b} we conclude that uniform boundedness from below of the Ding invariants $\mathrm{Ding}(\XX^r, \LL^r)$ implies the required bound on $A_X(v)/S(v)$.  We show that this construction can be made $G$-equivariantly provided that we start from a $G$-invariant divisorial valuation. We assume that $\Val^G_X$ is nonempty; otherwise there is nothing to prove.

Let $r_0 \in \mathbb{N}$ be the Cartier index of $K_X$. To a divisorial valuation $v = \ord_E$ where  $E$ is a $G$-stable prime divisor on a birational model $\varphi \colon Y \to X$ we associate the {\em order filtration} $\mathcal{F}_v$ on the graded algebra $$R(X, -r_0K_X) = \bigoplus_{m \in \mathbb{N}}H^0(X, \OO_X(-mr_0K_X)) = \bigoplus_{m \in \mathbb{N}}V_m.$$ The filtration is defined by $$\mathcal{F}_v^t V_m = H^0(Y,\OO_X( -mr_0\varphi^*K_X - tE)) \subset H^0(X, \OO_X(-mr_0K_X)).$$ It is a filtration by $G$-invariant linear subspaces of $R(X, -r_0K_X)$. To this filtration and to any given $t \in \mathbb{R}, m \in \mathbb{N}$ we can associate a nontrivial $G$-invariant ideal $$\II_{(m, t)} = \mathrm{Im}(\mathcal{F}^t_v V_m \otimes \OO_{X}(-mr_0K_X) \to \OO_X).$$ By \cite[Claim 4.2]{Fuj19a}, we have $$\mathcal{F}^t_v V_m = H^0(X, -mr_0K_X \cdot \II_{(m, t)}).$$ We now define for appropriate $e_{+}, e_{-} \in \mathbb{Z}$ and $r \in \mathbb{N}$ large enough (as in \cite[Theorem 4.1]{Fuj19a}) the flag ideal $\II_r \subset \OO_{X \times \mathbb{A}^1}$ by $$\II_r = \II_{(r, re_{+})} + \II_{(r, re_{+} - 1)} \cdot t + \cdots + \II_{(r, re_{-} + 1)}\cdot t^{r(e_{+} - e_{-}) - 1} + (t^{r(e_{+} - e_{-})}).$$ This ideal is invariant under the action of $G$ on $X\times \mathbb{A}^1$. We construct a test configuration by blowing up the flag ideal $\Phi_r \colon \mathrm{Bl}_{\II_r}(X \times \mathbb{A}^1) \to X \times \mathbb{A}^1$. Let $E_r$ be the exceptional divisor of the blow-up; we set $$\XX^r = \mathrm{Bl}_{\II_r}(X \times \mathbb{A}^1) \qquad \mbox{and} \qquad \LL^r = \Phi_r^*(-K_{X \times \mathbb{A}^1}) - \frac{1}{rr_0}E_r.$$ Then the $G$-action lifts to the blow-up leaving $\LL^r$ invariant; therefore $(\XX^r, \LL^r)$ is a $G$-equivariant test configuration for $(X, L)$. By assumption, for the (still $G$-equivariant) normalization $(\XX^{r, \nu}, \nu^*\LL^{r})$ of any test configuration $(\XX^r, \LL^r), r \in \mathbb{N}$ we have the inequality $$\mathrm{Ding}(\XX^{r, \nu}, \nu^*\LL^{r}) \geqslant \varepsilon J^{NA}(\XX^{r, \nu}, \nu^*\LL^{r})$$ for some fixed $\varepsilon > 0$. By the proof of \cite[Theorem 4.1]{Fuj19a} and \cite[Claims 2.4, 2.5]{Fuj19b} we can express $\mathrm{Ding}(\XX^{r, \nu}, \nu^*\LL^{r})$ and $J^{NA}(\XX^{r, \nu}, \nu^*\LL^{r})$ via the invariants of the valuation $v = \ord_E$ and obtain an inequality $$A_X(v)/S_{-K_X}(v) \geqslant 1/(1 - \varepsilon).$$ Since $v = \ord_E \in \dival^G_X$ was arbitrary, we obtain $$\delta_G(X, L) = \inf_{v \in \dival^G_X}\frac{A_X(v)}{S_L(v)} \geqslant \frac{1}{1 - \varepsilon} > 1$$ as it was to be shown.
\end{proof}

We recall here some important results of Li--Xu and Fujita. These results show that uniform $K$- or Ding-stability can be checked using only special test configurations. Moreover, the Donaldson-Futaki invariants of these test configurations can be expressed via invariants of valuations. 

\begin{prop}[{\cite[Theorem 4]{LX14} and \cite[Theorem 5.2]{Fuj19a}}] \label{prop:SpecialTC} Let $(\XX, \LL)$ be a normal test configuration for a $\mathbb{Q}$-Fano variety $(X, -K_X)$. Then there exist \begin{itemize} \item a finite base change $(\XX^{(d)}, \LL^{(d)}) \to (\XX, \LL)$; \item a $\mathbb{G}_m$-equivariant birational map $\XX^{(d)} \to \XX^s$ obtained by running a relative MMP with scaling of an ample divisor $\mathcal{H}$\end{itemize} such that the resulting test configuration $(\XX^s, \LL^s)$ is special, that is, $\XX^s_0$ is irreducible and reduced. Moreover, for every special test configuration $(\XX^s, \LL^s)$ obtained from $(\XX, \LL)$ the following properties hold true: \begin{itemize} \item [(a)]There exists $d \in \mathbb{N}$ such that $\mathrm{DF}(\XX^s, \LL^s) \leqslant d \cdot \mathrm{DF}(\XX, \LL)$ and such that for any $\varepsilon \in [0,1]$ we have $$\mathrm{Ding}(\XX^s, \LL^s) - \varepsilon J^{NA}(\XX^s, \LL^s) \leqslant d \cdot ( \mathrm{Ding}(\XX, \LL) - \varepsilon J^{NA}(\XX, \LL));$$  \item[(b)] The Donaldson--Futaki invariant $\mathrm{DF}(\XX^s, \LL^s)$ is equal to the Ding invariant of $(\XX^s, \LL^s)$ and can be expressed via the invariants of the divisorial valuation $v = v_{\XX^s_0}$ as follows: $$\mathrm{DF}(\XX^s, \LL^s) = A_X(v_{\XX^s_0}) - S_L(v_{\XX^s_0}).$$ \end{itemize}   
\end{prop}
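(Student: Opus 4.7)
The plan is to assemble this proposition from two ingredients already in the literature: the MMP-with-scaling construction of Li and Xu and the valuative computation of Fujita. For the construction of the special model $(\XX^s, \LL^s)$, I would invoke the procedure of \cite[Theorem 4]{LX14} essentially verbatim. Starting from a normal test configuration $(\XX, \LL)$, perform a finite base change $(\XX^{(d)}, \LL^{(d)}) \to (\XX, \LL)$ of degree $d$ on $\mathbb{A}^1$ chosen large enough so that after normalization the pulled-back central fiber has reduced structure. Then run a $\mathbb{G}_m$-equivariant relative log MMP over $\mathbb{A}^1$ with scaling of a sufficiently ample divisor $\mathcal{H}$. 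The combinatorics of the scaling MMP, applied to a log Fano degeneration, terminate with a model $\XX^s$ whose central fiber is irreducible and reduced, and $\LL^s$ is the limiting polarization under the scaling, still $\QQ$-linearly equivalent to $-K_{\XX^s/\mathbb{P}^1}$.

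For the Donaldson--Futaki inequality in part (a), the key observation is that under the base change of degree $d$ both $\mathrm{DF}$ and $J^{NA}$ scale linearly by $d$, while under each step of the relative MMP with scaling the intersection-theoretic computation of \cite[Theorem 4]{LX14} shows that $\mathrm{DF}$ is non-increasing and $J^{NA}$ is preserved. Concatenating these yields $\mathrm{DF}(\XX^s, \LL^s) \leqslant d \cdot \mathrm{DF}(\XX, \LL)$. To upgrade the statement to the Ding--$\varepsilon J^{NA}$ version, I would combine Berman's inequality $\mathrm{Ding}(\XX, \LL) \leqslant \mathrm{DF}(\XX, \LL)$, with equality for special test configurations, together with the compatible behavior of $\mathrm{Ding}$ along the steps of the MMP with scaling established in \cite{Fuj19a}: the change in $\mathrm{Ding}$ matches the change in $\mathrm{DF}$ modulo controlled multiples of $J^{NA}$, so the same constant $d$ suffices.

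Part (b) is then a direct valuative calculation, carried out in \cite[Theorem 5.2]{Fuj19a}. Since the special configuration has $\XX^s_0$ prime, the order of vanishing $v := \ord_{\XX^s_0}$ restricts (via the birational identification of the generic fiber of $\XX^s \to \mathbb{A}^1$ with $X$) to a divisorial valuation on $\mathbb{C}(X)$. The equality $\mathrm{DF}(\XX^s, \LL^s) = \mathrm{Ding}(\XX^s, \LL^s)$ follows from adjunction $K_{\XX^s/\mathbb{P}^1}|_{\XX^s_0} = K_{\XX^s_0}$ applied to the klt identification of the central fiber with $X$. Converting the intersection number $\bar{\LL}^{s,n+1}$ into the integral of volumes $\int_0^\infty \Vol(-K_X - t v)\,dt$, as in the Okounkov-body framework of \cite{BHJ17}, and using $A_X(v) = 1 + \ord_{\XX^s_0}(K_{\XX^s/X \times \mathbb{A}^1})$, one assembles the formula $\mathrm{DF}(\XX^s, \LL^s) = A_X(v) - S_{-K_X}(v)$.

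The main obstacle is the Ding-version of inequality (a): tracking $\mathrm{Ding}$ through the base change and the intermediate MMP steps is subtler than tracking $\mathrm{DF}$, since $\mathrm{Ding}$ involves a log canonical threshold rather than a purely intersection-theoretic expression. This is precisely where one needs to combine Berman's inequality with the structural rigidity of the scaling MMP for log Fano pairs, and it is this passage which makes \cite{Fuj19a} more than a formal corollary of \cite{LX14}.
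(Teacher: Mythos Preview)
The paper does not give its own proof of this proposition: it is stated as a quotation of \cite[Theorem~4]{LX14} and \cite[Theorem~5.2]{Fuj19a} and used as a black box. Your approach of assembling the statement from these two references is therefore exactly what the paper does, and the overall architecture of your sketch (base change plus MMP with scaling from Li--Xu for the construction and part~(a), Fujita's valuative computation for part~(b), Berman's inequality to pass between $\mathrm{DF}$ and $\mathrm{Ding}$ on special configurations) is correct.

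One point in your write-up is inaccurate and should be fixed. You assert that under each step of the relative MMP with scaling ``$J^{NA}$ is preserved.'' This is not true: the polarization $\LL$ genuinely changes along the scaling MMP (it is deformed toward $-K_{\XX/\mathbb{A}^1}$), so $J^{NA}$ changes as well. What Li--Xu actually prove is that $\mathrm{DF}$ is non-increasing along each step (after the degree-$d$ base change, which contributes the factor $d$). For the Ding version in part~(a), Fujita's refinement shows directly that the quantity $\mathrm{Ding} - \varepsilon J^{NA}$ is non-increasing for every $\varepsilon \in [0,1]$ along the same steps; one does not argue via separate control of $\mathrm{Ding}$ and $J^{NA}$. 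Your final paragraph correctly flags this as the delicate point, but the earlier sentence should be rephrased so as not to claim invariance of $J^{NA}$.
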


The next proposition says that one can pass from any $G$-equivariant test configuration to a $G$-equivariant {\em special} test configuration.

Note that for any test configuration $(\XX, \LL)$ the $\mathbb{G}_m$-equivariant birational map $\XX \dasharrow X \times \mathbb{A}^1$ gives an isomorphism $\mathbb{C}(\XX) = \mathbb{C}(X)(t)$. This isomorphism is $G$-equivariant if the test configuration $(\XX, \LL)$ is. The projection $X \times \mathbb{A}^1 \to X$ gives a $G$-equivariant embedding $\mathbb{C}(X) \subset \mathbb{C}(X)(t)$. If the central fiber $\XX_0$ is irreducible then the restriction of the divisorial valuation $v_{\XX_0}$ to $\mathbb{C}(X)$ is either divisorial or trivial (see e.g. \cite[Lemma 4.1]{BHJ17} or \cite[Proposition B.8]{Tim11}). 

\begin{prop}\label{prop:Fujita2} Let $(\XX, \LL)$ be a normal test configuration for a $\mathbb{Q}$-Fano variety $(X, -K_X)$. If $(\XX, \LL)$ is $G$-equivariant for a closed connected subgroup $G \subset \aut(X)$ then every special test configuration $(\XX^s, \LL^s)$ constructed from $(\XX, \LL)$ as in Proposition \ref{prop:SpecialTC} is $G$-equivariant and satisfies the properties (a) and (b) from Proposition \ref{prop:SpecialTC}. In particular, if $(\XX^s, \LL^s)$ is non-trivial then the restriction of the divisorial valuation $v_{\XX^s_0}$ to the subfield $$\mathbb{C}(X) \subset \mathbb{C}(X)(t) \simeq \mathbb{C}(\XX^s)$$ is a $G$-invariant divisorial valuation.
\end{prop}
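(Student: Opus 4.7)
The plan is to trace through the Li--Xu construction of $(\XX^s, \LL^s)$ recalled in Proposition \ref{prop:SpecialTC} and verify $G$-equivariance at each step; once equivariance is established, properties (a) and (b) hold automatically because they are the same numerical inequalities proved in Proposition \ref{prop:SpecialTC}, and the final statement about the valuation $v_{\XX^s_0}$ will be a formal consequence.

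First I would handle the finite base change $(\XX^{(d)}, \LL^{(d)}) \to (\XX, \LL)$, which is obtained by pulling back along $\mathbb{A}^1 \to \mathbb{A}^1$, $t \mapsto t^d$, and then normalizing. Since the $G$-action on $\XX$ commutes with $\pi$ by the definition of a $G$-equivariant test configuration, it extends to the fiber product (acting trivially on the second factor) and lifts uniquely to the normalization, and the pullback of $\LL$ inherits a $G$-linearization. Hence $(\XX^{(d)}, \LL^{(d)})$ is again $G$-equivariant.

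Next I would address the relative MMP with scaling of an ample divisor $\mathcal{H}$. The first point is that $\mathcal{H}$ can be chosen $G$-invariant: since $G$ is a connected linear algebraic group and $\XX^{(d)} \to \mathbb{A}^1$ is projective, a relatively ample line bundle on $\XX^{(d)}$ admits a $G$-linearization after passing to a sufficient tensor power. Given such a $\mathcal{H}$, the crucial observation is that $G$, being connected, acts trivially on $N^1(\XX^{(d)}/\mathbb{A}^1)$. Therefore every $(K_{\XX^{(d)}/\mathbb{A}^1} + t\mathcal{H})$-extremal ray is $G$-invariant, and the corresponding extremal contractions and flips are automatically $G$-equivariant; the contraction to the relative canonical model proceeds for the same reason. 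Thus the resulting special test configuration $(\XX^s, \LL^s)$ carries a $G$-action commuting with the $\mathbb{G}_m$-action, and properties (a) and (b) hold verbatim from Proposition \ref{prop:SpecialTC}.

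For the concluding assertion, the central fiber $\XX^s_0 = \pi^{-1}(0)$ is $G$-invariant as a divisor (because $G$ commutes with $\mathbb{G}_m$) and is prime by specialness, so $v_{\XX^s_0}$ is a $G$-invariant divisorial valuation on $\mathbb{C}(\XX^s)$. As noted just before the proposition, the $\mathbb{G}_m$-equivariant identification $\XX^s \setminus \XX^s_0 \simeq X \times (\mathbb{A}^1 \setminus \{0\})$ is $G$-equivariant with $G$ acting trivially on the $\mathbb{A}^1$-factor, giving a $G$-equivariant isomorphism $\mathbb{C}(\XX^s) \simeq \mathbb{C}(X)(t)$ in which $\mathbb{C}(X)$ is a $G$-stable subfield. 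Hence the restriction $v_{\XX^s_0}|_{\mathbb{C}(X)}$ is $G$-invariant, and it is divisorial (when the test configuration is nontrivial) by the lemma of \cite{BHJ17} cited above. The principal obstacle I anticipate is the MMP step: all the work lies in certifying that the choice of scaling divisor and of each extremal contraction can be made $G$-equivariantly, which reduces to $G$-linearizability of a relatively ample line bundle together with triviality of the action of a connected group on the Néron--Severi lattice.
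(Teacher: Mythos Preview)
Your proposal is correct and follows essentially the same route as the paper: both arguments verify $G$-equivariance step by step through the Li--Xu construction, using that a connected $G$ acts trivially on the N\'eron--Severi space (the paper phrases this dually via $\overline{\mathrm{NE}}(\bar\XX) = \overline{\mathrm{NE}}(\bar\XX)^G$, citing \cite{And01}) to make extremal contractions and flips $G$-equivariant, and then restrict the resulting $G$-invariant valuation $v_{\XX^s_0}$ to $\mathbb{C}(X)$. The only step the paper makes explicit that you leave implicit is the preliminary $G$-equivariant log resolution of $(\bar\XX, \bar\XX_0)$ (via \cite[Section 3.9.1]{Kol07}) before running the MMP, but this is standard and does not affect the argument.
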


\begin{proof} We start from a normal $G$-equivariant test configuration $(\XX, \LL)$ and compactify it to $(\bar \XX, \bar \LL)$. Taking an equivariant log resolution of the pair $(\bar \XX, \bar \XX_0)$ (by \cite[Section 3.9.1]{Kol07}) and a $G$-equivariant finite base change $t \mapsto t^d$ (by \cite[Lemma 5]{LX14}) we can assume that the pair $(\bar \XX, \bar \XX_0)$ is log canonical. 

Then by Proposition \ref{prop:SpecialTC} a special test configuration can be obtained from $(\bar \XX, \bar \XX_0)$ by running a relative $K_{\bar \XX/\PP^1}$-MMP with scaling of an ample $\mathbb{Q}$-divisor $\mathcal{H}$. We can take $\mathcal{H}$ such that the class of $\mathcal{H}$ lies in ~ $\Pic^G(\bar \XX)$. We recall that $\overline{\mathrm{NE}}(\bar \XX) = \overline{\mathrm{NE}}(\bar \XX)^{G}$ by \cite[Lemma 1.5]{And01} since $G$ is connected. Therefore for every divisorial extremal ray of $\overline{\mathrm{NE}(X)}$ of the Mori cone the contraction is $G$-equivariant. For flips, the statement follows since the $\mathrm{Proj}$ of an algebra with a $G$-action has an induced structure of a $G$-variety. 

Thus at the final step of the MMP we obtain a $G$-equivariant test configuration $(\bar \XX^s, \bar \LL^s)$ such that $\XX^s_0$ is a $G$-stable prime divisor. So Proposition \ref{prop:SpecialTC} applies to $(\XX^s, \LL^s)$ and ensures that the properties (a) and (b) are satisfied. Moreover, the valuation $v_{\XX^s_0}$ on $\mathbb{C}(\XX^s)$ is $G$-invariant. If $(\XX^s, \LL^s)$ is nontrivial then the restriction of $v_{\XX^s_0}$ to $\mathbb{C}(X)$ is divisorial by \cite[Proposition B.8]{Tim11} or \cite[Lemma 4.1]{BHJ17}and invariant by the induced $G$-action on $X$.
\end{proof}
 
\begin{proof}[Proof of Theorem \ref{thm:Main}] The implication $(2) \Rightarrow (3)$ is precisely Proposition \ref{prop:Fujita1}. To show $(3) \Rightarrow (1)$ we use Propositions \ref{prop:SpecialTC} and \ref{prop:Fujita2} in order to pass from a $G$-equivariant test configuration $(\XX, \LL)$ to a special test configuration $(\XX^s, \LL^s)$ such that the valuation $v_{\XX^s_0}$ is $G$-invariant. We have the inequality $$A_X(v_{\XX_0^s})/S_{-K_X}(v_{\XX^s_0}) \geqslant 1/(1 - \varepsilon) > 1$$ for some $\varepsilon \in (0; 1)$ by assumption and thus $$\mathrm{DF}(\XX^s, \LL^s) = \mathrm{Ding}(\XX^s, \LL^s) \geqslant \varepsilon J^{NA}(\XX^s, \LL^s).$$ Therefore $\mathrm{DF}(\XX, \LL) \geqslant \varepsilon J^{NA}(\XX, \LL)$ by Proposition \ref{prop:SpecialTC}. Finally, since the Donaldson--Futaki and Ding invariants of special test configurations coincide, we get the implication $(1) \Rightarrow (2)$ The same implications for semistability follow by the same argument with $\varepsilon = 0$ in Propositions \ref{prop:Fujita1} and \ref{prop:SpecialTC}.
\end{proof}

\begin{rmk}\label{rmk:Rescale} We stated and proved the above theorem for a $\mathbb{Q}$-Fano variety $X$ with the anticanonical polarization. However, it can easily be seen that Theorem \ref{thm:Main} is still true if we rescale the polarization by a rational number $t$. Indeed, the Donaldson--Futaki invariant does not change under rescaling the polarization (see \cite[Definition 3.6]{BHJ17}). The norm function is rescaled by a multiple of $t$ by \cite[Definition 2.5]{Der16}. Therefore, the same argument as in the proof of Theorem \ref{thm:Main} applies.
\end{rmk}

Combining Theorem \ref{thm:Main} and Proposition \ref{prop:Properties} we can strengthen the results of Odaka and Sano \cite[Theorem 1.10]{OS12}.

\begin{cor} \label{cor:Alpha} Let $X$ be a Fano variety of dimension $n$ and let $G \subset \aut(X)$ be a closed connected subgroup. If $\alpha_G(X) > \frac{n}{n + 1}$ (resp. $\alpha_G(X) \geqslant \frac{n}{n + 1}$) then $(X, -K_X)$ is $G$-equivariantly uniformly $K$-stable (resp. $G$-equivariantly $K$-semistable).
\end{cor}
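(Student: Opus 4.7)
The plan is to obtain this as an immediate consequence of combining the valuative criterion in Theorem \ref{thm:Main} with the comparison between $\alpha_G$ and $\delta_G$ furnished by Proposition \ref{prop:Properties}. Since $X$ is a $\mathbb{Q}$-Fano variety, the polarization $L = -K_X$ is ample, so the strong form of Proposition \ref{prop:Properties} applies and yields
\[
\alpha_G(X)\Bigl(1 + \tfrac{1}{n}\Bigr) \;\leqslant\; \delta_G(X).
\]
This is the one nontrivial input; everything else will be bookkeeping.

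First I would rewrite the inequality above as $\delta_G(X) \geqslant \tfrac{n+1}{n}\,\alpha_G(X)$. Under the hypothesis $\alpha_G(X) > \tfrac{n}{n+1}$, multiplying through by $\tfrac{n+1}{n}$ gives $\delta_G(X) > 1$. Analogously, the non-strict hypothesis $\alpha_G(X) \geqslant \tfrac{n}{n+1}$ yields $\delta_G(X) \geqslant 1$.

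Next I would invoke Theorem \ref{thm:Main}: the inequality $\delta_G(X) > 1$ is equivalent to $G$-equivariant uniform $K$-stability of $(X, -K_X)$, and $\delta_G(X) \geqslant 1$ is equivalent to $G$-equivariant $K$-semistability. Applying these equivalences to the two cases above finishes the proof.

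There is essentially no obstacle here: the content of the corollary is contained in the earlier Proposition \ref{prop:Properties} and Theorem \ref{thm:Main}, so the proof reduces to a one-line chain of implications. The only subtlety to mention is that the sharper inequality in Proposition \ref{prop:Properties} requires ampleness of $L$ (not merely bigness), which is automatic here since $-K_X$ is ample on a $\mathbb{Q}$-Fano variety.
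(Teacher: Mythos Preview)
Your proof is correct and is exactly the argument the paper intends: the corollary is stated immediately after the sentence ``Combining Theorem \ref{thm:Main} and Proposition \ref{prop:Properties}\ldots'' and is given no separate proof. Your observation that ampleness of $-K_X$ is needed for the sharper inequality in Proposition \ref{prop:Properties} is the only point worth making explicit, and you have done so.
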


\begin{exa} \label{exa:pgl} Let $X$ be a Mukai--Umemura threefold or a $V_5$ threefold. Then $\aut^0(X) \simeq \mathrm{PGL}_2(\mathbb{C})$. Taking $G = \aut^0(X)$ we have $\alpha_G(X) = \frac56$ by \cite{Don08} and \cite{CS09}. Therefore these Fano varieties are $G$-equivariantly uniformly $K$-stable by Corollary \ref{cor:Alpha}.
\end{exa}

\begin{rmk} Suppose that the space $\Val^G_X$ is empty (e.g. $X$ is a $G$-homogeneous variety). Then for every $G$-equivariant special test configuration $(\XX^s, \LL^s)$ the corresponding valuation on $X$ is trivial. Therefore we have $J^{NA}(\XX^s, \LL^s) = 0$ and therefore the test configuration $(\XX^s, \LL^s)$ is trivial by \cite[Theorem 1.3]{Der16}. Thus the $G$-equivariant $K$-stability condition is trivially satisfied. Note that the existence of K\"ahler--Einstein metrics on compact K\"ahler homogeneous manifolds was established by Matsushima in \cite[Theorem 3]{Mat72}.
\end{rmk}

\begin{prop} Let $X$ be a smooth Fano variety and suppose that $(X, -K_X)$ is $G$-equivariantly uniformly $K$-stable for a connected reductive subgroup $G \subset \aut(X)$. Then the centralizer $C_{\aut(X)}(G)$ of $G$ inside $\aut(X)$ is finite; in particular, the groups $G$ and $\aut(X)$ are semisimple.
\end{prop}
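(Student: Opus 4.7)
The plan is to derive a contradiction from the presence of a nontrivial one-parameter subgroup of $\mathbb{G}_m$ inside $C := C_{\aut(X)}(G)$, by exhibiting a $G$-equivariant product-type test configuration that is incompatible with uniform $K$-stability. Suppose for contradiction that $C$ is infinite; since $\aut(X)$ is a linear algebraic group (Corollary \ref{cor:FanoLin}), so is $C$, and its connected component $C^{0}$ is positive-dimensional. Because $G$ is reductive, the center $Z(G)^{0}$ is a (possibly trivial) torus lying in $C^{0}$; in the first step we aim to produce a nontrivial 1-PS $\lambda \colon \mathbb{G}_m \to C^{0}$. If $G$ is not already semisimple, $Z(G)^{0}$ itself furnishes such a $\lambda$ directly.

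Given $\lambda$, consider the product test configuration $(\XX_\lambda, \LL_\lambda) := (X \times \mathbb{A}^1, p_1^{*}(-K_X))$, with $\mathbb{G}_m$ acting via $\lambda$ on the first factor and by multiplication on the second. Since $\lambda$ commutes with $G$, the natural $G$-action on $X \times \mathbb{A}^1$ commutes with the $\mathbb{G}_m$-action, so $(\XX_\lambda, \LL_\lambda)$ is a $G$-equivariant test configuration. It is nontrivial because $\lambda$ is, hence $J^{NA}(\XX_\lambda) > 0$. For a product-type configuration the Donaldson--Futaki invariant reduces to the classical Futaki invariant $\mathrm{Fut}(\lambda)$, which satisfies the antisymmetry $\mathrm{Fut}(\lambda^{-1}) = -\mathrm{Fut}(\lambda)$, while the norm satisfies $J^{NA}(\XX_{\lambda^{-1}}) = J^{NA}(\XX_\lambda)$. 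Applying the $G$-equivariant uniform $K$-stability inequality $\mathrm{DF} \geq \varepsilon J^{NA}$ simultaneously to $\XX_\lambda$ and $\XX_{\lambda^{-1}}$ yields $\pm \mathrm{Fut}(\lambda) \geq \varepsilon J^{NA}(\XX_\lambda) > 0$, a contradiction.

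Therefore $C^{0}$ contains no nontrivial algebraic torus. In particular $Z(G)^{0}$ is trivial, so the reductive group $G$ has finite center and is semisimple. Similarly $Z(\aut(X))^{0} \subset C$ contains no nontrivial torus, and combined with reductivity of $\aut(X)^{0}$ --- which one should deduce from a suitable equivariant Matsushima-type statement valid under our hypothesis --- this gives semisimplicity of $\aut(X)$.

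The main obstacle is the passage from ``no nontrivial tori in $C^{0}$'' to ``$C$ is finite'' (and analogously to reductivity of $\aut(X)^{0}$). A connected positive-dimensional linear algebraic group could a priori be purely unipotent, and a unipotent 1-parameter subgroup $\mathbb{G}_a \subset C^{0}$ does not directly produce a test configuration, so the symmetrization trick above does not apply. Ruling out a positive-dimensional unipotent subgroup commuting with $G$, and excluding a nontrivial unipotent radical in $\aut(X)^{0}$, requires extra input: either an equivariant Matsushima-type reductivity theorem in the $K$-polystable setting, or a direct construction of a destabilizing $G$-equivariant degeneration associated to a commuting $\mathbb{G}_a$-action. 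This is the step where the substantive work lies.
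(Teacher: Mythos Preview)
Your core strategy---produce a $G$-equivariant product test configuration from a one-parameter subgroup $\lambda \subset C_{\aut(X)}(G)$ and exploit the sign change $\mathrm{DF}(\XX_{\lambda^{-1}}) = -\mathrm{DF}(\XX_\lambda)$ against the uniform lower bound---is exactly what the paper does. The difference is in the order of operations and in how the gap you correctly isolate is closed.

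The paper does not leave the unipotent issue for last; it dispatches it \emph{first}. Since $G$-equivariant uniform $K$-stability implies $G$-equivariant $K$-polystability, Theorem~\ref{thm:DS} (Datar--Sz\'ekelyhidi) applies and $X$ carries a K\"ahler--Einstein metric. Matsushima's theorem then gives that $\aut(X)$ is reductive, hence so is the closed subgroup $C_{\aut(X)}(G)$. With reductivity in hand, the absence of any $\mathbb{G}_m \subset C_{\aut(X)}(G)$ (which your symmetrization argument establishes) forces $C_{\aut(X)}(G)^0 = \{e\}$, and the unipotent obstacle simply never arises. The semisimplicity of $G$ and of $\aut(X)$ then follows as you indicate: their centers are contained in $C_{\aut(X)}(G)$, hence finite, and a reductive group with finite center is semisimple.

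So the ``extra input'' you flag in your last paragraph is precisely Theorem~\ref{thm:DS} together with the classical Matsushima theorem, and it should be invoked at the outset rather than deferred. A minor remark: you do not actually need $J^{NA}(\XX_{\lambda^{-1}}) = J^{NA}(\XX_\lambda)$, only that both are strictly positive for nontrivial $\lambda$, which already yields the contradiction $\mathrm{DF}(\XX_\lambda) > 0$ and $-\mathrm{DF}(\XX_\lambda) > 0$.
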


\begin{proof} By \cite[Theorem 1]{DS16} and Remark \ref{rmk:Stab} the variety $X$ admits a K\"ahler-Einstein metric, therefore the groups $\aut(X)$ and $C_{\aut(X)}(G)$ are reductive by Matsushima's theorem. The condition of $G$-equivariant uniform $K$-stability implies that every $G$-equivariant product-type test configuration is $G$-equivariantly trivial. This implies that there are no one-parameter subgroups $\mathbb{G}_m \subset C_{\aut(X)}(G)$. Since $C_{\aut(X)}(G)$ is a reductive algebraic group, its connected component of the identity is trivial, so $C_{\aut(X)}(G)$, and therefore $C(G)$, is finite. Finally, since any $\mathbb{G}_m \subset C(\aut(X))$ lies in $C_{\aut(X)}(G)$ for any $G \subset \aut(X)$ we obtain that the center of $\aut(X)$ is discrete. Using the fact that any reductive algebraic group with discrete center is semisimple (see e. g. \cite[Proposition 19.3]{Mil17}) we obtain the statement of the theorem.
\end{proof}

\begin{rmk} The above proposition shows that a K\"ahler--Einstein Fano manifold $X$ may not be uniformly $G$-equivariantly $K$-stable for any connected subgroup $G \subset \aut(X)$. For example, a smooth del Pezzo surface $X$ of degree 6 is isomorphic to a blow-up of $\mathbb{P}^2$ at three points and $\aut^0(X) \simeq (\mathbb{C}^{\times})^2$. Since the latter group is reductive, by \cite{Ti90} the surface $X$ admits a K\"ahler--Einstein metric.
\end{rmk}

\subsection{The greatest Ricci lower bound} Let now $X$ be a smooth Fano variety and let $G \subset \aut(X)$ be a connected reductive subgroup. We denote by $K_G$ the compact real form of $G$ (see e.g. \cite[Theorem 5.2.8]{OV90}); then $K_G$ is a compact Lie group acting on $X$ by automorphisms. Our Theorem \ref{thm:Main} together with \cite[Theorem 1]{DS16} shows that the inequality $\delta_G(X) > 1$ guarantees the existence of a $K_G$-invariant K\"ahler--Einstein metric on $X$. In this subsection we show that our invariant $\delta_G(X, -K_X)$ is related to metric geometry of $(X, -K_X)$ even in the case when $\delta_G(X) \leqslant 1$. Namely, we consider the following invariant: $$\beta_G(X, -K_X) = \sup\{t \in \mathbb{Q} \mid \mbox{there exists a $K_G$-invariant K\"ahler form $\omega \in c_1(X)$ with $\mathrm{Ric}(\omega) \geqslant t\omega$}\}.$$ This is a $K_G$-invariant version of {\em the greatest Ricci lower bound} $\beta(X, -K_X)$ defined by Rubinstein in \cite[Equation (32)]{Rub08}. For the case $G = \{1\}$ we have the following result.

\begin{thm}[{see \cite[Theorem 5.7]{CRZ19} and \cite[Theorem 7.5 and Corollary 7.6]{BBJ18}}] \label{thm:Rub} Let $X$ be a smooth Fano variety. Then its delta-invariant $\delta(X, -K_X)$ and the greatest Ricci lower bound $\beta(X, -K_X)$ are related by the formula $$\beta(X, -K_X) = \min\{1, \delta(X, -K_X)\}.$$
\end{thm}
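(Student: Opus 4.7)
The plan is to combine the valuative characterization of $\delta$ recorded in Definition~\ref{defi:Thresholds} with the non-archimedean variational approach of Berman--Boucksom--Jonsson to twisted K\"ahler--Einstein metrics. First, the a priori bound $\beta(X, -K_X) \leqslant 1$ is immediate: since $\mathrm{Ric}(\omega)$ and $\omega$ both represent $c_1(X)$, integrating $\mathrm{Ric}(\omega) \geqslant t\omega$ against $c_1(X)^{n-1}$ forces $t \leqslant 1$. It remains to prove $\beta = \delta$ when $\delta \leqslant 1$ and $\beta = 1$ when $\delta \geqslant 1$, which I would unify by showing that $\beta(X, -K_X)$ equals the supremum of those rational $t \leqslant 1$ with $\delta(X, -K_X) \geqslant t$.

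\textbf{Step 2 (the bound $\beta \leqslant \delta$).} I would fix any rational $t < \beta(X, -K_X)$. Then Aubin's continuity path $\mathrm{Ric}(\omega_t) = t\omega_t + (1-t)\omega_0$ admits a smooth solution, and by Berman's variational argument this implies coercivity of the $t$-twisted Ding functional $\mathbf{D}_t$ on the space of K\"ahler potentials. The Boucksom--Hisamoto--Jonsson slope formula computes the asymptotic slope of $\mathbf{D}_t$ along the geodesic ray generated by a special test configuration $(\XX^s, \LL^s)$ with central-fibre valuation $v = v_{\XX^s_0}$; at $t = 1$ this slope is $A_X(v) - S_{-K_X}(v)$ by Proposition~\ref{prop:SpecialTC}(b), and the same computation with the twist yields $A_X(v) - t \cdot S_{-K_X}(v)$ for general $t$. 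Coercivity of $\mathbf{D}_t$ forces this slope to be non-negative, so $A_X(v)/S_{-K_X}(v) \geqslant t$ for every $v \in \dival_X$, and taking infimum gives $\delta(X, -K_X) \geqslant t$. Letting $t \nearrow \beta$ yields $\delta \geqslant \beta$.

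\textbf{Step 3 (the bound $\beta \geqslant \min(1,\delta)$).} Conversely, I would suppose $\delta(X, -K_X) > t$ for some $t < 1$. Then $A_X(v) > t \cdot S_{-K_X}(v)$ for every divisorial valuation on $X$, and via the special-test-configuration reduction of Li--Xu (Proposition~\ref{prop:SpecialTC}) this valuative inequality promotes to a uniform bound $\mathbf{D}_t^{NA}(\XX, \LL) \geqslant \varepsilon J^{NA}(\XX, \LL)$ for some $\varepsilon > 0$ on all normal test configurations, arguing exactly as in the proof of Theorem~\ref{thm:Main} but with parameter $t$ in place of $1$. The BBJ equivalence then upgrades this non-archimedean coercivity to Archimedean coercivity of $\mathbf{D}_t$, and the variational principle together with the regularity theory for twisted complex Monge--Amp\`ere equations produces a smooth solution of $\mathrm{Ric}(\omega) = t\omega + (1-t)\omega_0$. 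Hence $\beta(X, -K_X) \geqslant t$, and taking $t \nearrow \min(1, \delta)$ concludes.

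\textbf{Main obstacle.} The analytic heart is the BBJ equivalence between Archimedean and non-archimedean coercivity of the twisted Ding functional, together with the regularity of the resulting minimizer. This relies on the full thermodynamical formalism of \cite{BBJ18}: the slope formula for $\mathbf{D}_t$ along psh geodesic rays, upper semicontinuity of non-archimedean energies, approximability of arbitrary psh potentials by test configurations, and a priori estimates for twisted complex Monge--Amp\`ere equations uniform in $t$ bounded away from $1$. The algebraic ingredients (special test configuration reduction and the formula $\mathrm{DF}(\XX^s, \LL^s) = A_X(v) - S_L(v)$) are already established by Propositions~\ref{prop:SpecialTC} and \ref{prop:Fujita2}, so the remaining work is analytic.
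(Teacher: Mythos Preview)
The paper does not supply its own proof of this statement: Theorem~\ref{thm:Rub} is quoted from the literature, with the proof delegated entirely to \cite[Theorem 5.7]{CRZ19} and \cite[Theorem 7.5, Corollary 7.6]{BBJ18}. So there is no in-paper argument to compare against. Your outline is a faithful sketch of how those cited references proceed: the direction $\beta \leqslant \min(1,\delta)$ comes from the slope formula for the twisted Ding functional along geodesic rays attached to (special) test configurations, and the converse $\beta \geqslant \min(1,\delta)$ comes from the BBJ equivalence between non-archimedean and archimedean coercivity of the twisted Ding functional, yielding a solution of the Aubin continuity equation at level $t$. In that sense your proposal and the paper's ``proof'' (a citation) agree.

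One technical remark: in Step~2 you assert that the slope of $\mathbf{D}_t$ along the ray of a special test configuration with central-fibre valuation $v$ is $A_X(v) - t \cdot S_{-K_X}(v)$. This is correct, but it is worth noting that the cleanest way to see it within the framework already set up in the paper is via rescaling of the polarization: the condition $\delta(X, -K_X) > t$ is equivalent to $\delta(X, -tK_X) > 1$ (cf.\ \cite[Remark 4.5]{BlJ20}, used in the proof of Proposition~\ref{prop:Beta}), and then one invokes the untwisted valuative criterion for $(X, -tK_X)$ together with Remark~\ref{rmk:Rescale}. This rescaling trick is exactly what the paper uses in the $G$-equivariant analogue (Proposition~\ref{prop:Beta}), and it sidesteps having to recompute the non-archimedean twisted Ding slope from scratch.
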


We prove an analogous result in $G$-equivariant setup and establish relations between the above invariants (with and without $G$-action).

\begin{prop} \label{prop:Beta} Let $X$ be a smooth Fano variety and let $G \subset \aut(X)$ be a connected reductive subgroup. Then there is an equality \begin{equation}\beta_G(X, -K_X) = \min\{1, \delta_G(X, -K_X)\}.\label{first}\end{equation} We also have the following relation between $\delta$-invariants \begin{equation}\delta(X, -K_X) = \min\{1, \delta_G(X, -K_X)\}.\label{third}\end{equation}
\end{prop}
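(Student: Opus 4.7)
The plan is to deduce both equalities from Theorem~\ref{thm:Rub} and Theorem~\ref{thm:Main}, using as a bridge a Matsushima-type averaging that identifies $\beta(X,-K_X)$ with $\beta_G(X,-K_X)$ when $G$ is reductive. Throughout, $t$ denotes the twisting parameter in the inequality $\mathrm{Ric}(\omega)\ge t\omega$.

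To prove (\ref{first}), I would run the argument of Theorem~\ref{thm:Rub} in the $G$-equivariant category, using Theorem~\ref{thm:Main} wherever \cite[Theorem B]{BlJ20} appears in the non-equivariant case. The bound $\beta_G\le 1$ is cohomological: pairing $\mathrm{Ric}(\omega)\ge t\omega$ with $c_1(X)^{n-1}$ gives $\int_X c_1(X)^n\ge t\int_X c_1(X)^n$, hence $t\le 1$. For the remaining upper bound $\beta_G\le\delta_G$ when $\delta_G<1$, the existence of a $K_G$-invariant twisted K\"ahler--Einstein metric with parameter $t$ implies coercivity of the twisted Ding functional on the space of $K_G$-invariant potentials; the non-Archimedean slope formula of \cite{BBJ18} together with the specialization-to-special-test-configurations argument from the proof of Proposition~\ref{prop:Fujita1} then forces $A_X(v)\ge t\,S_{-K_X}(v)$ for every $v\in\dival^G_X$, i.e.\ $\delta_G\ge t$. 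For the lower bound $\beta_G\ge\min\{1,\delta_G\}$, fix $t<\min\{1,\delta_G\}$; Theorem~\ref{thm:Main} supplies $G$-equivariant uniform Ding-stability with quantitative slack depending on $t$, and the $G$-equivariant version of the coercivity-implies-existence theorem of \cite{BBJ18} (which goes through since $K_G$ is compact) yields the desired $K_G$-invariant twisted K\"ahler--Einstein metric.

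For (\ref{third}), the key auxiliary input is the identity $\beta(X,-K_X)=\beta_G(X,-K_X)$ valid for reductive $G$. The inequality $\beta_G\le\beta$ is immediate from the definitions, since $K_G$-invariance is an additional constraint. The reverse uses Matsushima-type averaging: given any K\"ahler form $\omega$ with $\mathrm{Ric}(\omega)\ge t\omega$, uniqueness of twisted K\"ahler--Einstein metrics modulo $\aut^0(X)$ (Bando--Mabuchi, extended to the twisted setting in \cite{BBEGZ19}) together with convexity of the Ding functional along weak geodesics in the Mabuchi space produces a $K_G$-invariant metric with the same Ricci lower bound. Combining with Theorem~\ref{thm:Rub} and (\ref{first}) yields
\[
\min\{1,\delta(X,-K_X)\}\;=\;\beta(X,-K_X)\;=\;\beta_G(X,-K_X)\;=\;\min\{1,\delta_G(X,-K_X)\}.
\]
When $\delta_G<1$ this already reads $\delta(X)=\delta_G(X)$; when $\delta_G\ge 1$ the positive-dimensionality of $G\subset \aut(X)$ together with \cite[Theorem 5.4]{BBEGZ19} (uniform K-stability forces $\aut^0$ finite) gives $\delta(X)\le 1$, whence the displayed identity collapses to $\delta(X)=1=\min\{1,\delta_G\}$, proving (\ref{third}).

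The main obstacle is the $G$-equivariant analytic step: executing the BBJ/CRZ coercivity--existence correspondence on the subspace of $K_G$-invariant potentials and running the Matsushima averaging for twisted K\"ahler--Einstein metrics. Both inputs lie outside the algebraic framework of this paper; Theorem~\ref{thm:Main} enters precisely as the K-stability hypothesis feeding the analytic machinery.
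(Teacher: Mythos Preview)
Your approach is correct and shares the essential ingredients with the paper's, but the paper organizes them more economically. The paper opens by asserting $\beta = \beta_G$ directly from the $K_G$-invariant continuity method of \cite{DS16} (rather than via Bando--Mabuchi uniqueness and averaging as you do), and uses this identity already for the upper bound in (\ref{first}): combining $\beta_G = \beta = \min\{1,\delta\}$ from Theorem~\ref{thm:Rub} with the trivial inequality $\delta \leq \delta_G$ gives $\beta_G \leq \min\{1,\delta_G\}$ without any $G$-equivariant coercivity or non-Archimedean slope argument. For the lower bound in (\ref{first}) the paper likewise bypasses the BBJ variational machinery: once $t < \delta_G$, Theorem~\ref{thm:Main} together with Remark~\ref{rmk:Rescale} yields $G$-equivariant uniform $K$-stability of $(X,-tK_X)$, and \cite[Proposition~10]{DS16} directly furnishes the $K_G$-invariant solution to $\mathrm{Ric}(\omega)=t\omega+\alpha$. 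So what the paper buys is that all analytic input is outsourced to \cite{DS16} and the non-equivariant Theorem~\ref{thm:Rub}; what your route buys is an independent derivation of (\ref{first}) that does not pass through the identity $\beta=\beta_G$. For (\ref{third}) both arguments reduce to $\min\{1,\delta\}=\min\{1,\delta_G\}$; the paper case-splits on $\delta\leq 1$ versus $\delta>1$, in the latter case invoking \cite[Theorem~5.4]{BBEGZ19} to force the connected $G$ to be trivial. Your assumption that $G$ is positive-dimensional is not part of the hypothesis; when $G=\{1\}$ one must argue separately, as the paper does.
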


\begin{proof} Let us first prove equality (\ref{first}). Note that by $K_G$-invariant continuity method (see e.g. \cite{DS16}) we have an equality of the greatest Ricci lower bounds $$\beta(X, -K_X) = \beta_G(X, -K_X).$$ Therefore, we combine this with Theorem \ref{thm:Rub} and an obvious inequality $\delta(X, -K_X) \leqslant \delta_G(X, -K_X)$ to obtain $$\beta_G(X, -K_X) = \beta(X, -K_X) = \min\{1, \delta(X, -K_X)\} \leqslant \min\{1, \delta_G(X, -K_X)\}.$$

Conversely, let us choose $t \in (0; 1] \cap \mathbb{Q}$ such that $0 < t < \delta_G(X, -K_X)$. Then by \cite[Remark ~4.5]{BlJ20}, we have $\delta_G(X, -tK_X) > 1$. Therefore by Remark \ref{rmk:Rescale} the polarized variety $(X, -tK_X)$ is uniformly $G$-equivariantly $K$-stable. Then by \cite[Proposition 10]{DS16} the equation $$\mathrm{Ric}(\omega) = t\omega + \alpha$$ has a $K_G$-invariant solution for some $K_G$-invariant K\"ahler form $\alpha \in (1 - t)c_1(X)$. This shows that $$\min\{1, \delta_G(X, -K_X)\} \leqslant \beta_G(X, -K_X)$$ and proves equality (\ref{first}).

The equality (\ref{third}) follows from the previous one and Theorem \ref{thm:Rub} in the case when $\delta(X, -K_X) \leqslant 1$; otherwise, by \cite[Theorem 5.4]{BBEGZ19}, we have $G = \{1\}$ and equality (\ref{third}) then holds tautologically.
\end{proof}

\begin{rmk} We expect that an appropriate version of the Proposition \ref{prop:Beta} holds for klt Fano varieties and singular K\"ahler metrics. Also, it would be interesting to see a purely algebro-geometric proof of the equality $\delta(X) = \min\{1, \delta_G(X)\}$.
\end{rmk}

\section{Varieties with torus action} 

In this section we consider the case of a variety $X$ with an action of a torus $T = (\mathbb{G}_m)^k \subset \aut(X)$.

\subsection{Varieties with an action of a torus} Toric varieties form a well-understood class of varieties with large groups of symmetries. The formulas for the $\alpha$-invariant of a toric variety in terms of the associated fan were given in \cite{BS99}, in \cite{S05} using analytic methods, and in \cite[Lemma 5.1]{CS08}. The analogous formula for the $\delta$-invariant first appeared in \cite[Corollary 7.16]{BlJ20}. The underlying idea in these formulas is that it suffices to consider only torus-invariant divisors (or valuations) in the computation of $\alpha(X)$ and $\delta(X)$. We show that the same is true for a subtorus $T \subset \aut(X)$ of any dimension. 

\begin{prop} \label{prop:Solv}Let $(X, L)$ be a variety with an ample polarization and let $T \subset \aut(X, L)$ be a subtorus. Then the following equalities hold: $$\alpha_T(X, L) = \alpha(X, L) \qquad \mbox{and} \qquad \delta_T(X, L) = \delta(X, L).$$
\end{prop}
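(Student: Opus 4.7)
The inclusion $\dival_X^T\subseteq\dival_X$ immediately yields $\alpha(X,L)\leq\alpha_T(X,L)$ and $\delta(X,L)\leq\delta_T(X,L)$, so the content of the proposition lies in the reverse inequalities. The plan is to show that for every divisorial valuation $v\in\dival_X$ one can produce a $T$-invariant divisorial $\bar v\in\dival_X^T$ whose ratios $A_X(\bar v)/T_L(\bar v)$ and $A_X(\bar v)/S_L(\bar v)$ do not exceed those of $v$; taking infima then finishes the argument, generalizing the toric case of \cite[Corollary 7.16]{BlJ20} to arbitrary subtori.

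I would construct $\bar v$ by a one-parameter-subgroup degeneration. Fix a generic cocharacter $\lambda\colon\mathbb{G}_m\hookrightarrow T$, set $v_s:=\lambda(s)\cdot v$ for $s\in\mathbb{G}_m$, and take the limit $\bar v:=\lim_{s\to 0}v_s$ in $\Val_X$. Since each $\lambda(s)\in\aut(X,L)$, the three invariants $A_X(v_s)$, $T_L(v_s)$, $S_L(v_s)$ are constant in $s$. The limit is automatically $\lambda(\mathbb{G}_m)$-invariant, and for generic $\lambda$ it is $T$-invariant. Nontriviality and divisoriality of $\bar v$ are arranged by passing first to a $T$-equivariant log resolution of $X$ (available by \cite[\S3.9.1]{Kol07}) so that the center of $v$ specializes under $\lambda$ to a $T$-stable prime divisor; this is a 1-parameter-subgroup refinement of Theorem \ref{thm:Sumihiro}, and it handles the bad case in which the naive Sumihiro invariantization would collapse to the trivial valuation.

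Lower semicontinuity of $A_X$ (Proposition \ref{prop:Extend}) gives $A_X(\bar v)\leq A_X(v)$. The main obstacle is the opposite-direction estimates $S_L(\bar v)\geq S_L(v)$ and $T_L(\bar v)\geq T_L(v)$, since $S_L$ and $T_L$ are only known to be lower semicontinuous (Proposition \ref{prop:Extend2}). I would exploit the torus structure: for each $m$ and $t$, the filtration $\mathcal{F}^t_{v_s}H^0(X,mL)=\lambda(s)\cdot\mathcal{F}^t_v H^0(X,mL)$ has dimension constant in $s\in\mathbb{G}_m$, so by properness of the Grassmannian its flat limit at $s=0$ preserves dimension and embeds into $\mathcal{F}^t_{\bar v}H^0(X,mL)$. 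Integrating the pointwise inequality $\dim\mathcal{F}^t_{\bar v}H^0(X,mL)\geq\dim\mathcal{F}^t_v H^0(X,mL)$ in $t$ yields $S_L(\bar v)\geq S_L(v)$, with the analogous bound for $T_L$. A backup route, in case the Grassmannian/valuation matching is delicate, is the basis-type approach of Definition \ref{defi:Divisors}: any $m$-basis-type divisor can be replaced by one built from a $T$-eigenbasis of $H^0(X,mL)$, such a divisor is automatically $T$-invariant, and its lct is computed by a $T$-invariant valuation via $T$-equivariant log resolution; this identifies $\delta_m$ with its $T$-equivariant analogue and yields the equality for $\delta$ after $\limsup_m$, with the statement for $\alpha$ following by the same scheme combined with Proposition \ref{prop:Alpha}.
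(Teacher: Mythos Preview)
Your main construction has a genuine gap, and it is not in the place you flagged as ``the main obstacle''. The issue is that for the pointwise limit $\bar v=\lim_{s\to 0}\lambda(s)\cdot v$ (equivalently, the Sumihiro invariantisation of $v$ under $T$) one has $\bar v(f)=\min_\chi v(f_\chi)$ for $f=\sum_\chi f_\chi$ the weight decomposition, and hence $\bar v(f)\leq v(f)$ for every $f$. This forces $\mathcal{F}^t_{\bar v}H^0(X,mL)\subseteq\mathcal{F}^t_{v}H^0(X,mL)$, so $S_L(\bar v)\leq S_L(v)$ and $T_L(\bar v)\leq T_L(v)$, the \emph{opposite} of what you need. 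Your Grassmannian step fails at exactly this point: the flat limit of $\lambda(s)\cdot\mathcal{F}^t_vH^0$ as $s\to 0$ is the initial subspace $\mathrm{in}_\lambda(\mathcal{F}^t_vH^0)$, which is a $T$-stable subspace of the correct dimension, but it is \emph{not} contained in $\mathcal{F}^t_{\bar v}H^0$. Concretely, if $H^0=\langle e_0,e_1\rangle$ with distinct weights and $\mathcal{F}^t_vH^0=\langle e_0+e_1\rangle$, then $\mathcal{F}^t_{\bar v}H^0=0$ while the flat limit is a line. (The triviality worry is also real: if the centre of $v$ lies in the open $T$-orbit, no $T$-equivariant resolution will make it a divisor, so your proposed fix does not apply.)

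The paper's proof avoids this by decoupling the two roles you have assigned to $\bar v$. It degenerates the \emph{filtration} $\mathcal{F}_v$ to its initial filtration $\mathrm{in}(\mathcal{F}_v)$ via the Kaveh--Khovanskii construction; this is precisely the Grassmannian limit you wrote down, and by construction it preserves all the dimensions, so the $S$-quantity is unchanged. The base ideals of $\mathrm{in}(\mathcal{F}_v)$ are $T$-invariant (monomial), and their lct does not increase under this degeneration by \cite[Proposition~7.13]{BlJ20}. Only then does one choose a $T$-invariant valuation $\bar v$ \emph{computing that lct} via \cite[Proposition~8.1]{JM12}; this $\bar v$ is adapted to $\mathrm{in}(\mathcal{F}_v)$, not to $v$, which is why $S_L(\bar v)\geq S_L(v)$ holds. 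In short, the flat limit of the filtration and the limit of the valuation are different $T$-invariant objects, and the argument must use the former.

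Your backup via $T$-eigenbases is closer in spirit to the paper's method, but as stated it is circular: the assertion that ``any $m$-basis-type divisor can be replaced by one built from a $T$-eigenbasis'' with no larger lct is essentially the statement to be proved, and the naive degeneration of a basis under $\lambda(s)$ destroys linear independence when two basis vectors share a dominant weight. Making this precise again requires the initial-filtration machinery.
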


\begin{proof} We apply the degeneration to the initial filtration argument from \cite{BlJ20}. Let us give an outline of their argument. Attached to a valuation $v \in \Val_X$ is the filtration $\mathcal{F}_v$ on $R(X, L)$. Using a construction from \cite{KK12} we can associate to $\mathcal{F}_v$ the so-called initial filtration $\mathrm{in}(\mathcal{F}_v)$. Its crucial property is that the sequence of base ideals $$\mathfrak{b}_{\bullet}(\mathrm{in}(\mathcal{F}_v)) = \{\mathfrak{b}_{t, m}(\mathrm{in}(\mathcal{F}_v))\}, t \in \mathbb{R}, m \in \mathbb{N}$$ consists of monomial ideals and therefore is invariant under the action of $T$. Moreover, by \cite[Proposition 7.13]{BlJ20} it is possible to degenerate $\mathcal{F}_v$ to $\mathrm{in}(\mathcal{F}_v)$ in a one-parameter family in such a way that the log canonical threshold of the sequence defined by $$\lct(\mathfrak{b}_{\bullet}) = \inf_{v \in \Val_X}\frac{A_X(v)}{v(\mathfrak{b}_{\bullet})}$$ does not increase after passing to $\mathrm{in}(\mathcal{F}_v)$, that is $$\lct(\mathfrak{b}_{\bullet}(\mathrm{in}(\mathcal{F}_v))) \leqslant \lct(\mathfrak{b}_{\bullet}(\mathcal{F}_v)).$$ By \cite[Proposition 8.1]{JM12} we can associate to $\mathrm{in}(\mathcal{F}_v)$ a $T$-invariant valuation $\bar v$ computing the log canonical threshold $\lct(\mathfrak{b}_{\bullet}(\mathrm{in}(\mathcal{F}_v)))$. Such valuation will be a monomial valuation on a certain $T$-model of $X$. The valuation $\bar v$ has the property that $$\lct(\mathfrak{b}_{\bullet}(\mathrm{in}(\mathcal{F}_v)) = A_X(\bar v) \leqslant A_X(v) = \lct(\mathfrak{b}_{\bullet}(\mathcal{F}_v)$$ by \cite[Lemma 1.1]{BlJ20}. Moreover, we have $S_L(\bar v) \geqslant S_L(v)$ by the argument in \cite[Proposition 6.8]{BlJ20}. Therefore, the infimum in the definition of $\delta(X, L)$ is at least the infimum over the subspace $\Val^T_X$. The statement now follows from the definition of $\delta_T(X, L)$. The proof for $\alpha_T(X, L)$ is the same.
\end{proof}

\begin{rmk} In the case of $\alpha(X)$ this result was proved in \cite[Corollary 1.8]{OS12} for any connected solvable group using the Borel fixed point theorem. We expect Proposition \ref{prop:Solv} to hold in the case of a connected solvable group $G$.
\end{rmk}

By combining the criterion for $T$-equivariant semistability from Theorem \ref{thm:Main} and Proposition \ref{prop:Solv} we immediately obtain the following corollary.

\begin{cor} A Fano variety $(X, -K_X)$ is $K$-semistable if and only if it is $K$-semistable with respect to all $T$-equivariant test configurations.
\end{cor}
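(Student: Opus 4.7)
The plan is to observe that the corollary is essentially a formal consequence of the two results just established, so no new geometric input is required; the job is only to assemble them correctly.

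First I would dispatch the trivial direction: any test configuration $(\XX, \LL)$ of $(X, -K_X)$ is in particular a $T$-equivariant test configuration when regarded as such only over the trivial subgroup, and the class of $T$-equivariant test configurations is a subset of the class of all test configurations. Hence ordinary $K$-semistability immediately implies $T$-equivariant $K$-semistability.

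For the substantive direction, suppose $(X, -K_X)$ is $K$-semistable along all $T$-equivariant test configurations. Since $T \subset \aut(X, -K_X)$ is a closed connected subgroup (being a torus), Theorem \ref{thm:Main} applies with $G = T$ and yields $\delta_T(X, -K_X) \geqslant 1$. Now invoke Proposition \ref{prop:Solv} (with $L = -K_X$, which is ample) to conclude $\delta(X, -K_X) = \delta_T(X, -K_X) \geqslant 1$. Finally, apply the unrestricted case of the valuative criterion, namely Theorem \ref{thm:Main} with $G$ the trivial connected subgroup (equivalently \cite[Theorem B]{BlJ20}), to deduce that $(X, -K_X)$ is $K$-semistable in the usual sense.

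There is no real obstacle: the only thing to double-check is that the hypotheses of each cited result are met. In particular, Theorem \ref{thm:Main} requires the subgroup to be closed and connected, which is automatic for a subtorus and also for the trivial subgroup; Proposition \ref{prop:Solv} requires $L$ to be ample, which holds since we work with the anticanonical polarization of a $\mathbb{Q}$-Fano variety. Thus the corollary follows by a direct chain of equivalences $K$-semistable $\Longleftrightarrow$ $\delta(X) \geqslant 1$ $\Longleftrightarrow$ $\delta_T(X) \geqslant 1$ $\Longleftrightarrow$ $T$-equivariantly $K$-semistable.
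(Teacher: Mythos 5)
Your proposal is correct and follows exactly the route the paper intends: the paper derives the corollary "by combining the criterion for $T$-equivariant semistability from Theorem \ref{thm:Main} and Proposition \ref{prop:Solv}", which is precisely the chain of equivalences you spell out (together with the trivial containment of $T$-equivariant test configurations among all test configurations). No gaps; your version just makes the implicit steps explicit.
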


This result shows that for varieties with an action of a torus $T = (\mathbb{G}_m)^k$ it is necessary to consider additional symmetries in order to use Tian's criterion (or Theorem \ref{thm:Main}). This method was first  implemented in \cite{BS99} for symmetric toric Fano manifolds. In \cite{Su13} the method was applied to $T$-varieties of complexity one, and in \cite{CS18} for Fano threefolds from the $V_{22}$ family having automorphism groups $\mathbb{G}_m \rtimes \mathbb{Z}/2\mathbb{Z}$ (cf. \cite{DKK17} where the additional symmetries were not used).

\section{Spherical Fano varieties} A natural generalization of toric varieties is the class of spherical varieties. A variety $X$ is spherical if it has an action of a connected reductive group $G$ such that a Borel subgroup $B \subset G$ acts on $X$ with an open orbit. For general information on spherical varieties we refer to \cite{BLV86}, see also \cite{Kn91, Tim11}. Log canonical thresholds of spherical varieties were investigated in \cite{Pas16, Smi17, Del15}. An extensive study of $K$-stability of spherical Fano varieites was undertaken by Delcroix in \cite{Del16}. 

In case of a spherical $G$-variety the space of $G$-invariant valuations has a particularly nice description (see Theorem \ref{thm:Cone} below). For a Fano variety $X$, spherical under the action of a connected reductive group $G$ we give a formula for $\delta_G$ in terms of the combinatorial data defined by $X$. Moreover, we recover the combinatorial criterion for $G$-equivariant $K$-polystability of a spherical Fano variety given by Delcroix \cite[Theorem A]{Del16}. Our proof is different, though not entirely independent of the one in \cite{Del16}.

We fix the notation, mostly following \cite{Del16}. Let $X$ be a projective variety, spherical under the action of a connected reductive group $G$. Let $B \subset G$ be a Borel subgroup and let $T \subset B$ be a maximal torus. We denote by $\mathfrak{X}(T)$ the group of algebraic characters of $T$. Let also $\Phi$ be the root system of $(G, B, T)$ and $\Phi^+$ be the set of positive roots determined by $B$. We  also let $\mathfrak{N}(T)$ be the group of $1$-parameter subgroups of $T$. 

We describe the set of $G$-invariant valuations on $X$. This set depends on the open $G$-orbit $U$ only. Let us fix a Borel subgroup $B \subset G$. Associated to $B$ is the space of $B$-eigenfunctions $$M_B(U) = \{\chi \in \mathfrak{X}(B) \mid \mbox{$b \cdot f = \chi(b)f$ for all $b \in B$ and some $f \in \mathbb{C}(X)^*$}\}.$$ We denote by $N_B(U)$ the dual of $M_B(U)$; it is a free abelian group of finite rank. To every $v \in \Val^G_X$ we can associate a vector $\rho_v \in N_B(U) \otimes \mathbb{R}$ by the rule $\rho_v(\chi) = v(f)$ where $f \in \mathbb{C}(X)^*$ is a rational function as in definition of $M_B(U)$. This is a well defined map exactly because $B$ has an open orbit. The space $N_B \otimes \mathbb{R}$ is a quotient of $\mathfrak{N}(T) \otimes \mathbb{R}$; we denote by $\pi \colon \mathfrak{N}(T) \otimes \mathbb{R} \to N_B \otimes \mathbb{R}$ the quotient map. The next result gives a very simple description of the set $\Val^G_X$.

\begin{thm}[{see e.g. \cite[Corollaries 1.8 and 5.3]{Kn91}}] \label{thm:Cone} The map $\rho \colon \Val^G_X \to N_B(U) \otimes \mathbb{R}$ is injective and identifies the set $\Val^G_X$ with a polyhedral convex cone $\mathcal{V}$ in the finite-dimensional space $N_B(U) \otimes \mathbb{R}$.
\end{thm}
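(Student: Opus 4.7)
The plan is to establish in order: well-definedness of $\rho$, injectivity of $\rho$, and finally that the image $\mathcal{V}$ is a polyhedral convex cone. The last step is the main obstacle and requires the full force of the Luna--Vust--Knop theory of spherical embeddings.

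For well-definedness, the hypothesis that $B$ acts with an open orbit on $U$ forces $\mathbb{C}(X)^B = \mathbb{C}$. Consequently, any two $B$-eigenfunctions in $\mathbb{C}(X)^*$ with the same weight $\chi$ are proportional by a nonzero scalar, and since every $v \in \Val^G_X$ is trivial on $\mathbb{C}^*$ the value $v(f)$ depends only on $\chi$. Linearity of $\rho_v$ on $M_B(U)$ is immediate since $v$ converts products into sums and eigenfunctions multiply by adding their weights.

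For injectivity, the key point is that a $B$-invariant (in particular $G$-invariant) valuation is determined by its values on $B$-semi-invariants. Given $f \in \mathbb{C}(X)^*$, it lies in some finite-dimensional rational $B$-submodule $V_f \subset \mathbb{C}(X)$, and since $T \subset B$ acts diagonalizably on $V_f$ we may write $f = f_1 + \dots + f_s$ with $T$-eigenvectors $f_i$ of pairwise distinct weights $\chi_i$. The $B$-invariance of $v$ yields $v(t \cdot f) = v(f)$ for all $t \in T$; expanding $t \cdot f = \sum_i \chi_i(t) f_i$ and letting $t$ vary, a Vandermonde-type argument over the infinite field $\mathbb{C}$ gives $v(f_i) \geqslant v(f)$ for every $i$. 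Combined with the ultrametric inequality this forces $v(f) = \min_i v(f_i)$. Iterating inside $V_f$ to isolate the highest $B$-weight components (which are genuinely $B$-semi-invariant) expresses $v(f)$ entirely in terms of the numbers $\rho_v(\chi_i)$, so $\rho$ is injective.

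The realization of $\mathcal{V}$ as a polyhedral convex cone is the main obstacle. Closedness under multiplication by $\mathbb{R}_{\geqslant 0}$ is immediate from homogeneity of valuations, so by Proposition \ref{prop:DivDense} it suffices to describe the rational points of $\mathcal{V}$, and by Sumihiro's theorem (Theorem \ref{thm:Sumihiro}) each such point is realized by a valuation of the form $c \cdot \ord_D$ for a $G$-stable prime divisor $D$ on some birational $G$-model of $X$. The strategy is then to apply the local structure theorem for spherical varieties in order to reduce to studying $G$-invariant divisorial valuations on a simple affine $G$-embedding of the open orbit $U$; on such a model, Knop's finite generation theorem ensures that the algebra of $U_B$-invariants (where $U_B$ denotes the unipotent radical of $B$) is finitely generated, and $\mathcal{V}$ is cut out inside $N_B(U) \otimes \mathbb{R}$ by finitely many linear inequalities extracted from the weight monoid of this algebra. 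The most delicate step is the surjectivity onto the predicted cone: every rational point of $\mathcal{V}$ must be realized by a genuine $G$-invariant divisorial valuation, which requires constructing a simple $G$-embedding of $U$ with prescribed colored cone whose unique closed $G$-orbit corresponds to the given point.
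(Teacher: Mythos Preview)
The paper does not give its own proof of this theorem; it is stated with a citation to \cite[Corollaries 1.8 and 5.3]{Kn91} and used as a black box. There is therefore no ``paper's proof'' to compare your outline against.

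As for your sketch on its own merits: the well-definedness argument is correct, and the polyhedrality discussion is honest in flagging that the real content lies in the Luna--Vust--Knop theory (local structure theorem, finite generation of $U_B$-invariants, and the existence of simple embeddings with prescribed colored cone); what you write there is a pointer to the literature rather than a proof, which is appropriate given the depth of the result.

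The injectivity argument has a small gap worth noting. Your Vandermonde argument using $T$-invariance correctly yields $v(f) = \min_i v(f_i)$ for the $T$-weight decomposition, but the $f_i$ are only $T$-eigenvectors, not $B$-eigenvectors, and $\rho_v$ is defined only on $M_B(U)$. The phrase ``iterating inside $V_f$ to isolate the highest $B$-weight components'' hides the actual work: one must use the $U_B$-invariance of $v$ (where $U_B$ is the unipotent radical of $B$) to show that $v(f_i)$ is bounded below by $v$ of a genuine $B$-eigenvector in the $B$-submodule generated by $f_i$. Concretely, acting by one-parameter unipotent subgroups of $U_B$ and repeating the Vandermonde-type argument pushes each $f_i$ toward a highest-weight (hence $U_B$-fixed, hence $B$-semi-invariant) vector; only then does $\rho_v$ enter. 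This is exactly how Knop's argument proceeds, so your outline is on the right track, but as written the bridge from $T$-eigenvectors to $B$-eigenvectors is asserted rather than established.
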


In \cite{Pas17} Pasquier described the restriction of the log discrepancy function $A_X$ to the space $\Val^G_X$ using the identification of the latter space with the cone $\mathcal{V}$ given by the above Theorem \ref{thm:Cone}. This description uses the so-called complete colored fan $\mathbb{F}_X$ associated to $X$, which is a finite set of finitely generated cones $\mathcal{C} \subset N_B \otimes \mathbb{R}$ with some additional data (see e.g. \cite[Definition 2.13]{Pas17} for details).

\begin{prop}[see {\cite[Theorem 2.20 and Proposition 5.2]{Pas17}}] \label{prop:Pas} Let $X$ be a $\mathbb{Q}$-Gorenstein spherical variety and let $\mathbb{F}_X$ be the complete colored fan associated to $X$. For every cone $\mathcal{C} \subset \mathbb{F}_X$ there exists a function $h_{\mathcal{C}} \colon N_B \otimes \mathbb{R} \to \mathbb{R}$ such that its restriction to the cone $\mathcal{C} \in \mathbb{F}_X$ is linear and for every $v \in \mathcal{V} \cap \mathcal{C}$ we have $A_X(v) = h_{\mathcal{C}}(v)$. If $v = \mathrm{ord}_{D}$ corresponds to a primitive generator of an edge of $\mathcal{C}$ then $h_{\mathcal{C}}(v) = a_{D}$ with $a_D = 1$ or $a_D \geqslant 2$.
\end{prop}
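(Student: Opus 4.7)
The plan is to compute the log discrepancy function $A_X$ on the cone $\mathcal{V}$ by passing to $G$-equivariant toroidal resolutions of $X$ and then invoking Brion's explicit formula for the canonical class of a spherical variety; the claim is then essentially a polyhedral bookkeeping exercise.

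For each cone $\mathcal{C} \in \mathbb{F}_X$, I would construct a $G$-equivariant birational morphism $\varphi_{\mathcal{C}} \colon Y_{\mathcal{C}} \to X$ from a smooth toroidal spherical embedding $Y_{\mathcal{C}}$ whose colored fan refines $\mathbb{F}_X$ and satisfies the following: the primitive integral generators $u_1, \ldots, u_r$ of the rays of $\mathcal{C}$ correspond to $G$-invariant prime divisors $D_1, \ldots, D_r$ on $Y_{\mathcal{C}}$, with $\rho_{D_i} = u_i$ under the map $\rho$ of Theorem \ref{thm:Cone}. Such a resolution exists by Luna--Vust theory of spherical embeddings. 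Moreover, each $G$-invariant valuation $v \in \mathcal{V} \cap \mathcal{C}$ becomes a toroidal monomial valuation on $Y_{\mathcal{C}}$ and decomposes as $v = \sum \lambda_i \ord_{D_i}$ with $\lambda_i \geqslant 0$.

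Next I would invoke Brion's formula for the anticanonical class of a toroidal spherical variety: $-K_{Y_{\mathcal{C}}} = \sum_i D_i + \sum_{D' \text{ color}} m_{D'} D'$, where the coefficients $m_{D'}$ are explicit positive integers determined by the pairing of $\rho_{D'}$ with the character $2\rho_P$ associated to the stabilizer of the open $B$-orbit. Pulling this back via $\varphi_{\mathcal{C}}$ and reading off the coefficient of each $D_i$ in the relative canonical divisor $K_{Y_{\mathcal{C}}/X}$ yields a function linear in $u_i$, because the pairing between $\mathfrak{N}(T) \otimes \mathbb{R}$ and $\mathfrak{X}(B)$ is linear and the coefficients $m_{D'}$ are fixed. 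Taking $h_{\mathcal{C}} \colon N_B \otimes \mathbb{R} \to \mathbb{R}$ to be the unique linear extension of this assignment $u_i \mapsto a_{D_i} := 1 + \ord_{D_i}(K_{Y_{\mathcal{C}}/X})$, the equality $A_X(v) = h_{\mathcal{C}}(v)$ for $v \in \mathcal{V} \cap \mathcal{C}$ follows from the standard identification of the log discrepancy of a toroidal divisorial valuation with the corresponding coefficient of the log discrepancy divisor.

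It remains to establish the dichotomy $a_D = 1$ or $a_D \geqslant 2$. Integrality of each $a_{D_i}$ is immediate, since Brion's formula has integer coefficients on $G$-invariant divisors (indeed, each equal to $1$) and $K_{Y_{\mathcal{C}}/X}$ has integer coefficients on $\varphi_{\mathcal{C}}$-exceptional divisors. The value $a_{D_i} = 1$ occurs precisely when $D_i$ is non-exceptional, that is, $D_i$ is an irreducible component of the boundary of $X$ itself; in that case the log discrepancy is tautologically $1$. When $D_i$ is exceptional, a direct examination of the Luna--Vust description of extractions shows that the discrepancy jumps by at least $1$, giving $a_{D_i} \geqslant 2$. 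The main technical obstacle I foresee is the consistency check: the functions $h_{\mathcal{C}}$ associated to different resolutions and to overlapping cones must agree on common faces and be independent of the choice of toroidal refinement. This reduces to the intrinsic nature of $A_X$ as a function on $\Val_X$ together with the invariance of Brion's formula under admissible refinements of colored fans, so the integer coefficients match on overlaps without the need for further case analysis.
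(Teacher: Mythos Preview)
The paper does not supply its own proof here; the proposition is quoted from \cite[Theorem~2.20 and Proposition~5.2]{Pas17}. Your overall strategy---pass to a smooth toroidal $G$-equivariant resolution and read off log discrepancies from Brion's anticanonical formula---is the correct one and matches Pasquier's, so the piecewise linearity of $A_X$ on each $\mathcal{V} \cap \mathcal{C}$ follows as you indicate.

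The argument for the dichotomy $a_D = 1$ or $a_D \geqslant 2$, however, does not work. You claim integrality because ``$K_{Y_{\mathcal{C}}/X}$ has integer coefficients on $\varphi_{\mathcal{C}}$-exceptional divisors'', but $X$ is only $\mathbb{Q}$-Gorenstein, so discrepancies are a priori merely rational. You then claim that exceptional $D_i$ have log discrepancy $\geqslant 2$; this is false in general (klt gives only $A_X > 0$), and it is false here too: decolouring a colored ray produces an exceptional $G$-invariant divisor whose log discrepancy equals the Brion coefficient of the underlying color, which can perfectly well be $1$. So the dichotomy does not split along the exceptional/non-exceptional line. The actual source, as the paper notes immediately after the statement, is root-theoretic: the $a_D$ are precisely the coefficients of the $B$-stable prime divisors in Brion's formula for $-K_X$, and their explicit expression in terms of spherical roots and coroots (this is the content of \cite[Theorem~2.20]{Pas17}, via the classification of colors into types $a$, $2a$, $b$) shows directly that each is a positive integer equal to $1$ or at least $2$. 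You need to invoke that classification rather than a general discrepancy bound.
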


The coefficients $a_D$ can be expressed via the spherical roots and coroots of the triple $(G, B, T)$ (see \cite[Theorem 2.20]{Pas17} for details). Note also that by \cite[Proposition 5.6]{Pas17} a $\mathbb{Q}$-Gorenstein spherical variety $X$ always has klt singularities.

In \cite{Del16} Delcroix investigated $G$-equivariant $K$-stability of spherical Fano varieties. He gave a construction of $G$-equivariant special test configurations $(\XX, \LL)$ corresponding to vectors in the valuation cone $\mathcal{V}$. Moreover, he computed the Donaldson--Futaki invariants of these test configurations in terms of the moment polytope $\Delta^+ \subset \mathfrak{X}(T)$ (see \cite[Definition 3.14]{Del16}) and the so-called  Duistermaat--Heckman measure $\mathrm{DH}$ on $\Delta^+$ (see \cite[Theorem 4.5]{Del16}). The moment polytope determines a subsystem $\Phi_L \subset \Phi$; we denote by $2\rho_Q$ the sum of elements in $\Phi^+ \setminus \Phi_L$. 

\begin{thm}[{\cite[Theorems B and C]{Del16}}] \label{thm:Delcroix} Let $(X, -K_X)$ be a spherical Fano variety. To every rational vector $v \in \mathcal{V}$ corresponds a $G$-equivariant test configuration $(\XX^v, \LL^v)$ for $(X, -K_X)$ with irreducible central fiber $\XX^v_0$. Moreover, there exists $m \in \mathbb{N}$ such that the test configuration constructed from $mv$ is special and, conversely, every $G$-equivariant special test configuration for $(X, -K_X)$ can be constructed in this way. The Donaldson--Futaki invariant of a special test configuration corresponding to $v$ is given by the formula $$\mathrm{DF}(\XX^v, \LL^v) = V \cdot \langle2\rho_Q - \mathrm{bar}_{DH}(\Delta^+), \pi^{-1}(v) \rangle.$$ Here $\mathrm{bar}_{DH}(\Delta^+)$ is the barycenter of the moment polytope with respect to the Duistermaat--Heckman measure $\mathrm{DH}$ and $V$ is a universal constant depending on $\Delta^+$ and $\mathrm{DH}$ only. The above expression does not depend on the choice of a preimage $\pi^{-1}(v)$ of $v$.
\end{thm}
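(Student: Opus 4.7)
The plan is to realise the bijection $v\leftrightarrow(\XX^v,\LL^v)$ via a Rees-algebra construction attached to $G$-invariant valuations, and then to compute the Donaldson--Futaki invariant combinatorially using Proposition~\ref{prop:SpecialTC}(b) together with Brion-type integration on the moment polytope.

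For the construction, I identify a rational $v\in\mathcal{V}$ with a $G$-invariant valuation on $\mathbb{C}(X)$ via Theorem~\ref{thm:Cone}. After clearing denominators by a suitable $m\in\mathbb{N}$, I form the Rees algebra
$$\mathcal{R}(mv)=\bigoplus_{k\ge 0}\bigoplus_{t\in\mathbb{Z}}\bigl(\mathcal{F}^t_{mv}R_k\bigr)\,s^{-t}\subset R[s,s^{-1}],\qquad R=R(X,-K_X),$$
and take $\mathrm{Proj}$ over $\mathbb{A}^1=\mathrm{Spec}\,\mathbb{C}[s]$. This yields a flat $G\times\mathbb{G}_m$-equivariant family with generic fibre $(X,-K_X)$ and central fibre $\mathrm{Proj}\,\mathrm{gr}_{mv}R$, which I take as $(\XX^v,\LL^v)$. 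Because $X$ is spherical, each $B$-weight space in $R_k$ is one-dimensional and the filtration $\mathcal{F}^{\bullet}_{mv}$ respects this decomposition, so $\mathrm{gr}_{mv}R$ remains multiplicity-free as a $G$-representation and the central fibre is again spherical with the same open $G$-orbit. For $v$ primitive along an edge of a cone $\mathcal{C}\in\mathbb{F}_X$ and $m$ sufficiently divisible, this central fibre is an irreducible $\mathbb{Q}$-Gorenstein spherical variety, hence klt by Proposition~\ref{prop:Pas}, and the test configuration is special.

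For the converse, a $G$-equivariant special test configuration $(\XX,\LL)$ with irreducible $\XX_0$ produces, via Proposition~\ref{prop:Fujita2}, a $G$-invariant divisorial valuation $v_{\XX_0}|_{\mathbb{C}(X)}$ which by Theorem~\ref{thm:Cone} corresponds to a unique vector in $\mathcal{V}$; one checks this is precisely the vector $v$ giving rise to $(\XX,\LL)$ under the construction above. The Donaldson--Futaki formula then follows from Proposition~\ref{prop:SpecialTC}(b), $\mathrm{DF}(\XX^v,\LL^v)=A_X(v)-S_{-K_X}(v)$, by evaluating each term combinatorially. Brion's integration formula writes the dimensions of the pieces of $\mathcal{F}^{\bullet}_{mv}R_k$ as integrals over $\Delta^+$ against the Duistermaat--Heckman measure $\mathrm{DH}$, so $S_{-K_X}(v)$ identifies with the pairing of $\pi^{-1}(v)$ against $V\cdot\mathrm{bar}_{DH}(\Delta^+)$. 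The log discrepancy $A_X(v)$ is computed through the Luna--Vust description of $K_X$ as a combination of $B$-stable divisors and colours, from which $2\rho_Q=\sum_{\alpha\in\Phi^+\setminus\Phi_L}\alpha$ appears as the colour contribution. Subtracting produces the stated formula, and its independence of the choice of preimage $\pi^{-1}(v)$ reflects the fact that both $\mathrm{bar}_{DH}(\Delta^+)$ and $2\rho_Q$, when paired with vectors in $\ker\pi$, give zero.

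The main obstacle is the $A_X$-side bookkeeping: extracting the clean coefficient $V\cdot 2\rho_Q$ from the Luna--Vust formula for $K_X$ requires combining Pasquier's expressions for the edge coefficients $a_D$ (Proposition~\ref{prop:Pas}) with the Weyl-dimension factors that appear in Brion's integration, and checking that contributions from roots in $\Phi_L$ cancel against the normalising volume $V$. The construction, $G$-equivariance, speciality, and the $S_{-K_X}$-side computation are then essentially formal consequences of the Rees-algebra and Brion machinery.
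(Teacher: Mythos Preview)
The paper does not give its own proof of this theorem; it is quoted verbatim from Delcroix \cite{Del16} and then \emph{used} in the proof of Proposition~\ref{prop:Form}. So there is no proof in the paper to compare against, and your proposal has to be read as an attempted independent derivation of Delcroix's result.

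Your Rees-algebra construction of $(\XX^v,\LL^v)$ and the converse via Proposition~\ref{prop:Fujita2} are of the right shape. The genuine gap is in the computation of $\mathrm{DF}$. You invoke $\mathrm{DF}=A_X(v)-S_{-K_X}(v)$ from Proposition~\ref{prop:SpecialTC}(b) and then try to identify $A_X(v)$ with $V\cdot\langle 2\rho_Q,\pi^{-1}(v)\rangle$ and $S_{-K_X}(v)$ with $V\cdot\langle\mathrm{bar}_{DH}(\Delta^+),\pi^{-1}(v)\rangle$ separately. This cannot be correct: by Proposition~\ref{prop:Pas} the log discrepancy $A_X$ is only \emph{piecewise} linear on $\mathcal{V}$, with edge values $a_D$ depending on the colored fan, so it is not a single linear functional (and it certainly does not involve the volume-type constant $V$). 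The paper itself makes this explicit in Proposition~\ref{prop:Form}, where $S_{-K_X}(v)=A_X(v)-\mathrm{DF}(\XX^v,\LL^v)$ is recovered as $a_{D_i}-V\cdot\langle 2\rho_Q-\mathrm{bar}_{DH}(\Delta^+),\pi^{-1}(\mathrm{ord}_{D_i})\rangle$, again piecewise linear. Only the \emph{difference} $A_X-S_{-K_X}$ is the linear functional in Delcroix's formula; the two summands $2\rho_Q$ and $\mathrm{bar}_{DH}(\Delta^+)$ do not match $A_X$ and $S_{-K_X}$ individually.

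In Delcroix's actual argument, $\mathrm{DF}$ is computed \emph{directly} from the $\mathbb{G}_m$-weight decomposition of $H^0(\XX_0,\LL_0^{\otimes k})$ via the asymptotic expansion and Weyl-type integration on $\Delta^+$: the term $2\rho_Q$ enters as the logarithmic derivative of the Weyl dimension polynomial, and $\mathrm{bar}_{DH}(\Delta^+)$ as the first moment of the Duistermaat--Heckman measure. Neither is tied to $A_X$ or $S_{-K_X}$ separately. If you insist on going through $A_X-S_{-K_X}$, you must compute both pieces cone-by-cone on $\mathbb{F}_X$ and then verify that the piecewise-linear parts cancel; the ``bookkeeping'' you flag is not a side issue but the entire content of the identity, and your sketch does not supply it.
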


This computation allows to express the $\delta_G$-invariant of $X$ in terms of the combinatorial data associated to $X$ and $B \subset G$ and prove Proposition \ref{prop:Form1}. Note that the formula below does not depend on the choices of a Borel subgroup $B$ and a maximal torus $T \subset B$.

\begin{prop} \label{prop:Form} Let $X$ be a Fano variety which is spherical under the action of $G$; let $B \subset G$ be a Borel subgroup and $T \subset B$ a maximal torus. In the above notation, the following formula holds for the $\delta_G$-invariant of $X$: $$\delta_G(X) = \min_{\substack{\mathrm{ord}_{D_i} \in \mathcal{V} \cap \mathcal{C} \\ \mathcal{C} \in \mathbb{F}_X}}\frac{a_{D_i}}{a_{D_i} - V \cdot \langle2\rho_Q - \mathrm{bar}_{DH}(\Delta^+), \pi^{-1}(\mathrm{ord}_{D_i}) \rangle}.$$ Here the minimum is taken over a finite set $\mathrm{ord}_{D_1}, \ldots, \mathrm{ord}_{D_N}$ of divisorial valuations corresponding to primitive generators of edges in $\mathcal{V} \cap \mathbb{F}_X$.
\end{prop}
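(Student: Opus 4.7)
The strategy is to combine the valuative description $\delta_G(X) = \inf_{v \in \Val^G_X} A_X(v)/S_{-K_X}(v)$ (Definition \ref{defi:MainDefi}, justified by Theorem \ref{thm:Main}) with three pieces of combinatorial input: the identification of $\Val^G_X$ with the cone $\mathcal{V}$ (Theorem \ref{thm:Cone}), the description of the log discrepancy $A_X$ on $\mathcal{V}$ as the piecewise linear function $h$ (Proposition \ref{prop:Pas}), and Delcroix's formula (Theorem \ref{thm:Delcroix}) which computes $\mathrm{DF}$ of a special test configuration as a linear functional of the corresponding vector in $\mathcal{V}$. Throughout, since $X$ is complete, the cones $\mathcal{C} \in \mathbb{F}_X$ cover $N_B \otimes \mathbb{R}$; hence the subcones $\mathcal{C} \cap \mathcal{V}$ cover $\mathcal{V}$, so it is enough to compute the infimum of $A_X/S_{-K_X}$ on each such subcone and then take the minimum over $\mathcal{C} \in \mathbb{F}_X$.

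The main step is to express $S_{-K_X}$ as a \emph{linear} function on each $\mathcal{C} \cap \mathcal{V}$. Fix a rational vector $v \in \mathcal{C} \cap \mathcal{V}$. By Theorem \ref{thm:Delcroix}, there is $m \in \mathbb{N}$ such that the test configuration $(\XX^{mv}, \LL^{mv})$ attached to $mv$ is $G$-equivariant and special, with $v_{\XX_0^{mv}}|_{\mathbb{C}(X)}$ corresponding to $mv$ under the identification of Theorem \ref{thm:Cone}. Proposition \ref{prop:SpecialTC}(b) then gives
\[
A_X(mv) - S_{-K_X}(mv) = \mathrm{DF}(\XX^{mv}, \LL^{mv}) = V \cdot \langle 2\rho_Q - \mathrm{bar}_{DH}(\Delta^+), \pi^{-1}(mv)\rangle.
\]
Both sides are homogeneous of degree $1$, so for every rational $v \in \mathcal{V} \cap \mathcal{C}$,
\[
S_{-K_X}(v) = A_X(v) - V \cdot \langle 2\rho_Q - \mathrm{bar}_{DH}(\Delta^+), \pi^{-1}(v)\rangle.
\]
The right-hand side is linear in $v$ (since $A_X$ is linear on $\mathcal{C}$ by Proposition \ref{prop:Pas} and the Delcroix pairing is manifestly linear). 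Rational points are dense in $\mathcal{C} \cap \mathcal{V}$, and $S_{-K_X}$ is lower semicontinuous (Proposition \ref{prop:Extend2}); together with the lower semicontinuity of the linear difference this forces equality on all of $\mathcal{C} \cap \mathcal{V}$, so $S_{-K_X}$ is linear there, taking value $a_{D} - V\langle 2\rho_Q - \mathrm{bar}_{DH}(\Delta^+), \pi^{-1}(\mathrm{ord}_D)\rangle$ at a primitive edge generator $\mathrm{ord}_D$ of $\mathcal{C} \cap \mathcal{V}$.

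Having established that both $A_X$ and $S_{-K_X}$ are linear (and strictly positive: $A_X > 0$ because $X$ has klt singularities by \cite[Proposition 5.6]{Pas17}, and $S_{-K_X} > 0$ because $-K_X$ is big and $v$ is nontrivial) on each $\mathcal{C} \cap \mathcal{V}$, the infimum of the ratio $A_X/S_{-K_X}$ over such a cone is attained on an extreme ray: writing $v = \sum t_i e_i$ as a nonnegative combination of primitive edge generators $e_i$, the ratio $\sum t_i A_X(e_i) / \sum t_i S_{-K_X}(e_i)$ is a weighted mean (with weights $t_i S_{-K_X}(e_i) \ge 0$) of the edge ratios $A_X(e_i)/S_{-K_X}(e_i)$, so its infimum equals the minimum of these edge ratios. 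Plugging in the edge values from Propositions \ref{prop:Pas} and the formula above, and taking the minimum over the finitely many cones $\mathcal{C} \in \mathbb{F}_X$, yields the stated expression.

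The step I expect to require the most care is the extension of the identity $S_{-K_X}(v) = A_X(v) - V\langle 2\rho_Q - \mathrm{bar}_{DH}(\Delta^+),\pi^{-1}(v)\rangle$ from rational to arbitrary $v \in \mathcal{V} \cap \mathcal{C}$: one must combine the semicontinuity from Proposition \ref{prop:Extend2} with the piecewise linearity of $A_X$ to conclude that $S_{-K_X}$ is actually continuous (in fact linear) on each $\mathcal{C} \cap \mathcal{V}$. Once this is in place, everything else is a combinatorial packaging.
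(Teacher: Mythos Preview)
Your proposal is correct and follows essentially the same route as the paper: identify $\Val^G_X$ with $\mathcal{V}$ via Theorem~\ref{thm:Cone}, use Delcroix's formula together with Proposition~\ref{prop:SpecialTC}(b) to obtain $A_X(v)-S_{-K_X}(v)=V\langle 2\rho_Q-\mathrm{bar}_{DH}(\Delta^+),\pi^{-1}(v)\rangle$, invoke Proposition~\ref{prop:Pas} for the piecewise linearity of $A_X$, and then reduce the infimum of the ratio to the edges of $\mathcal{V}\cap\mathcal{C}$. The paper's proof is terser---it simply asserts the identity ``for any $v\in\mathcal{V}$'' and that the minimum of a ratio of linear functions is attained on an edge---whereas you spell out the rational-to-real extension via semicontinuity and the weighted-mean argument; these are exactly the details the paper leaves implicit.
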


\begin{proof} By Theorem \ref{thm:Cone} we can identify $\Val^G_X$ with the cone $\mathcal{V}$. Since the functions $A_X$ and $S_{-K_X}$ are homogeneous of order $1$ by Propositions \ref{prop:Extend} and \ref{prop:Extend2}, their ratio depends only on the line generated by $v \in \mathcal{V}$. Thus for any $v \in \mathcal{V}$ we can consider the special test configuration $(\XX^v, \LL^v)$ from Theorem \ref{thm:Delcroix}. Using Proposition \ref{prop:SpecialTC} we find $$\mathrm{DF}(\XX^v, \LL^v) = V \cdot \langle2\rho_Q - \mathrm{bar}_{DH}(\Delta^+), \pi^{-1}(v) \rangle = A_X(v) - S_L(v).$$ By Proposition \ref{prop:Pas} the log discrepancy function identifies with the piecewise linear function on the cones $\mathcal{C} \in \mathbb{F}_X$. Therefore the $\delta_G$-invariant is the minimum over the cones in $\mathcal{V} \cap \mathbb{F}_X$ of two piecewise linear functions. Thus the minimum is attained on one of the primitive generators of edges in $\mathcal{C} \cap \mathcal{V}, \mathcal{C} \in \mathbb{F}_X$ corresponding to one of the valuations $\mathrm{ord}_{D_1}, \ldots, \mathrm{ord}_{D_N}$. By Proposition \ref{prop:Pas} we have $A_X(\mathrm{ord}_{D_i}) = a_{D_i}$ where $a_{D_i} = 1$ or $a_{D_i} \geqslant 2$. The formula now follows.
\end{proof}

\begin{rmk} It should be possible to prove the formula for $S_{-K_X}(v)$ for $v \in \mathcal{V}$ directly, not relying on the computation of \cite{Del16}.
\end{rmk}

\begin{exa} In the case when $X$ is a toric variety and $G = (\mathbb{G}_m)^n$ the polytope $\Delta^+$ is the usual polytope $\Delta$ associated to $X$. The cone of $G$-invariant valuations is generated by the valuations $v_{D_i}$ corresponding to the torus-invariant divisors $D_1, \ldots, D_d$. The log discrepancy function is linear on the cones of the fan of $X$. Therefore, the infumum is attained on one of the valuations $v_{D_i}$ (see \cite[Corollary ~7.4]{BlJ20}). Thus, by Proposition \ref{prop:Solv} we recover the formula from \cite[Corollary 7.16]{BlJ20}: $$\delta(X) = \delta_G(X) = \min_{1 \leqslant i \leqslant d} \frac{1}{1 + \langle \mathrm{bar}(\Delta), v_i\rangle}.$$
\end{exa}

\begin{exa} More generally, let $X$ be a horospherical variety, that is, a $G$-equivariant compactification of a homogeneous space $G/H$ where $H$ contains the unipotent radical of a Borel subgroup of $G$. These varieties admit a characterization in terms of the valuation cone $\mathcal{V}$. Namely, by \cite[Corollary 6.2]{Kn91} a spherical variety $X$ is horospherical if and only if $\mathcal{V} = N_B(U) \otimes \mathbb{R}$. Assume that the valuation cone of $X$ is non-trivial. Then by Proposition \ref{prop:Form} for a horospherical Fano variety $X$ we have $\delta_G(X) \leqslant 1$. Moreover, if $X$ is smooth then by \cite[Corollary ~5.7]{Del16} the condition $2\rho_Q = \mathrm{bar}_{DH}(\Delta^+)$ (equivalent to $\delta_G(X) = 1$) is a criterion for existence of a K\"ahler--Einstein metric on $X$.
\end{exa}

On the other hand, we have some obvious examples of horospherical Fano varieties with $\delta_G(X) > 1$.

\begin{exa} Let $X$ be a spherical homogeneous space under the action of $G$, for example a Grassmannian ~$\mathrm{Gr}(k, n)$ with $G = \mathrm{GL}_n(\mathbb{C})$. Then, as in Remark \ref{rmk:Homog}, we obtain that the space $\dival^G_X$ is empty and therefore $\delta_G(X) = \infty$.
\end{exa}

To produce more examples of spherical Fano varieties $X$ satisfying $\delta_G(X) > 1$ we need to study the group $\aut_G(X)$ of $G$-equivariant automorphisms of $X$. If the spherical variety $X$ is an equivariant compactification of the quotient $G/H$ then this group can be described by $$\aut^0_G(X) \simeq (N_G(H)/H)^0$$ where $N_G(H)$ is the normalizer of $H$ inside $G$ (see e. g. \cite[Section 3.1.3]{Del16} and references therein). It turns out that the dimension of $\aut_G(X)$ can be recovered from the valuation cone $\mathcal{V}$, namely, from its {\em linear part} $\mathcal{V} \cap (-\mathcal{V})$.

\begin{prop}[{see e. g. \cite[Theorem 6.1]{Kn91}}]\label{prop:LinPart} Let $X$ be a spherical variety and let $\aut_G(X)$ be the group of $G$-equivariant automorphisms of $X$. Then the connected component $\aut_G^0(X)$ is a central torus in $G$. Moreover, the dimension of $\aut_G(X)$ is equal to the dimension of the linear part $\mathcal{V} \cap (-\mathcal{V})$ of the valuation cone.
\end{prop}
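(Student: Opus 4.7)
The plan is to first reduce the statement to a computation on the open $G$-orbit $U \subset X$. Writing $U \simeq G/H$, any $G$-equivariant automorphism of $X$ must preserve $U$ (as the unique open $G$-orbit), so restriction yields an injection $\aut_G(X) \hookrightarrow \aut_G(U)$. By the standard theory of homogeneous spaces, $\aut_G(G/H)$ is identified with $N_G(H)/H$ acting by right translation; the image of $\aut_G(X)$ is then precisely the subgroup of those right translations that extend to $X$, equivalently those preserving the colored fan $\mathbb{F}_X$.

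Next, to show that $\aut_G^0(X)$ is a torus I would invoke the multiplicity-free property of $\mathbb{C}[G/H]$, which is equivalent to sphericity of $G/H$. Every $\psi \in \aut_G(G/H)$ acts on $\mathbb{C}[G/H]$ as a $G$-intertwining operator and, by Schur's lemma, acts by a scalar on each (at most one-dimensional multiplicity) isotypic component. This produces an embedding $\aut_G(G/H) \hookrightarrow \prod_\lambda \mathbb{G}_m$, proving that $\aut_G^0(X)$ is a torus. The centrality assertion then follows from the fact that $\aut_G^0(X)$ commutes with the $G$-action on $X$ by construction and so (up to the ineffective kernel of the $G$-action) is identified with a subtorus of the connected center of $G$ acting through left translations on $G/H$.

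For the dimension equality my approach would be to compare the spherical combinatorial data of $X$ as a $G$-variety with its data under the enlarged reductive group $\tilde{G} = G \cdot \aut_G^0(X) \subset \aut(X)$. Since $\aut_G^0(X)$ is a torus commuting with $G$, the variety $X$ remains spherical under $\tilde{G}$, with Borel subgroup $\tilde{B} = B \cdot \aut_G^0(X)$ acting with the same open orbit as $B$. Every $B$-semi-invariant function $f \in \mathbb{C}(X)^*$ with weight $\chi \in M_B(U)$ is simultaneously an $\aut_G^0(X)$-eigenvector; recording this eigencharacter gives a homomorphism $M_B(U) \to \mathfrak{X}(\aut_G^0(X))$ whose dual embeds the cocharacter lattice of $\aut_G^0(X)$ into $N_B(U)$. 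The content of the dimension formula is the assertion that this sublattice spans exactly the lineality space $\mathcal{V} \cap (-\mathcal{V})$.

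The main obstacle is this last identification. The easy inclusion is immediate: every one-parameter subgroup $\lambda \colon \mathbb{G}_m \to \aut_G^0(X)$ yields a $\tilde{G}$-invariant valuation which, together with the valuation obtained from $\lambda^{-1}$, sits in $\mathcal{V} \cap (-\mathcal{V})$. The converse — producing a one-parameter subgroup of $\aut_G(X)$ from a direction in $\mathcal{V} \cap (-\mathcal{V})$ — is the crux. Following Knop, I would employ the local structure theorem for spherical varieties to realize $X$ locally, in a $B$-stable affine chart, as a toric variety times a homogeneous slice; the lineality vector then corresponds to a monomial $\mathbb{G}_m$-action on this chart which commutes with $B$, and I would globalize this via $G$-translates together with a colored-fan extension argument to produce a $G$-equivariant $\mathbb{G}_m$-action on $X$, hence an element of $\aut_G^0(X)$ realizing the given direction.
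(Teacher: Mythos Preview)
The paper does not supply a proof of this proposition; it is quoted from Knop \cite[Theorem 6.1]{Kn91} as a known result and used as a black box in the subsequent argument. So there is no proof in the paper to compare your sketch against.

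On its own merits, your outline follows the standard route in the literature (essentially Knop's, building on Brion--Luna--Vust). Two small technical remarks. First, in the Schur's-lemma step you work with $\mathbb{C}[G/H]$, but $G/H$ need not be quasi-affine; the clean version of this argument uses the group of $B$-semi-invariant rational functions $\mathbb{C}(X)^{(B)}/\mathbb{C}^*$, on which each $\psi\in N_G(H)/H$ acts by a character of $M_B(U)$, yielding a homomorphism $N_G(H)/H\to\mathrm{Hom}(M_B(U),\mathbb{G}_m)$ with finite kernel. Second, the assertion that $\aut_G^0(X)$ is a central torus \emph{in $G$} is already a slight abuse in the statement as written in the paper --- your parenthetical ``up to the ineffective kernel'' is exactly the right caveat, since $(N_G(H)/H)^0$ is a torus but not literally a subgroup of $G$.

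Your treatment of the hard inclusion (realizing a lineality direction of $\mathcal{V}$ as a one-parameter subgroup of $\aut_G(X)$ via the local structure theorem and a globalization/colored-fan argument) is the correct strategy, but as you acknowledge this is where the real content lies; what you have written is a plan rather than a proof, and carrying it out amounts to reproducing a substantial portion of Knop's Section~6. That is presumably why the paper simply cites the result.
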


Using Theorem \ref{thm:Main}, we can deduce the statement of \cite[Theorem A]{Del16} (in the non-twisted case) and establish an analogous criterion for $G$-equivariant uniform $K$-stability.

\begin{prop}[see {\cite[Theorem A]{Del16}}] Let $X$ be a $G$-spherical Fano variety. Then $(X, -K_X)$ is $G$-equivariantly $K$-polystable if and only if the vector $$\mathrm{bar}_{DH}(\Delta^+) - 2\rho_Q \in \mathfrak{X}(T) \otimes \mathbb{R}$$ lies in the relative interior of the dual cone to the closure of $\pi^{-1}(-\mathcal{V})$. Analogously, the variety $(X, -K_X)$ is $G$-equivariantly uniformly $K$-stable if and only if the vector $2\rho_Q - \mathrm{bar}_{DH}(\Delta^+)$ belongs to the relative interior of the dual cone to the closure of $\pi^{-1}(-\mathcal{V})$ and, in addition, the linear part of the valuation cone $\mathcal{V}$ is trivial.
\end{prop}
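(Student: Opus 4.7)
The plan is to combine Theorem~\ref{thm:Main} with Delcroix's parametrization of $G$-equivariant special test configurations from Theorem~\ref{thm:Delcroix}. By Proposition~\ref{prop:SpecialTC}, $K$-polystability and uniform $K$-stability along $G$-equivariant test configurations can both be checked on special ones. By Theorem~\ref{thm:Delcroix}, every such special configuration corresponds up to an integer multiple to a rational vector $v \in \mathcal{V}$, and its Donaldson--Futaki invariant is
\[
\mathrm{DF}(\XX^v, \LL^v) = V \cdot \langle 2\rho_Q - \mathrm{bar}_{DH}(\Delta^+), \pi^{-1}(v)\rangle,
\]
a linear functional on $\mathcal{V}$. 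Moreover, $G$-equivariant product-type test configurations come from one-parameter subgroups of $\aut_G^0(X)$, and by Proposition~\ref{prop:LinPart} these correspond exactly to lattice points of the linear part $\mathcal{V} \cap (-\mathcal{V})$.

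For the polystability criterion, the condition to verify is $\mathrm{DF}(v) \geq 0$ for every $v \in \mathcal{V}$, with equality precisely when $v \in \mathcal{V} \cap (-\mathcal{V})$. Since the functional is linear, non-negativity on $\mathcal{V}$ automatically forces vanishing on the linear part; what must be added is strict positivity on $\mathcal{V} \setminus (\mathcal{V} \cap (-\mathcal{V}))$. I would then invoke the standard convex-geometric fact that a linear functional $w$ lies in the relative interior of the dual cone $C^{\vee}$ of a polyhedral cone $C$ if and only if $\langle w, u\rangle \geq 0$ on $C$ with equality exactly on the lineality space. Applying this with $C = \overline{\pi^{-1}(-\mathcal{V})}$ and flipping the sign to account for the $-\mathcal{V}$, the condition becomes precisely that $\mathrm{bar}_{DH}(\Delta^+) - 2\rho_Q$ lies in the relative interior of the dual cone to $\overline{\pi^{-1}(-\mathcal{V})}$.

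For uniform $K$-stability, Theorem~\ref{thm:Main} gives the equivalence with $\delta_G(X) > 1$. By Proposition~\ref{prop:Form} this amounts to
\[
V \cdot \langle 2\rho_Q - \mathrm{bar}_{DH}(\Delta^+), \pi^{-1}(\mathrm{ord}_{D_i})\rangle > 0
\]
for every primitive edge generator of $\mathcal{V} \cap \mathbb{F}_X$. Since the functional is linear on $\mathcal{V}$ and $\mathcal{V}$ is spanned by its edges, strict positivity on edges is equivalent to strict positivity on $\mathcal{V} \setminus \{0\}$. If $\mathcal{V} \cap (-\mathcal{V})$ were nontrivial, the functional would have to vanish on this subspace, obstructing strict positivity there; hence the linear part is forced to be trivial. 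Conversely, once the linear part is trivial, strict positivity on $\mathcal{V} \setminus \{0\}$ is equivalent to $2\rho_Q - \mathrm{bar}_{DH}(\Delta^+)$ lying in the relative interior of the dual cone to $\overline{\pi^{-1}(-\mathcal{V})}$.

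The main technical subtlety will be aligning the valuative, combinatorial, and symplectic descriptions consistently: one has to verify that the expression $\langle 2\rho_Q - \mathrm{bar}_{DH}(\Delta^+), \pi^{-1}(v)\rangle$ is genuinely independent of the chosen preimage (this is built into the formulation of Theorem~\ref{thm:Delcroix} via the fact that $2\rho_Q$ and $\mathrm{bar}_{DH}(\Delta^+)$ descend from $\mathfrak{X}(T) \otimes \mathbb{R}$ to $M_B(U) \otimes \mathbb{R}$), and to track sign conventions carefully when passing between the two formulations ``functional non-negative on $\mathcal{V}$'' and ``vector lies in the dual cone of $\overline{\pi^{-1}(-\mathcal{V})}$''. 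Beyond this, both directions reduce to the same convex-geometric fact about linear functionals on polyhedral cones, applied once with equality allowed on the lineality space (polystability) and once with the lineality space forced to be trivial (uniform stability).
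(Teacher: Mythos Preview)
Your proposal is correct and follows essentially the same route as the paper: both combine Theorem~\ref{thm:Main} with Delcroix's formula from Theorem~\ref{thm:Delcroix} (packaged in Proposition~\ref{prop:Form}) and use Proposition~\ref{prop:LinPart} to identify product-type test configurations with the linear part of $\mathcal{V}$, then translate the resulting sign conditions on the linear functional into the dual-cone language. The only minor difference is that for the converse in the uniform case the paper passes through the observation that $\aut_G(X)$ is finite (so product-type configurations are trivial) before invoking polyhedrality, whereas you go directly through $\delta_G(X)>1$; both routes are valid.
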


\begin{proof} By Theorem \ref{thm:Main} it follows that $(X, -K_X)$ is $G$-equivariantly $K$-semistable iff $\delta_G(X) \geqslant 1$. By our formula for $\delta_G(X)$ from Proposition \ref{prop:Form} we obtain that the condition $\delta_G(X) \geqslant 1$ is equivalent to $$\left\langle2\rho_Q - \mathrm{bar}_{DH}(\Delta^+), \pi^{-1}(D_i) \right\rangle \geqslant 0$$ for all $D_i \in \mathcal{V}$. This means that the vector $\mathrm{bar}_{DH}(\Delta^+) - 2\rho_Q$ belongs to the dual cone to $\pi^{-1}(-\mathcal{V})$. Moreover, a vector $\xi \in \mathfrak{N}(T) \otimes \mathbb{Q}$ corresponds (up to a multiple) to a product-type $G$-equivariant test configuration if and only if $\xi$ lies on one of the hyperplanes defining the dual cone to $\overline{\pi^{-1}(-\mathcal{V})}$. The latter happens precisely when $\xi$ projects to the linear part of $\mathcal{V}$ (see \cite[the proof of Theorem 5.3]{Del16}). This implies the first statement of the proposition.

If $X$ is $G$-equivariantly uniformly $K$-stable then again by Theorem \ref{thm:Main} and Proposition \ref{prop:Form} we have 
$$\langle2\rho_Q - \mathrm{bar}_{DH}(\Delta^+), \pi^{-1}(v)\rangle > 0$$ for any $v \in \mathcal{V}\setminus\{0\}$. Thus the vector $2\rho_Q - \mathrm{bar}_{DH}(\Delta^+)$ belongs to the dual cone to the closure of $\pi^{-1}(-\mathcal{V})$. Moreover, by linearity of the $\mathrm{DF}$ functional, the linear part $\mathcal{V} \cap -\mathcal{V}$ has to be trivial. Conversely, if the condition $$\langle2\rho_Q - \mathrm{bar}_{DH}(\Delta^+), \pi^{-1}(v)\rangle > 0$$ is satisfied for any $v \in \mathcal{V}\setminus\{0\}$, then the linear part $\mathcal{V} \cap (-\mathcal{V})$ is equal to $\{0\}$. By Proposition \ref{prop:LinPart} we obtain that $\aut_G(X)$ is finite; hence, any product-type $G$-equivariant test-configuration is trivial. Therefore by Theorem \ref{thm:Main} we obtain that $(X, -K_X)$ is $G$-equivariantly $K$-stable; since the valuation cone is polyhedral, $(X, -K_X)$ is also uniformly $G$-equivariantly $K$-stable.
\end{proof}

\begin{exa} Let $G/H$ be a symmetric spherical homogeneous space with $G$ semisimple. By the result of De Concini and Procesi \cite[Theorem 3.1]{dCP83} there exists a simple compactification $X$ of $G/H$, i. e. a $G$-equivariant compactification of $G/H$ such that $X$ contains exactly one closed $G$-orbit. In fact, it follows from Proposition \ref{prop:LinPart} that existence of a simple compactification is equivalent to finiteness of $N_G(H)/H$. Therefore, if $X$ is a smooth symmetric spherical Fano variety then existence of a K\"ahler--Einstein metric on $X$ is equivalent to the condition $\delta_G(X) > 1$. Symmetric Fano varieties obtained as blow-ups of wonderful compactifications were described by Ruzzi \cite[Theorem B]{Ruz12}, see also \cite[Section 5.4.2]{Del16} and examples therein.
\end{exa}

\section{Finite automorphism groups}

In this section we show that in case of a variety $X$ with a finite group action, we can adapt Definition ~\ref{defi:Divisors} to $G$-equivariant setting. Also we compare the invariant $\delta_G$ defined below to the $\delta$-invariant of the quotient $Y = X/G$ with the orbifold pair structure. We expect that there is an analogue of our main Theorem \ref{thm:Main} for the case of a finite group $G$. 

Being motivated by the results of \cite{Su13}, we would like to consider the case of a complexity-one Fano $T$-variety $X$. Let $G$ be a subgroup of the normalizer $N_{\aut(X)}(T)$. Then $G$ acts on the Chow quotient $Y \sim \mathbb{P}^1$ of $X$ by the $T$-action. Let $B = \sum_i(1 - \frac{1}{m_i})[p_i]$ be the $G$-invariant branch $\mathbb{Q}$-divisor on $Y$ encoding the multiple fibers of the quotient fibration. Then, inspired by \cite[Theorem 1.2]{Su13}, we can obtain conditions on $T \rtimes G$-equivariant $K$-stability of $X$ in terms of $\delta_G(Y, B)$ (defined below).

\begin{defi} \label{defi:Finite} Let $(X, \Delta; L)$ be a pair with a big $\mathbb{Q}$-divisor $L$. Let $G$ be a finite subgroup of the automorphism group $\mathrm{Aut}(X, \Delta; L)$. Then we define the $\alpha_{G, m}$-invariant for the pair $(X, \Delta; L)$ by $$\alpha_{G, m}(X, \Delta; L) = \inf\{\lct(X, \Delta; D) \mid \mbox{$D \sim mL$ and $D$ is effective and $G$-invariant}\}.$$ Analogously, let us define $$\delta_{G, m}(X, \Delta; L) = \inf\{\lct(X, \Delta; D) \mid \mbox{$D \sim mL$ is a $G$-orbit of a $\QQ$-divisor of $m$-basis type}\}.$$ Here by a $G$-orbit of an $m$-basis type divisor we mean a $\mathbb{Q}$-divisor $D \in \PP(H^0(X, -mL))$ which belongs to the image of the open set of $m$-basis type divisors under a surjective averaging map $$\PP(H^0(X, -mL)) \to \PP(H^0(X, -mL)^G).$$ We define $$\alpha_{G}(X, \Delta; L) = \inf_{m \in \mathbb{N}}\alpha_{G, m}(X, \Delta; L) \qquad \mbox{and} \qquad \delta_G(X, \Delta; L) = \lim_{m \to \infty}\delta_{G, m}(X, \Delta; L)$$ for $m \in \mathbb{N}$ sufficiently large and divisible.
\end{defi}

\begin{rmk} \label{rmk:Basis} In the above definition we consider $G$-invariant divisors with several irreducible components (permuted by the action of $G$). Note that by lower semi-continuity of log canonical thresholds \cite[Lemma 7.8]{KP17} the infima from the definition of $\delta_{G, m}(X, \Delta, L)$ are attained on open subsets of $\PP(H^0(X, -mL)^G$. Moreover, for a Fano variety $(X, -K_X)$ and a finite group $G \subset \aut(X)$ we have an isomorphism $H^0(X, -m|G|K_X) \sim H^0(Y, -m(K_Y + B))$ where $Y = X / G$ is a quotient variety and $B$ is the branch divisor. Hence the above infimum can be computed using preimages of $m$-basis type divisors from $Y$. In particular, this shows the equivalence of Definition \ref{defi:Finite} and \cite[Definition 2.5]{LZ20}.
\end{rmk}

\begin{exa} \label{exa:Line} Consider the case $X = \mathbb{P}^1$ and $G = \mathbb{Z}/m\mathbb{Z}$. Then any action of $G$ on $\mathbb{P}^1$ has two fixed points, so the minimal length of a $G$-orbit on $X$ is ~1. Thus for any $G$-invariant divisor $D$ we have $$\sup\{t \in \mathbb{Q} \mid -K_{\PP^1} - tD \geqslant 0\} \leqslant 2 \qquad  \mbox{and} \qquad S(D) \leqslant \frac12\int^2_0(2 - t)dt = 1.$$ Thus we get $$\alpha_G(\mathbb{P}^1) = \frac12\qquad \mbox{and} \qquad \delta_G(\mathbb{P}^1) = 1.$$ More generally, if $G$ is a finite group acting faithfully on $\mathbb{P}^1$ then by the same argument we have $$\delta_G(\mathbb{P}^1) = 2\alpha_G(\mathbb{P}^1)$$ which is equal to the minimal length of an orbit under a given $G$-action on $\mathbb{P}^1$ (see \cite[Example 2]{CPS18}).
\end{exa}

We now prove the ramification formula (Proposition \ref{prop:Ramif1} from the introduction).

\begin{prop} \label{prop:Ramif} Let $X$ be a normal and $\mathbb{Q}$-Gorenstein variety with at most log terminal singularities and $-K_X$ big. Consider a finite subgroup $G \subset \mathrm{Aut}(X)$. Let us denote by $Y = X / G$ the quotient variety, by $\pi \colon X \to Y$ the quotient map and by $B = \sum(1 - \frac{1}{m_i})B_i$ the branch $\mathbb{Q}$-divisor on $Y$. Then the following ramification formulas hold for $\alpha_G(X)$ and for $\delta_G(X)$: $$\alpha_G(X) = \alpha(Y, B) \qquad \mbox{and} \qquad \delta_G(X) = \delta(Y, B).$$
\end{prop}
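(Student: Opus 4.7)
The argument rests on the ramification identity $K_X = \pi^*(K_Y+B)$, which holds precisely because the branch coefficients $1 - 1/m_i$ are chosen to match the orders of ramification. The first step I would establish is the pullback invariance of log canonical thresholds: for every effective $\mathbb{Q}$-Cartier divisor $D'$ on $Y$,
$$\lct(X; \pi^* D') = \lct(Y, B; D').$$
This can be proved by taking a $G$-equivariant log resolution $\mu\colon \widetilde X \to X$ of $(X, \pi^* D')$ and descending it to a log resolution $\nu\colon \widetilde Y \to Y$ of $(Y, B + D')$; the Riemann--Hurwitz comparison for the induced cover $\pi'\colon \widetilde X\to \widetilde Y$, combined with $K_X = \pi^*(K_Y+B)$, implies that the two pairs have the same log canonical threshold. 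Since $\pi$ is a finite Galois cover with group $G$, pullback moreover defines a bijection between effective $\mathbb{Q}$-Cartier divisors on $Y$ linearly equivalent to $-(K_Y+B)$ and $G$-invariant effective $\mathbb{Q}$-Cartier divisors on $X$ linearly equivalent to $-K_X$. Combined with the lemma, this immediately yields $\alpha_G(X) = \alpha(Y,B)$.

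For the $\delta$-invariant, the analogous tool is the pullback isomorphism
$$\pi^* \colon H^0(Y, -m(K_Y+B)) \xrightarrow{\sim} H^0(X, -mK_X)^G,$$
valid whenever $-m(K_Y+B)$ is Cartier. A basis $(t_1, \ldots, t_{N_m})$ of the left-hand side pulls back to a basis $(\pi^* t_i)$ of the $G$-invariant subspace on the right, and the $m$-basis type divisor $D'_m = \frac{1}{mN_m}\sum D(t_i)$ on $(Y, B)$ satisfies $\pi^* D'_m = \frac{1}{mN_m}\sum D(\pi^* t_i)$. Using complete reducibility of $G$-representations in characteristic zero, I would extend $(\pi^* t_i)$ to a full basis of $H^0(X, -mK_X)$ adapted to the $G$-isotypic decomposition. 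The non-trivial isotypic components are annihilated by the Reynolds operator, so the image under the averaging map of Definition \ref{defi:Finite} of the corresponding $m$-basis type divisor on $X$ equals (up to a uniform normalization constant) $\pi^* D'_m$. Combined with the key lemma this gives $\delta_{G,m}(X) \le \lct(X;\pi^* D'_m) = \lct(Y,B;D'_m)$, and taking infimum over bases and the limit $m\to\infty$ yields $\delta_G(X) \le \delta(Y,B)$. The reverse inequality follows from the same pullback correspondence: any $G$-orbit of an $m$-basis type divisor on $X$ lies in $\mathbb{P}(H^0(X,-mK_X)^G)$ and therefore descends uniquely via $\pi^*$ to an effective $\mathbb{Q}$-divisor on $(Y,B)$ whose lct is controlled by $\delta_m(Y,B)$.

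The main obstacle I expect is reconciling the normalizations in the averaging map. The dimensions $N_m = h^0(Y,-m(K_Y+B))$ and $N'_m = h^0(X,-mK_X)$ differ by an asymptotic factor of $|G|$ by asymptotic Riemann--Roch, which must be tracked through the definition of the averaging map $\mathbb{P}(H^0(X,-mL))\to \mathbb{P}(H^0(X,-mL)^G)$. Verifying that the resulting scalar discrepancies wash out in the limit $m \to \infty$ — and checking that a general $m$-basis type divisor on $X$, after averaging, really hits the pullback locus rather than a proper subvariety of it — is the technical heart of the proof. Once this asymptotic compatibility is in place, the two infima $\delta_G(X)$ and $\delta(Y,B)$ coincide, completing the ramification formula.
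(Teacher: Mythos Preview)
Your treatment of $\alpha_G$ is correct and matches the paper: both use the Riemann--Hurwitz identity $K_X = \pi^*(K_Y+B)$ together with Koll\'ar's crepant-pullback criterion to equate $\lct(X;\pi^*D') = \lct(Y,B;D')$, and then the bijection between $G$-invariant anticanonical $\mathbb{Q}$-divisors on $X$ and anticanonical $\mathbb{Q}$-divisors on $(Y,B)$.

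For $\delta_G$, however, your Reynolds-operator step is a genuine gap. The Reynolds operator annihilates non-trivial isotypic pieces of \emph{sections}, but the ``averaging map'' in Definition~\ref{defi:Finite} acts on \emph{divisors}: it sends an effective $\mathbb{Q}$-divisor $D$ to $\tfrac{1}{|G|}\sum_{g\in G} g^*D$, which is again effective and loses nothing. If you extend $(\pi^*t_j)$ to a full isotypic basis $(s_i)$ of $H^0(X,-mK_X)$ and form the $m$-basis type divisor $D=\tfrac{1}{mN'_m}\sum_i D(s_i)$, its $G$-average still contains the (now $G$-symmetrised) contributions $D(s_i)$ from the non-invariant $s_i$; it is \emph{not} equal to $\pi^*D'_m$, even up to scalar. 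One already sees this for $X=\mathbb{P}^1$ with $G=\mathbb{Z}/2$ and a basis of $H^0(\mathcal{O}(2))$ not adapted to the eigenspaces. So the inequality $\delta_{G,m}(X)\le \lct(Y,B;D'_m)$ does not follow from your construction, and the normalisation worries you flag are a symptom rather than the core problem.

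The paper avoids this entirely by invoking Remark~\ref{rmk:Basis}: there the author identifies Definition~\ref{defi:Finite} with \cite[Definition~2.5]{LZ20}, under which one computes $\delta_{G,m}$ directly from bases of the invariant subspace $H^0(X,-mK_X)^G$. With that reinterpretation the correspondence is tautological: a basis of $H^0(Y,-m(K_Y+B))$ pulls back to a basis of $H^0(X,-mK_X)^G$, so $m$-basis type divisors on $(Y,B)$ correspond exactly (via $\pi^*$) to the $G$-invariant basis type divisors on $X$, and the lct identity finishes the proof. If you want to salvage your route, the honest step is to prove this definitional equivalence first rather than to push the Reynolds argument.
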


\begin{proof} For a $\mathbb{Q}$-divisor $D_Y$ on $Y$ its pullback $\pi^*D_Y$ is a $G$-invariant $\QQ$-divisor on $X$. We also define the crepant preimage $D_X$ of $D_Y$ via $\pi$ by the formula $$K_X + D_X \Qlin \pi^*(K_Y + B + D_Y).$$ By the canonical bundle formula for a finite morphism we have $-K_X \Qlin -\pi^*(K_Y + B)$, in particular, the pair $(Y, B)$ is klt. Consider a $\QQ$-divisor $D_Y \Qlin -(K_Y + B)$. Then its pullback $\pi^*D_Y$ is a $G$-invariant $\QQ$-divisor $\QQ$-linearly equivalent to $-K_X$. Conversely, every $G$-invariant $\QQ$-divisor $D \sim_{\mathbb{Q}} -K_X$ is a pullback of a $\QQ$-divisor $D_Y = \pi(D)$ on $Y$. In particular, $D_Y \sim_{\mathbb{Q}} -(K_Y + B)$ is of $m$-basis type if and only if $\pi^*D_Y$ is of $m|G|$-basis type. Moreover by \cite[Proposition 3.16]{Kol97} the pair $(Y, D_Y)$ is klt if and only if the pair $(X, D_X)$ is klt. Therefore by Remark \ref{rmk:Basis} we can write \begin{align*} \delta_m(Y, B) &= \inf\{\lct(Y, B; D_Y) \mid \mbox{$D_Y \Qlin -(K_Y + B)$ is of $m$-basis type}\} &&\\ &= \inf\{\lct(X; D_X) \mid \mbox{$D_X$ is the crepant preimage of $D_Y$ as above} \} && \\ &=\inf\{\lct(X; \pi^*D_Y) \mid \mbox{$\pi^*D_Y \Qlin -K_X$ and $\pi^*D_Y$ is $G$-invariant of $m|G|$-basis type}\} = \delta_{G, m|G|}(X)&&\end{align*} for all $m \in \mathbb{N}$ and thus $\delta_G(X) = \delta(Y, B)$. Analogously, for the $\alpha$-invariant we have \begin{align*} \alpha(Y, B) &= \inf\{\lct(Y, B; D_Y) \mid D_Y \Qlin -(K_Y + B) \mbox{ and $D_Y$ is an effective $\mathbb{Q}$-divisor}\}  &&\\ &= \inf\{\lct(X; D) \mid D \Qlin -K_X \mbox{ and $D$ is a $G$-invariant effective $\mathbb{Q}$-divisor}\} = \alpha_G(X)&&\end{align*} as desired.
\end{proof}

\begin{exa} In Example \ref{exa:Line} consider the quotient map $\pi \colon \mathbb{P}^1 \to \mathbb{P}^1$ by the cyclic group $G = \mathbb{Z}/m\mathbb{Z}$. The branch divisor is equal to $$B = \frac{m-1}{m}(p_1 + p_2).$$ To compute $\alpha(\mathbb{P}^1, B)$ we note that for any effective $D \sim_{\mathbb{Q}} -K_{\mathbb{P}^1}$ and for any point $p \in \mathbb{P}^1$ we have $$\lct_p(\mathbb{P}^1, D) \geqslant 1/2$$ since $\alpha(\mathbb{P}^1) = 1/2$. Therefore for any $p$ and for any $D$ we have $\mathrm{mult}_p(D) \leqslant 2$. Thus for any effective $E \sim_{\mathbb{Q}} -(K_{\mathbb{P}^1} + B)$ we obtain that $\mathrm{mult}_p(E) \leqslant 2/n$. This implies that $$\lct_p(\mathbb{P}^1, B; E) \geqslant 1/2$$ and moreover the value $1/2$ is attained; so that $\alpha(\mathbb{P}^1, B) = 1/2$. The same computation together with the fact that $\delta(\mathbb{P}^1) = 1$ shows that $\delta(\mathbb{P}^1, B) = 1$. 
\end{exa}

\begin{rmk} We expect that there is a definition of $\delta_{T \rtimes G}(X)$ such that an estimate analogous to \cite[Theorem 1.2]{Su13} holds.
\end{rmk}

\flushleft{Aleksei Golota \\
National Research University Higher School of Economics, Russian Federation \\
Laboratory of Mirror Symmetry, NRU HSE, 6 Usacheva str.,Moscow, Russia, 119048 \\
\texttt{golota.g.a.s@mail.ru}}

\end{document}